\def\myarabic#1{\normalfont(\roman{#1})}
\newlist{theoremlist}{enumerate}{1}
\setlist[theoremlist]{label=\myarabic{theoremlisti},ref={\myarabic{theoremlisti}},itemindent=0pt,labelindent=0pt,
  leftmargin=*,noitemsep}
\renewcommand{\p@theoremlisti}{\perh@ps{\thetheorem}}
\protected\def\perh@ps#1#2{\textup{#1#2}}
\newcommand{\itemrefperh@ps}[2]{\textup{#2}}
\newcommand{\itemref}[1]{\begingroup\let\perh@ps\itemrefperh@ps\ref{#1}\endgroup}
\newtheorem{theorem}{Theorem}[section]
\newtheorem{lemma}[theorem]{Lemma}
\newtheorem{proposition}[theorem]{Proposition}
\newtheorem{corollary}[theorem]{Corollary}
\theoremstyle{definition}
\newtheorem{notation}[theorem]{Notation}
\theoremstyle{definition}
\newtheorem{remark}[theorem]{Remark}
\theoremstyle{definition}
\newtheorem{definition}[theorem]{Definition}
\newtheorem{conjecture}[theorem]{Conjecture}
\theoremstyle{definition}
\theoremstyle{definition}
\newtheorem{example}[theorem]{Example}
\crefname{figure}{Figure}{Figures}
\def\figref#1(#2){Figure~\hyperref[#1]{\ref*{#1}(#2)}}
\def\Acal{\mathcal{A}}\def\Bcal{\mathcal{B}}\def\Dcal{\mathcal{D}}\def\Fcal{\mathcal{F}}\def\Ical{\mathcal{I}}\def\Mcal{\mathcal{M}}\def\Xcal{\mathcal{X}}
\def\pbf{\mathbf{p}}
\def\C{{\mathbb{C}}}
\def\R{{\mathbb{R}}}
\def\Z{{\mathbb{Z}}}
\newcommand\parr[1]{{({#1})}}
\def\<{{\langle}}
\def\>{{\rangle}}
\def\la{{\lambda}}
\def\RP{{\R P}}
\def\Vert{{ \operatorname{Vert}}}
\def\proj{ \operatorname{proj}}
\def\wt{\operatorname{wt}}
\def\RR{{\mathbb R}}
\def\RP{{\RR\mathbb P}}
\def\xrasim{\xrightarrow{\sim}}
\def\Gr{\operatorname{Gr}}
\def\Grtnn{\Gr_{\ge 0}}
\def\Povtp_#1{\Pi_{#1}^{>0}}
\def\Povtnn_#1{\Pi_{#1}^{\geq0}}
\def\BND{\Bcal}
\def\Bound{\BND}%
\def\Boundkn{\BND(k,n)}
\newcounter{todocnt} %
\newcounter{todoex} %
\newcounter{todofigure}
\numberwithin{equation}{section}
\def\Space{\Xcal}
\def\Closure{\overline{\Space}}
\def\bth{{\bm{\theta}}}
\def\btht{{\tilde{\bm{\theta}}}}
\def\bv{{\bm{v}}}
\def\bx{{\mathbf{x}}}
\crefname{figure}{Figure}{Figures}
\def\th{\theta}
\def\tht{{\tilde\theta}}
\def\ggp_#1{\gamma^{\C}_{f',#1}}
\def\match{\Acal}
\def\Ptp_#1{\Pi^{>0}_{#1}}
\def\Ptnn_#1{\Pi^{\geq0}_{#1}}
\def\fb{{\bar f}}
\def\t{u}
\def\Measop{\operatorname{Meas}}
\def\Meas(#1,#2){\Measop_{#1}(#2)}
\def\Measp(#1,#2){\Measop'_{#1}(#2)}
\def\Bkn{\Boundkn}
\def\m#1{b_{#1}^{-}}
\def\p#1{b_{#1}^{+}}
\def\Crit{\operatorname{Crit}}
\def\Ctp{\Crit^{>0}}
\newcommand{\raisemath}[1]{\mathpalette{\raisem@th{#1}}}
\newcommand{\raisem@th}[3]{\raisebox{#1}{$#2#3$}}
\def\Ctnn{\Crit^{\geq0}}
\def\Ctnnkn{\Crit^{\geq0}_{k,n}}
\def\br[#1]{[\![#1]\!]}
\def\brx[#1]{|#1|}
\def\Measd(#1,#2){\mathrm{M\widehat{e\vphantom{i}a}s}_{#1}(#2)}
\def\xrasim{\xrightarrow{\sim}}
\def\t{t}
\def\CCrit{\Crit}
\def\Cio{\CCrit^{\circ}}
\def\v{v}
\def\CioR_#1{\Cio_{#1}(\R)}
\def\fkn{{f_{k,n}}}
\def\conn_#1{c_{#1}}
\def\D_#1{D_{#1}}
\def\THtp{\Theta^{>0}}
\def\Hyp_#1{\Delta_{#1}}
\def\Cl(#1){#1^\boxtimes}
\let\ge\geqslant
\let\geq\geqslant
\let\leq\leqslant
\def\Meascl(#1,#2){{\overline{\Measop}}_{#1}(#2)}
\def\dsh#1{#1^\downarrow}
\def\Rtp{\R_{>0}}
\def\Rtnn{\R_{\geq0}}
\def\Pmid_#1{\Pi^{>0}_{#1,\dsh{#1}}}
\def\Measf{\Measop_f}
\def\paragraph#1{\subsubsection*{#1}}
\def\It{\tilde I}
\def\Icalt{\tilde \Ical}
\def\Gall{\mathcal{G}}
\def\Gred{\Gall_{\operatorname{red}}}
\def\Gkn{G_{k,n}}
\def\crat(#1,#2;#3,#4){(\v_{#1},\v_{#2};\v_{#3},\v_{#4})}
\def\GH_#1{G^{\Mcal}_{#1}}
\def\Ordop{{\mathscr{O}}}
\def\COrdop{{\mathscr{L}}}
\def\Ord{\Ordop}
\def\COrd{\COrdop}
\def\Orda{\Ordop}
\def\Ass{{\mathscr{A}}}
\def\Cyc{{\mathscr{C}}}
\def\Oao{\Ord^\circ}
\def\tube{\tau}
\def\Tubing{\mathbf{T}}
\def\FAss(#1){\mathcal{F}_{\Ass}(#1)}
\def\KAss(#1){\mathcal{K}_{\Ass}(#1)}
\def\KCyc(#1){\mathcal{K}_{\Cyc}(#1)}
\def\FCyc(#1){\mathcal{F}_{\Cyc}(#1)}
\def\Melt{\Mcal}
\def\FM_#1{F_{\Melt}(#1)}
\def\Triples(#1){#1^{\<3\>}}
\def\HAT{\widehat\Tubing}
\def\al_#1{\alpha_{#1}}
\def\PTTx_#1{P_{#1}}
\def\Oo{\Ordop^\circ}
\def\D{\Dcal}
\def\Comp{\operatorname{Comp}}
\def\CHx(#1,#2){\widehat#2[#1]}
\def\CH(#1){\CHx(#1,\Tubing)}
\def\Px[#1]{#1}
\def\Afford(#1){\R^{|#1|-2}}
\def\Closure(#1){\overline{#1}}
\def\coord(#1,#2){z_{#2}(#1)}
\def\coordx(#1,#2){x_{#2}(#1)}
\def\coordxp(#1,#2){x'_{#2}(#1)}
\def\coordxt(#1,#2){\tilde x_{#2}(#1)}
\def\cxi_#1(#2,#3){x^\parr{#1}_{#3}(#2)}
\def\cyi_#1(#2,#3){y^\parr{#1}_{#3}(#2)}
\def\czi_#1(#2,#3){z^\parr{#1}_{#3}(#2)}
\def\coordy(#1,#2){y_{#2}(#1)}
\def\coordyt(#1,#2){\tilde y_{#2}(#1)}
\def\coordypn(#1,#2){y^\parr n_{#2}(#1)}
\def\coordxpn(#1,#2){x^\parr n_{#2}(#1)}
\def\coordz(#1,#2){z_{#2}(#1)}
\def\coordzp(#1,#2){z'_{#2}(#1)}
\def\ATTx(#1){A(#1)}
\def\BTTx(#1){B(#1)}
\def\OFace(#1,#2){\Fcal^\circ_{\Ord}(#1;#2)}
\def\COFace(#1,#2){\Fcal^\circ_{\COrd}(#1;#2)}
\def\T{\Tubing}
\def\t{\tube}
\def\avg{\operatorname{avg}}
\def\tp{\tau_+}
\def\tubes{{\mathbf{B}}}
\def\Tubes{\mathbf{Tubes}}
\def\tm{\t_-}
\def\EQ[#1]{\overline{#1}}
\def\eq[#1]{\overline{#1}}
\def\lp{\prec_P}
\def\leqp{\preceq_P}
\def\lPa{\prec_{\Pa}}
\def\Paf{\Pa_f}
\def\Pafb{\Pa_{\fbi}}
\def\lpaf{\prec_{\Paf}}
\def\lpa{\prec_{\Pa}}
\def\leqPa{\preceq_{\Pa}}
\def\leqpa{\preceq_{\Pa}}
\def\EQTubes(#1){[\Tubes(#1)]}
\def\Px[#1]{#1}
\def\Pax[#1]{#1}
\def\Pafx[#1]{#1}
\def\EQTubes(#1){[\Tubes(#1)]}
\def\Pa{\tilde P}
\def\Paf{\Pa_f}
\def\Pakn{\Pa_{k,n}}
\def\Cyco{\Cyc^\circ}
\def\sinmap{\zeta^\circ}
\def\sinmapb{\zeta}
\def\Neighv{\pch_G(v)}
\def\Neighb{\pch_G(b)}
\def\NeighXb_#1{\pch_{#1}(b)}
\def\Measbop{\overline{\operatorname{Meas}}}
\def\Measbf{\Measbop_f}
\def\Measbkn{\Measbop_{k,n}}
\def\Measkn{\Measop_{k,n}}
\def\RPtnn{\RP_{\ge0}}
\def\RPtp{\RP_{>0}}
\def\lpa{\lPa}
\def\pari{^\parr m}
\def\lpaf{\prec_{\Paf}}
\def\leqpakn{\preceq_{\Pakn}}
\def\mapCD{\phi}
\def\pbf{{\bm p}}
\def\pch{\pbf}
\def\pchn{{\pbf_{(n)}}}
\def\divsum{\xi}
\def\BB{{\mathbf{B}}}
\def\BBmod{{\bar{\mathbf{B}}}}
\def\B{B}
\def\Bmod{{\bar{B}}}
\def\mapDC{\psi}
\def\Gkn{G_{k,n}}
\def\gbth{g_\bth}
\def\gBB{g_\BB}
\def\gb{\bar g}
\def\gbbth{\gb_\bth}
\def\gbBB{\gb_\BB}
\def\ov{\operatorname{ov}}
\def\ovl{\ov_L}
\def\ovr{\ov_R}
\def\A{A}
\def\type(#1){type~{\normalfont(#1)}}
\def\types(#1,#2){types~{\normalfont(#1)} and~{\normalfont(#2)}}
\def\E(#1){\operatorname{E}(#1)}
\def\S(#1){\operatorname{S}(#1)}
\def\NW(#1){\operatorname{NW}(#1)}
\def\VB{V_{\bullet}(\Gkn)}
\def\AL{\ovr(\A,\B)}
\def\COL{\ovl(\A,\B)}
\def\rt{t}
\def\dist{{\operatorname{d}}_\bth}
\def\Rbth{R_{\bth}}
\def\BMX(#1){\Bmod^{(#1)}}
\def\RSZ{\R_{\Sigma=0}}
\def\bx{{\bm x}}
\def\bv{{\bm v}}
\def\by{{\bm y}}
\def\embop{\rho}
\def\res_#1{\embop_{#1}}
\def\ResPa{\tilde\embop}
\def\tbr[#1]{[#1]}
\def\prodb{\bar\prod}
\def\proj{\pi}
\def\psz{\proj_{\Sigma=0}}
\def\avg{\operatorname{avg}}
\def\D_#1{\Comp_{#1}(\Pa)}
\def\DT{\D_{\Tubing}}
\def\Df_#1{\Comp_{#1}(\Paf)}
\def\DfT{\Df_{\Tubing}}
\newcommand{\precdot}{\prec\mathrel{\mkern-3.5mu}\mathrel{\cdot}}
\def\pat{^\parr t}
\def\fbi{f}
\begin{document}
\numberwithin{equation}{section}

\title{Totally nonnegative critical varieties}
\author{Pavel Galashin}
\address{Department of Mathematics, University of California, Los Angeles, CA 90095, USA}
\email{{\href{mailto:galashin@math.ucla.edu}{galashin@math.ucla.edu}}}
\thanks{P.G.\ was supported by an Alfred P. Sloan Research Fellowship and by the National Science Foundation under Grants No.~DMS-1954121 and No.~DMS-2046915.}
\date{\today}

\subjclass[2020]{
  Primary:
  14M15. %
  Secondary:
  52B99, %
  15B48, %
  82B27, %
  05E99. %
}

\keywords{Critical varieties, totally nonnegative Grassmannian, cyclohedron, affine poset, hypersimplex, compactification}

\begin{abstract}
We study totally nonnegative parts of critical varieties in the Grassmannian. We show that each totally nonnegative critical variety $\Ctnn_f$ is the image of an affine poset cyclohedron under a continuous map and use this map to define a boundary stratification of $\Ctnn_f$. For the case of the top-dimensional positroid cell, we show that the totally nonnegative critical variety $\Ctnn_{k,n}$ is homeomorphic to the second hypersimplex $\Delta_{2,n}$.
\end{abstract}

\maketitle

\section*{Introduction}
The \emph{totally nonnegative Grassmannian} $\Grtnn(k,n)$ is a certain subset of the real Grassmannian introduced in~\cite{Pos,Lus2,LusIntro}. Recent years have revealed a variety of surprising connections between the structure of $\Grtnn(k,n)$ and statistical mechanics~\cite{CoWi, Lam, GP}, physics of scattering amplitudes~\cite{AHT,abcgpt},  and soliton solutions to the KP equation~\cite{kodama_williams_14}. In a recent paper~\cite{crit}, we introduced \emph{critical varieties} inside the Grassmannian, which may be considered ``critical parts'' of  \emph{positroid varieties} introduced in~\cite{KLS}. The construction of critical varieties is based on Kenyon's critical dimer model~\cite{Kenyon} and simultaneously includes the embeddings of the critical Ising model and critical electrical networks into $\Grtnn(k,n)$ discovered in~\cite{Lam,GP}.

Our aim in~\cite{crit} was to develop a theory of critical varieties which would parallel the theory of positroid varieties. For example, we introduced complex-algebraic \emph{open critical varieties} $\Cio_f$ as well as their totally positive parts $\Ctp_f$ called \emph{critical cells}. The goal of the present paper is to continue this program and study \emph{totally nonnegative critical varieties} $\Ctnn_f$, defined as closures of critical cells $\Ctp_f$ inside $\Grtnn(k,n)$.

While investigating the structure of the spaces $\Ctnn_f$, we were led to consider several new families of polytopes generalizing \emph{order polytopes}~\cite{Stanley_two}, \emph{associahedra}~\cite{Tamari,Stasheff,Haiman,Lee}, and \emph{cyclohedra}~\cite{BoTa,Simion}. We introduced \emph{poset associahedra} and \emph{affine poset cyclohedra} and explored their properties in~\cite{crit_polyt}. An important result from the point of view of applications to critical varieties is that these polytopes arise as compactifications of certain configuration spaces of points on a line and on a circle, analogously to the cases of associahedra and cyclohedra~\cite{AxSi,Sinha,LTV}. 

The goal of this paper is to prove two results on totally nonnegative critical varieties $\Ctnn_f$. First, we show that each space $\Ctnn_f$ is the image of an affine poset cyclohedron under a surjective continuous map. This observation, which may be considered an analog of the results of~\cite{PSW}, allows us to introduce a boundary stratification of $\Ctnn_f$. (Unlike in the case of positroid cells, the boundary stratification of $\Ctnn_f$ is not merely obtained by intersecting $\Ctnn_f$ with various positroid cells; see \cref{ex:intro:1}.) Next, we concentrate on the special case of the totally nonnegative critical variety $\Ctnn_{k,n}$ corresponding to the \emph{top-dimensional positroid cell} inside $\Grtnn(k,n)$. We show that $\Ctnn_{k,n}$ is homeomorphic to a polytope, namely, to the \emph{second hypersimplex $\Delta_{2,n}$}, via a stratification-preserving homeomorphism. 

As a surprising consequence, we see that $\Ctnn_{k,n}$ does not depend on $k$ as a stratified space. We view this result as a step towards constructing a family of conjectural \emph{shift maps} $\Gr(k,n)\dashrightarrow \Gr(k+1,n)$, which should restrict to homeomorphisms $\Ctnn_{k,n}\xrasim \Ctnn_{k+1,n}$. Constructing such shift maps is of great importance in relation to physics and statistical mechanics. For example, it would yield a connection between electrical networks and the Ising model (see~\cite[Question~9.2]{GP}) as well as provide insight into the construction of the \emph{BCFW triangulation}~\cite{BCFW} of the \emph{amplituhedron}~\cite{AHT}; see~\cite{abcgpt,GL,LPW,PSBW} and~\cite[Section~8]{crit} for context and related results.

Recall that the totally nonnegative parts of positroid varieties, while not being isomorphic to polytopes as stratified spaces, have remarkably simple topological structure~\cite{Wil,PSW, RW08,RW10,GKL,GKL3}. It remains an open problem to determine whether each totally nonnegative critical variety $\Ctnn_f$ is isomorphic to a polytope as a stratified space.

\section{Main results}\label{sec:intro:main}
We give a brief overview of some of our results. The full statements and proofs are given in the main body of the paper.

 Let $G$ be a planar graph embedded in a disk. We assume that $G$ has $n$ black degree $1$ boundary vertices labeled $b_1,b_2,\dots,b_n$ in clockwise order; see \figref{fig:plabic}(a). A \emph{strand} in $G$ is a path that makes a sharp right (resp., left) turn at each black (resp., white) vertex; see \figref{fig:plabic}(b). For each $p\in[n]:=\{1,2,\dots,n\}$, if a strand starts at the boundary vertex $b_p$, it must terminate at some boundary vertex $b_{\fbi_G(p)}$. The resulting permutation $\fbi_G\in S_n$ is called the \emph{strand permutation} of $G$. We say that $G$ is \emph{reduced}~\cite{Pos} if it has the minimal number of faces among all graphs with strand permutation $\fbi_G$.

For $0\leq k\leq n$, the \emph{totally nonnegative Grassmannian $\Grtnn(k,n)$} is the subset of the real Grassmannian $\Gr(k,n)$ where all Pl\"ucker coordinates have the same sign; see \cref{sec:backr:plabic} for further background. To a weight function $\wt:E(G)\to\Rtp$ defined on the edges of $G$, Postnikov~\cite{Pos} associates a point $\Meas(G,\wt)\in\Grtnn(k,n)$, where $0\leq k\leq n$ depends only on $G$.

In order to define a \emph{critical cell} $\Ctp_G$, we restrict to a special family of weight functions coming from the \emph{critical dimer model} of~\cite{Kenyon}. We will always assume that $G$ is reduced, in which case the critical cell $\Ctp_G$ depends only on the strand permutation of $G$ and is denoted $\Ctp_{\fbi_G}$.

For the purposes of this introduction, we consider the most important special case of the \emph{top cell} strand permutation $\fbi_{k,n}$. By definition, $\fbi_{k,n}\in S_n$ sends $p\mapsto p+k$ modulo $n$, for all $p\in[n]$. Let $\THtp_{k,n}$ be the space of $n$-tuples $\bv:=(v_1,v_2,\dots,v_n)\in\C^n$ of distinct points ordered counterclockwise on the unit circle, considered modulo global rotations of the circle. 
\begin{remark}\label{rmk:exp_simplex}
The space $\THtp_{k,n}$ is naturally homeomorphic to the interior of an $(n-1)$-dimensional simplex
\begin{equation*}%
  \THtp_{k,n}\cong\{\bth=(\th_1,\th_2,\dots,\th_n)\in\R^n\mid 0=\th_1<\th_2<\cdots<\th_n<\pi\},
\end{equation*}
by setting $v_r:=\exp(2i\th_r)$ for all $r\in[n]$. (In particular, $\THtp_{k,n}$ does not depend on $k$.)
\end{remark}

\begin{figure}
  \includegraphics[width=0.8\textwidth]{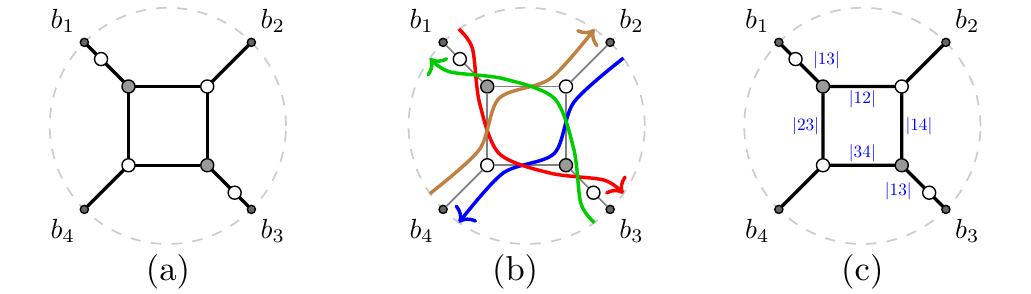}
  \caption{\label{fig:plabic} (a) A (reduced) planar bipartite graph $G$; (b) strands in $G$; (c)~edge weights $\wt_\bv(e)$, where the unmarked edges have weight $1$ and we abbreviate $\brx[pq]:=|v_p-v_q|$. Figure reproduced from~\cite[Figure~1]{crit}.}
\end{figure}

Every edge $e$ of $G$ belongs to exactly two strands. Denoting the endpoints of these strands by $b_p,b_q$ for $p,q\in[n]$, we say that $e$ is \emph{labeled by $\{p,q\}$}. In this case, we define its weight by
\begin{equation}\label{eq:intro:wt_bv}
  \wt_\bv(e):=
  \begin{cases}
    |v_p-v_q|, &\text{if $e$ is not incident to a boundary vertex;}\\
    1, &\text{otherwise.}
  \end{cases}
\end{equation}
We obtain a weight function $\wt_\bv:E(G)\to\Rtp$. See \figref{fig:plabic}(c) for an example. It turns out that the resulting point $\Meas(G,\wt_\bv)\in\Grtnn(k,n)$ does not depend on the choice of $G$. We denote $\Measkn(\bv):=\Meas(G,\wt_\bv)$. The \emph{critical cell} $\Ctp_{k,n}$ is defined as
\begin{equation*}%
  \Ctp_{k,n}:=\{\Measkn(\bv)\mid \bv\in\THtp_{k,n}\}.
\end{equation*}
Throughout, we assume that $2\leq k\leq n-1$. (For $k=1$ or $k=n$, $\Ctp_{k,n}$ is a single point.) According to~\cite[Theorem~1.10]{crit}, the map $\Measkn$ restricts to a homeomorphism $\THtp_{k,n}\xrasim \Ctp_{k,n}$, and thus $\Ctp_{k,n}$ is homeomorphic to the interior of an $(n-1)$-simplex. Our goal is to study the \emph{closure} $\Ctnn_{k,n}$ of $\Ctp_{k,n}$ inside $\Grtnn(k,n)$, and more generally, the closure $\Ctnn_\fbi$ of an arbitrary critical cell $\Ctp_\fbi$, $\fbi\in S_n$.

Informally, since $\Ctp_{k,n}$ is parameterized by configurations of $n$ distinct points on a circle, its closure $\Ctnn_{k,n}$ should be parameterized by $n$-point configurations where some points are allowed to collide. The map $\Measop_G$ is invariant under \emph{gauge transformations}: given a weighted graph $(G,\wt)$, for each interior vertex $u$ of $G$, rescaling the weights of all edges incident to $u$ by the same nonzero scalar does not alter the image of $\wt$ under $\Measop_G$. Modulo gauge transformations, $\Measkn(\bv)$ depends only on the ratios of the distances between the points $v_1,v_2,\dots,v_n$. For instance, even if all points $v_1,v_2,\dots,v_n$ collide together, it could happen that after we apply gauge transformations at the vertices of $G$, in the limit \emph{none} of the edge weights tend to zero, as the following example demonstrates.

\begin{figure}
  \includegraphics[width=0.8\textwidth]{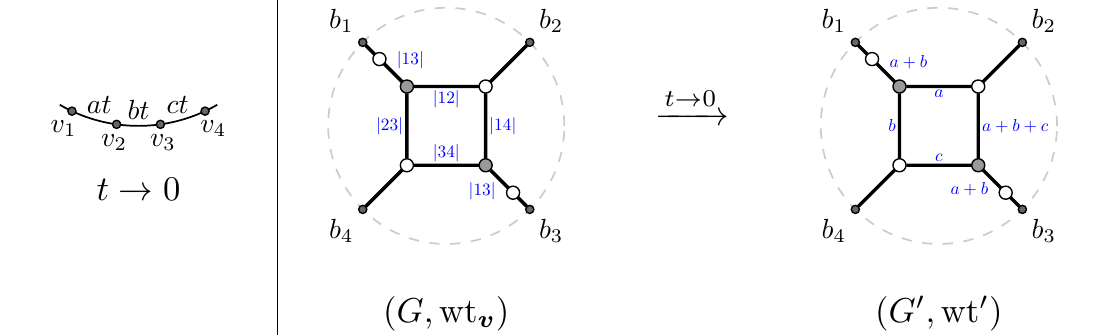} 
  \caption{\label{fig:limit1} Taking a limit where all points in $\bv$ collide. See \cref{ex:intro:1}.}
\end{figure}

\begin{example}\label{ex:intro:1}
Consider the graph $G$ in \cref{fig:plabic}, and suppose that $v_1,v_2,v_3,v_4$ collide in such a way that 
\begin{equation*}%
  (|v_2-v_1|:|v_3-v_2|:|v_3-v_1|:|v_4-v_3|:|v_4-v_2|:|v_4-v_1|)\to(a:b:a+b:c:b+c:a+b+c),
\end{equation*}
for some constants $a,b,c>0$; see \figref{fig:limit1}(left). After applying gauge transformations at the two black interior vertices of $G$ and taking a limit, we obtain a weighted graph $(G',\wt')$ shown in \figref{fig:limit1}(right). The point $\Meas(G',\wt')$ belongs to the \emph{totally positive Grassmannian} (i.e., has all Pl\"ucker coordinates strictly positive). Yet, $\Meas(G',\wt')$ belongs to the \emph{boundary} of $\Ctnn_{2,4}$, i.e., to $\Ctnn_{2,4}\setminus\Ctp_{2,4}$.
\end{example}

A natural compactification of $\THtp_{k,n}$ taking into account the ratios of distances between pairs of colliding points in $\bv$ is the $(n-1)$-dimensional \emph{cyclohedron} $\Cyc_n$ studied in~\cite{BoTa,Simion}. See \cref{sec:aff_pos_cycloh_and_compact} for background.
 The cyclohedron $\Cyc_n$ may be obtained as the \emph{Axelrod--Singer} compactification~\cite{AxSi} of $\THtp_{k,n}$. In particular, the interior of $\Cyc_n$ is identified with $\THtp_{k,n}$.
\begin{theorem}
The map $\Measkn:\THtp_{k,n}\xrasim \Ctp_{k,n}$ extends to a continuous surjective map
\begin{equation*}%
  \Measbkn:\Cyc_n\to \Ctnn_{k,n}.
\end{equation*}
\end{theorem}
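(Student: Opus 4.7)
The plan is to exploit the description of $\Cyc_n$ as the Axelrod--Singer compactification of $\THtp_{k,n}$, so that each boundary point records a nested pattern of point collisions together with the limiting ratios of pairwise distances within each cluster. The central obstacle, illustrated in \cref{ex:intro:1}, is that although individual weights $|v_p - v_q|$ degenerate to $0$ as points collide, the gauge-invariance of $\Measop_G$ allows us to rescale cluster-by-cluster to obtain a sensible limit.

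To construct the extension, I would fix any reduced planar bipartite graph $G$ with strand permutation $\fbi_{k,n}$. Given a boundary point $\bar\bv \in \partial\Cyc_n$ corresponding to a nested family of clusters and any sequence $\bv^{(m)} \in \THtp_{k,n}$ with $\bv^{(m)} \to \bar\bv$, I would assign to each interior vertex $u$ of $G$ a gauge scalar $g_u^{(m)} \in \Rtp$ determined by the smallest cluster containing all boundary endpoints of strands through $u$. The cyclohedron coordinates near $\bar\bv$ ensure that after rescaling, every edge weight $g_u^{(m)} g_{u'}^{(m)} \wt_{\bv^{(m)}}(uu')$ converges to a positive real number, yielding a well-defined limit weight function $\wt_{\bar\bv}$ on $G$. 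Since gauge transformations preserve $\Measop_G$, one sets
\begin{equation*}
  \Measbkn(\bar\bv) := \Meas(G, \wt_{\bar\bv}) \in \Grtnn(k,n),
\end{equation*}
and the existing homeomorphism $\Measkn : \THtp_{k,n} \xrasim \Ctp_{k,n}$ shows that this agrees with $\Measkn$ on the interior.

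The hard part is verifying that $\Measbkn$ is well defined and continuous. Independence of the approaching sequence should follow from the explicit local product structure of $\Cyc_n$ near $\bar\bv$: a neighborhood decomposes as a product of lower-dimensional cyclohedra parameterizing the internal configuration of each cluster together with an overall scaling parameter at each level of the nesting, and the gauge scalars $g_u^{(m)}$ depend continuously on these coordinates. Independence of the choice of $G$ can be reduced to invariance under local square moves and contractions of degree-two vertices, which generate the equivalence relation on reduced planar bipartite graphs with fixed strand permutation and each of which can be checked to preserve the rescaled weights up to a further gauge. Once continuity on all of $\Cyc_n$ is established, surjectivity is immediate: $\Measbkn(\Cyc_n)$ is compact and therefore closed in $\Grtnn(k,n)$, contains the dense subset $\Ctp_{k,n}$ of $\Ctnn_{k,n}$, and is contained in $\Ctnn_{k,n}$ because $\THtp_{k,n}$ is dense in $\Cyc_n$, so the two sets coincide.
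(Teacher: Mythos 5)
Your central claim — that after assigning cluster-based gauge scalars $g_u^{(m)}$, \emph{every} rescaled edge weight $g_u^{(m)} g_{u'}^{(m)} \wt_{\bv^{(m)}}(uu')$ converges to a \emph{positive} real — is false, and this is precisely where the real difficulty lives. Gauge transformations rescale all edges at a vertex by the same scalar, so they cannot change the \emph{relative} weights of the edges incident to a single vertex. Take a degree-$3$ black vertex $b$ whose strand endpoints are $p,q,r$, with $p$ and $q$ colliding but $r$ far away: the edge labeled $\{p,q\}$ has weight $\propto |v_p - v_q| \to 0$ while the edges labeled $\{p,r\}$, $\{q,r\}$ stay bounded below, so the ratio of the first weight to the others tends to $0$ regardless of gauge. \Cref{ex:intro:2} and \cref{fig:limit2} show this happening: the limiting weighted graph $(G',\wt')$ has strictly fewer edges than $G$, because some weights genuinely vanish. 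The paper's construction (\cref{dfn:Measbf}) therefore works with a weight function taking values in $\Rtnn$ rather than $\Rtp$, discards the zero-weight edges to obtain a subgraph $G'$, and then — this is \cref{prop:at_least_one_apm}, the main nontrivial step — proves that $G'$ still admits at least one almost perfect matching, so that $\Meas(G',\wt')$ really defines a point of $\Grtnn(k,n)$. That argument uses the combinatorics of tubings (finding a maximal proper tube $\t'$ compatible with $\T(\bth)$ and using it to build a matching), and nothing in your proposal substitutes for it.

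A secondary, smaller point: your independence-of-$G$ argument via checking square moves and contractions is workable but more labor than needed. The paper sidesteps it entirely (\cref{rmk:independence}): once one has shown $\Measbf$ is a \emph{continuous} extension of $\Measf$ from the dense interior, uniqueness of continuous extensions makes the choice of $G$ immaterial. The compactness argument for surjectivity at the end is the same as the paper's and is fine.
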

\noindent A similar result (\cref{thm:Measf:Cyc_to_Ctnn}) holds for arbitrary critical cells. Here, instead of the cyclohedron, one needs to take an \emph{affine poset cyclohedron} introduced in~\cite{crit_polyt}. For an arbitrary permutation $\fbi\in S_n$, the critical cell $\Ctp_{\fbi}$ is parameterized by a configuration space $\THtp_{\fbi}$ of $n$ points on a circle where some points are allowed to pass through each other. To this data, we associate an \emph{affine poset} $\Pafb$ such that the corresponding affine poset cyclohedron $\Cyc(\Pafb)$ gives a suitable compactification of $\THtp_{\fbi}$. This allows us to extend the boundary measurement map $\Measop_{\fbi}:\THtp_{\fbi}\to\Ctp_{\fbi}$ to a surjective continuous map
\begin{equation*}%
 \Measbf: \Cyc(\Pafb)\to\Ctnn_{\fbi}.
\end{equation*}
 By considering images of different faces of $\Cyc(\Pafb)$, we obtain a stratification of $\Ctnn_{\fbi}$. 

It turns out that the map $\Measbkn$ is far from a homeomorphism. Instead, it has the following remarkable property, which we call \emph{independence of infinitesimal ratios}. 
 Suppose that $\bv\pat\in \Ctp_{k,n}$ is a sequence of point configurations converging to some $\bv\in\Cyc_{n}$ as $t\to0$. Let $d\pat:=\max_{p,q\in[n]} |v\pat_p-v\pat_q|$. It turns out that for all $p,q$ such that $\lim_{t\to0}\frac{|v\pat_p-v\pat_q|}{d\pat}=0$, the limit $\Measbkn(\bv)$ of $\Measkn(\bv\pat)$ does not depend on distance ratios involving $|v\pat_p-v\pat_q|$. This property is surprising since the limiting edge weight function $\wt'$ \emph{does} depend on such distance ratios. However, the resulting limiting graph $G'$ is not reduced in general, and after applying \emph{reduction moves} (see \cref{fig:moves_red}) to it, all such ratios miraculously cancel each other out.

\begin{figure}
  \includegraphics[width=1.0\textwidth]{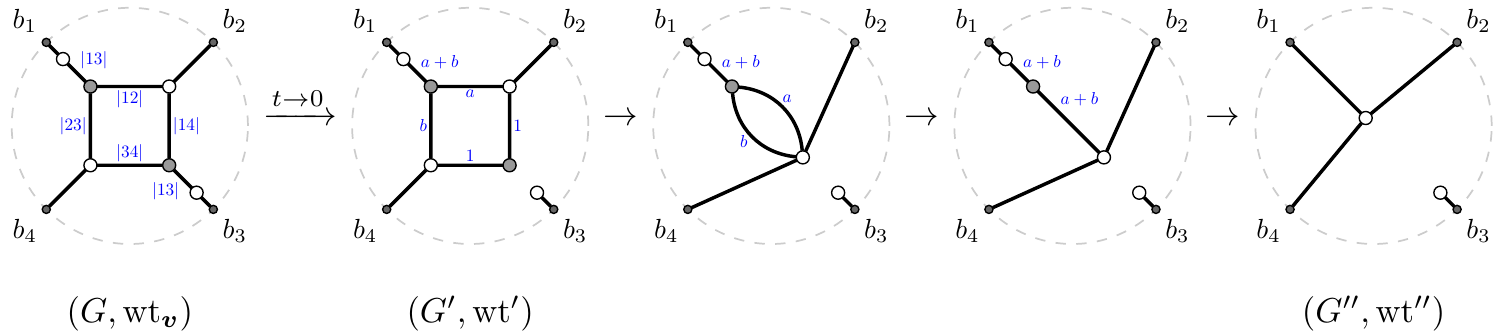}
  \caption{\label{fig:limit2} Taking a limit where the points $v_1,v_2,v_3$ collide but are far from $v_4$. After applying a sequence of reduction moves from \cref{fig:moves_red}, the edge weights involving relative distances between $v_1,v_2,v_3$ cancel out. See \cref{ex:intro:2} and \cref{thm:intro:hyp2n}.}
\end{figure}

\begin{example}\label{ex:intro:2}
Let $G$ be the graph in \cref{fig:plabic}, and suppose that $v_1,v_2,v_3,v_4$ collide so that
\begin{align*}%
  (|v_2-v_1|:|v_3-v_2|:|v_3-v_1|:|v_4-v_3|:|v_4-v_2|:|v_4-v_1|)&\to(0:0:0:1:1:1),\\
  (|v_2-v_1|:|v_3-v_2|:|v_3-v_1|)&\to(a:b:a+b),
\end{align*}
for some constants $a,b>0$. After applying gauge transformations and taking a limit, we obtain a weighted graph $(G',\wt')$ shown in \figref{fig:limit2}(middle left). Thus the edge weights $\wt'$ of $G'$ depend on the ratio $a:b$ in a non-trivial fashion. 
The graph $G'$ is not reduced, and after applying reduction moves to it, we see that all edge weights involving $a$ and $b$ cancel out; see \figref{fig:limit2}(right). Our result (\cref{thm:intro:hyp2n}) claims that this phenomenon occurs more generally for arbitrary $k$ and $n$, and for an arbitrary choice of the limiting ratios of distances between the points in $\bv$.
\end{example}

To explain independence of infinitesimal ratios formally, consider a map 
\begin{equation*}%
  \mapCD: \THtp_{k,n}\to \RP^{n-1},\quad \bv\mapsto (|v_2-v_1|:|v_3-v_2|:\cdots:|v_{n}-v_{n-1}|:|v_1-v_n|).
\end{equation*}
Passing to the closure, $\mapCD$ can be extended to a continuous map $\mapCD: \Cyc_{n}\to \RP^{n-1}.$ 
 The image $\mapCD(\Cyc_n)$ is essentially described by triangle inequalities, and it is straightforward to check (\cref{prop:delta2n}) that it may be identified with the \emph{second hypersimplex}
\begin{equation*}%
  \Delta_{2,n}:=\{(x_1,x_2,\dots,x_n)\in[0,1]^n\mid x_1+x_2+\cdots+x_n=2\}.
\end{equation*} 
\begin{theorem}[Independence of infinitesimal ratios]\label{thm:intro:hyp2n}
  The map $\Measbkn$ factors through the map $\mapCD$. That is, there exists a continuous map
\begin{equation*}%
  \mapDC:\Delta_{2,n}\to\Ctnnkn
\end{equation*}
making the following diagram commutative:
\begin{equation}\label{eq:intro:map_factors}
  \begin{tikzcd}[row sep=large]
\Cyc_n
\arrow[r,twoheadrightarrow,labels=below,"\mapCD"] 
\arrow[rr,twoheadrightarrow,bend left=20,labels=above,"\Measbkn"]
 & \Delta_{2,n} \arrow[r,dashed,"\mapDC",labels=below] & \Ctnnkn.
\end{tikzcd}
\end{equation}
Moreover, the map $\mapDC:\Delta_{2,n}\to\Ctnnkn$ is a stratification-preserving homeomorphism.
\end{theorem}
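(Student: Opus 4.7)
The plan is to establish the theorem in four stages: (i)~prove that $\Measbkn(\bv)$ depends only on $\mapCD(\bv)$, which defines $\mapDC$ uniquely as a set map; (ii)~upgrade $\mapDC$ to a continuous map via a quotient-map argument; (iii)~prove that $\mapDC$ is a continuous bijection, hence a homeomorphism; (iv)~match the face lattice of $\Delta_{2,n}$ with the boundary stratification of $\Ctnnkn$. Stage~(i) is the substantive content---the phenomenon of \emph{independence of infinitesimal ratios} illustrated in \cref{ex:intro:2}.

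For~(i), I would fix a reduced planar bipartite graph $G$ with strand permutation $\pfkn$, a boundary point $\bv \in \Cyc_n$, and an approximating sequence $\bv\pat \in \THtp_{k,n}$ converging to $\bv$. Using the compactification structure of $\Cyc_n\cong\Cyc(\Pakn)$ from~\cite{crit_polyt}, the limit $\bv$ encodes a nested family of ``clusters'' of colliding points, each with its own rate of collapse. For each cluster I would gauge-fix the weights of the subgraph of $G$ associated to that cluster so that the limits of the edge weights $\wt_{\bv\pat}$ are finite and positive. The resulting limiting graph $(G',\wt')$ is typically non-reduced, and I would apply Postnikov's reduction moves (\cref{fig:moves_red}) to reduce it. The crux is to show that all factors involving intra-cluster ratios $|v_p\pat - v_q\pat|/d\pat$ (with the ratio tending to $0$) cancel during reduction, so that the final reduced weighted graph depends only on inter-cluster chord ratios---exactly the content of $\mapCD(\bv)$. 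I would prove this by induction on the depth of the cluster tree, treating each innermost cluster as a smaller critical sub-cell (with strand permutation inherited from $\pfkn$) and invoking the critical cell results of~\cite{crit} for the inductive step.

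Once~(i) holds, step~(ii) is automatic: $\mapCD\colon \Cyc_n \twoheadrightarrow \Delta_{2,n}$ is a continuous surjection from a compact space onto a Hausdorff space, hence a closed quotient map, and continuity of $\Measbkn$ forces continuity of $\mapDC$. For~(iii), surjectivity of $\mapDC$ is inherited from $\Measbkn$. To prove injectivity, I would construct a continuous left inverse $\Ctnnkn \to \Delta_{2,n}$ by expressing the normalized chord lengths $|v_p - v_{p+1}|$ of any preimage configuration $\bv$ as ratios of suitable Pl\"ucker coordinates of $\Measkn(\bv)$; the explicit formulas~\eqref{eq:intro:wt_bv} for the critical edge weights combined with Postnikov's matching-enumeration formula for $\Measop_G$ provide such an expression on the open cell $\Ctpkn$, and continuity extends it to the closure. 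A continuous bijection from the compact $\Delta_{2,n}$ to the Hausdorff $\Ctnnkn$ is a homeomorphism. Stage~(iv) follows by observing that faces of $\Delta_{2,n}$ are indexed by cyclic set partitions of $[n]$ (which blocks of consecutive $x_i$'s vanish together), and the same partitions index the collision strata produced by the surjection $\Measbf\colon\Cyc(\Pakn) \twoheadrightarrow \Ctnnkn$, with $\mapCD$ contracting precisely the intra-cluster directions by step~(i).

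The principal obstacle is the cancellation step in~(i). A general reduced graph $G$ interacts nontrivially with a nested cluster decomposition, and the key combinatorial input---that each cluster corresponds to a well-defined reduced subgraph realizing a smaller critical cell---requires careful tracking of strand labels and gauge choices across scales. I expect this bookkeeping to occupy the core of the proof, with the base case being exactly the elementary cancellation exhibited in \cref{ex:intro:2}.
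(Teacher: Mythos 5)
Your plan for stages (i)–(ii) is in the right spirit, but the paper does not run an induction on cluster-tree depth; instead it fixes a single explicit graph, the Le-diagram graph $\Gkn$, classifies its black vertices into three types relative to the top-level cluster partition $\BBmod_\bth$, and shows directly (\cref{lemma:type1} and \cref{prop:graph_reduction}) that the removals/contractions wipe out every intra-cluster weight in one pass, yielding a reduced graph $G''\in\Gred(\gbth)$ whose weights depend only on $\sinmapb_\pchn(\bth)$. Your suggestion to treat an innermost cluster ``as a smaller critical sub-cell with strand permutation inherited from $\pfkn$ and invoke the critical cell results of \cite{crit}'' is where I would push back: the cancellation in question is \emph{not} a general property of critical cells (the paper explicitly remarks that independence of infinitesimal ratios ``is very special to the top cell, and does not appear to hold for lower cells''), the rescaled cluster lives on a line rather than a circle, and the induced strand permutation on a cluster is not a top cell. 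Without a specific result that you can cite and that applies to such sub-configurations, the inductive step is unsubstantiated, whereas the paper sidesteps the issue by making the cancellation a purely combinatorial fact about $\Gkn$.

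The more serious gap is in stage (iii). You propose a left inverse $\Ctnnkn\to\Delta_{2,n}$ that expresses the normalized chord lengths as ratios of Pl\"ucker coordinates on the open cell and then asserts ``continuity extends it to the closure.'' That extension is precisely what fails to be automatic: at a boundary point $\Measbkn(\bth)$ lying in a lower positroid cell $\Povtp_{\gbth}$, the Pl\"ucker coordinates that enter your formula will generically tend to zero together, producing $0/0$ indeterminacies, and there is no a priori reason those limits exist or agree with $\mapCD(\bth)$. Recovering the side-length ratios on the boundary is the hard core of the injectivity proof, and the paper devotes \cref{sec:weak_sep,sec:inj} to it: it identifies which positroid $\gbth$ one lands in (\cref{lemma:gBB}), uses the Muller--Speyer left twist \cite[Corollary~5.11]{MuSp} to extract alternating edge-weight ratios around faces of a reduced graph for $\gbth$, and then manufactures the right faces to probe---$I_j$-arches (\cref{cor:touch_arch}) and interior square faces (\cref{cor:square_face})---via weak separation results from \cite{OPS}, before reassembling the inscribed polygon with \cref{lemma:polygon}. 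Even then, a nontrivial propagation argument over the directed graph $D$ on the cluster blocks is needed to recover every consecutive ratio $\dist(s,s-1):\dist(s-1,s-2)$. Your appeal to continuity is replacing all of this with an unproven claim, so as written the injectivity is not established.

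Finally, note that your stage (iv) is only asserted, not argued, and the face structure of $\Delta_{2,n}$ is governed by both the $x_i=0$ and $x_i=1$ facets, not merely which consecutive coordinates vanish; the precise matching with the strata of \cref{dfn:strat} does need to be checked, though it follows once the rest is in place.
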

We note that currently we have no analog of \cref{thm:intro:hyp2n} for other critical cells. First, independence of infinitesimal ratios is very special to the top cell, and does not appear to hold for lower cells. Second, showing that the map $\mapDC$ is a homeomorphism relies on the \emph{injectivity conjecture}~\cite[Conjecture~4.3]{crit} being true for a certain family of critical cells; see \cref{sec:inj}. This conjecture remains wide open for arbitrary critical cells. Nevertheless, limited computational evidence suggests that the stratified space $\Ctnn_f$ may be polytopal for each $f\in S_n$.

\subsection*{Acknowledgments} The author is grateful to the anonymous referee for their valuable feedback on the first version of the manuscript.

\section{Background on critical varieties}
We review the background on \emph{positroid cells} inside the \emph{totally nonnegative Grassmannian}~\cite{Pos}; see also~\cite{LamCDM}. We then recall the construction of \emph{critical cells} introduced in~\cite{crit}.

\subsection{Planar bipartite graphs}\label{sec:backr:plabic}
Fix a planar graph $G$ as in \cref{sec:intro:main}. Recall that the $n$ boundary vertices of $G$ are assumed to be black and to have degree $1$, and that $G$ is assumed to be reduced. Any non-reduced graph $G$ may be transformed into a reduced one using the moves in \cref{fig:moves_red,fig:moves_cs}.

\begin{figure}
  \includegraphics[width=0.9\textwidth]{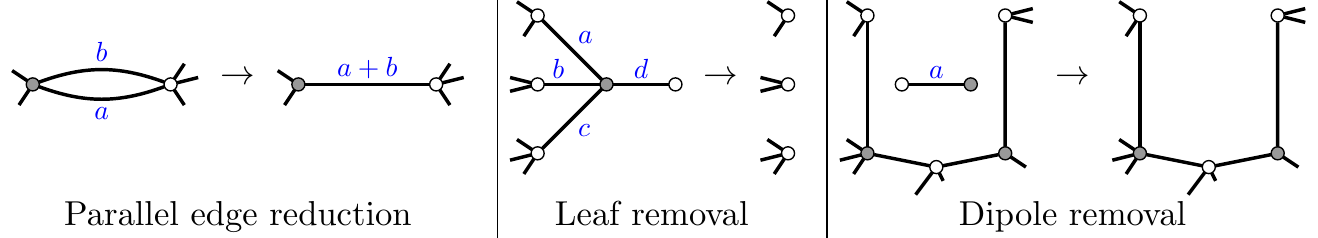}
  \caption{\label{fig:moves_red} Reduction moves for planar bipartite graphs. Each move preserves the boundary measurements.}
\end{figure}

\begin{figure}
  \includegraphics[width=1.0\textwidth]{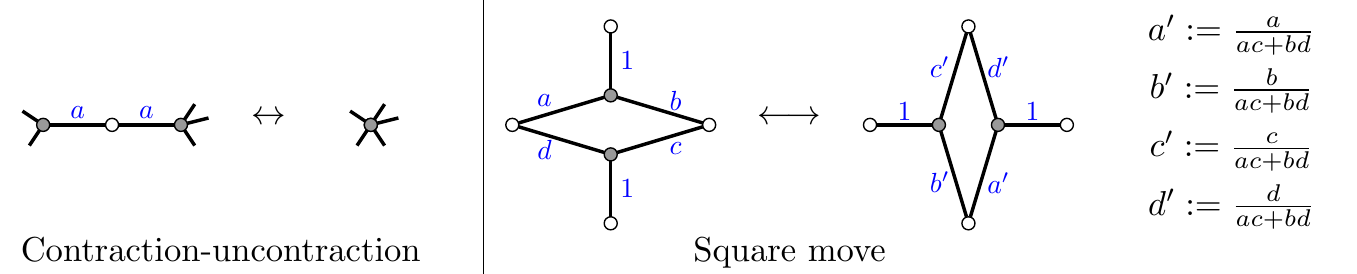}
  \caption{\label{fig:moves_cs} Moves for planar bipartite graphs preserving the boundary measurements and the strand permutation.}
\end{figure}

We switch to denoting strand permutations by $\fb_G$, and reserve the notation $f_G$ for \emph{bounded affine permutations} introduced below.
\begin{definition}\label{dfn:BAP}
A \emph{$(k,n)$-bounded affine permutation} is a bijection $f:\Z\to\Z$ such that
\begin{itemize}
\item $f(j+n)=f(j)+n$ for all $j\in\Z$,
\item $\sum_{j=1}^{n}(f(j)-j)=kn$, and
\item $j\leq f(j)\leq j+n$ for all $j\in\Z$.
\end{itemize}
\end{definition}
We denote the set of $(k,n)$-bounded affine permutations by $\Bkn$. For $f\in\Bkn$, we let $\fb\in S_n$ be obtained by reducing $f$ modulo $n$. In other words, $\fb$ is uniquely determined by the conditions $\fb(j)\in[n]$ and $\fb(j)\equiv f(j)$ modulo $n$ for all $j\in[n]$.

\begin{remark}\label{rmk:loopless}
We say that $f\in\Bkn$ is \emph{loopless} if $f(j)\neq j$ for all $j\in\Z$. Each permutation $\fb\in S_n$ arises via the above procedure from a unique loopless bounded affine permutation $f\in \Bkn$: for $j\in[n]$, one sets $f(j):=\fb(j)$ if $\fb(j)>j$ and $f(j):=\fb(j)+n$ otherwise. The remaining values $f(j+dn)=f(j)+dn$ are automatically determined for all $d\in\Z$. Positroid cells are labeled by arbitrary bounded affine permutations while critical cells are labeled by loopless bounded affine permutations, which is why in the introduction we used permutations in $S_n$ to label critical cells.
\end{remark}

In general, the bounded affine permutation $f_G$ is recovered from $\fb_G$ as follows. For $j\in[n]$, if $\fb_G(j)\neq j$ then $f_G(j)$ is uniquely determined by the conditions $j\leq f_G(j)\leq j+n$ and $f_G(j)\equiv \fb_G(j)$ modulo $n$. 
If $\fb_G(j)=j$ then, depending on the structure of $G$ (see \cref{dfn:loop_coloop}), either $j$ is a \emph{loop} (i.e., $f_G(j)=j$) or $j$ is a \emph{coloop} (i.e., $f_G(j)=j+n$). 

An \emph{affine inversion} of $f\in\Bkn$ is a pair $(p,q)\in\Z^2$ such that $p<q$ and $f(p)>f(q)$. The \emph{length} $\ell(f)$ of $f$ is the number of affine inversions of $f$ considered modulo $n$:
\begin{equation*}
  \ell(f):=\#\{p,q\in\Z\mid p<q,\ f(p)>f(q),\text{ and }p\in[n]\}.
\end{equation*}

The \emph{(real) Grassmannian} $\Gr(k,n)$ is the set of all linear $k$-dimensional subspaces of $\R^n$. Choosing a basis of each subspace, $\Gr(k,n)$ may be identified with the space of full rank $k\times n$ matrices $M$ considered modulo row operations. With this identification, one has a collection of \emph{Pl\"ucker coordinates} on $\Gr(k,n)$. Let ${[n]\choose k}$ denote the set of $k$-element subsets of $[n]$, and for each $I\in{[n]\choose k}$ and a $k\times n$ matrix $M$ we let $\Delta_I(M)$ denote the maximal minor of $M$ with column set $I$. Letting $I$ vary, we obtain the \emph{Pl\"ucker embedding} $\Gr(k,n)\hookrightarrow \RP^{{n \choose k}-1}$ sending the row span of $M$ to $(\Delta_I(M))_{I\in{[n]\choose k}}\in \RP^{{n \choose k}-1}$.

Let $\RPtp^{r-1}$ be the subset of $\RP^{r-1}$ where all coordinates are nonzero and have the same sign, and let $\RPtnn^{r-1}$ be the closure of $\RPtp^{r-1}$. The \emph{totally nonnegative Grassmannian} $\Grtnn(k,n)$ is the subset of $\Gr(k,n)$ where all nonzero Pl\"ucker coordinates have the same sign. In other words, $\Grtnn(k,n)$ is the preimage of $\RPtnn^{{n \choose k}-1}$ under the Pl\"ucker embedding.

Given a planar bipartite graph $G$ as above, the \emph{boundary measurement map} $\Measop_G:\Rtp^{E(G)}\to\Grtnn(k,n)$ is defined using the \emph{dimer model} on $G$. An \emph{almost perfect matching} $\Acal$ of $G$ is a collection of edges of $G$ which uses each interior vertex exactly once. Importantly (cf. \cref{lemma:Gr_limit} below), in order to define the boundary measurement map $\Measop_G$, we assume that $G$ admits at least one almost perfect matching.

Recall that the boundary vertices of $G$ are assumed to be black and have degree $1$. For an almost perfect matching $\Acal$, let $\partial(\Acal)\subseteq[n]$ denote the set of $p\in[n]$ such that the boundary vertex $b_p$ is used by $\Acal$. There is an integer $0\leq k\leq n$ depending only on $G$ such that $|\partial(\Acal)|=k$ for any almost perfect matching $\Acal$ of $G$. Given an edge weight function $\wt:E(G)\to\Rtp$, the weight $\wt(\Acal):=\prod_{e\in \Acal} \wt(e)$ of $\Acal$ is the product of the weights of the edges used by $\Acal$. For $I\in{[n]\choose k}$, we set
\begin{equation*}%
  \Delta_I(G,\wt):=\sum_{\Acal:\ \partial(\Acal)=I} \wt(\Acal).
\end{equation*}
We view the resulting boundary measurements
\begin{equation}\label{eq:Meas_dfn}
  \Meas(G,\wt):=(\Delta_I(G,\wt))_{I\in{[n]\choose k}}
\end{equation}
 up to multiplication by a common scalar, i.e., as an element of $\RP^{{n\choose k}-1}$. It was shown in~\cite{Pos,Talaska} (see~\cite[Theorem~4.1]{Lam}) that the entries of $\Meas(G,\wt)$ are the Pl\"ucker coordinates of some point of $\Grtnn(k,n)$ which we also denote by $\Meas(G,\wt)$.

\begin{definition}\label{dfn:loop_coloop}
It is known that when $\fb_G(j)=j$, exactly one of the following holds:
\begin{itemize}
\item $j\notin\Acal$ for any almost perfect matching $\Acal$ of $G$;
\item $j\in\Acal$ for any almost perfect matching $\Acal$ of $G$.
\end{itemize}
In the former case, we say that $j$ is a \emph{loop} and set $f_G(j)=j$. In the latter case, we say that $j$ is a \emph{coloop} and set $f_G(j)=j+n$. This completes the definition of the bounded affine permutation $f_G\in\Bkn$ associated to $G$. For $f\in\Bkn$, we let $\Gred(f)$ denote the set of all reduced planar bipartite graphs $G$ satisfying $f_G=f$. For  $G\in\Gred(f)$, the \emph{positroid cell} $\Ptp_G:=\{\Meas(G,\wt)\mid\wt:E(G)\to\Rtp\}$ depends only on $f$ and is denoted $\Ptp_f$. The \emph{top cell} bounded affine permutation $\fkn\in\Bkn$ is defined by $\fkn(p)=p+k$ for all $p\in\Z$. 
\end{definition}

\subsection{Critical cells}
Let $f\in\Bkn$ be a loopless bounded affine permutation and let $\fb\in S_n$ be the corresponding permutation. The combinatorics of the critical cell $\Ctp_f$ associated to $f$ is described by the following objects.
\begin{definition}\label{dfn:strand}
Place $2n$ points $\m 1,\p 1,\dots,\m n,\p n$ on the circle in clockwise order. The \emph{reduced strand diagram} of $f$ is obtained by drawing an arrow $\p s\to \m{\fb(s)}$ for each $s\in[n]$. We say that $p,q\in[n]$, $p\neq q$, \emph{form an $f$-crossing} if the arrows $\p s\to\m p$ and $\p t\to\m q$ cross, where $s:=\fb^{-1}(p)$ and $t:=\fb^{-1}(q)$. We say that $f$ has a \emph{connected strand diagram} if the resulting union of $n$ arrows is topologically connected. See \figref{fig:strand}(left) for an example.
\end{definition}
Throughout the paper, we assume that $f$ has a connected strand diagram. When the strand diagram of $f$ is not connected, the corresponding critical cell $\Ctp_f$ (as well as its closure $\Ctnn_f$) factorizes as a product over its connected components; see~\cite[Section~4.4]{crit}.

\begin{definition}
A tuple $\bth=(\th_1,\th_2,\dots,\th_n)\in\R^n$ is called \emph{$f$-admissible} if whenever two indices $1\leq p<q\leq n$ form an $f$-crossing, we have
\begin{equation}\label{eq:f_adm_dfn}
  \th_p<\th_q<\th_p+\pi.
\end{equation}
We let
\begin{equation}\label{eq:THtp_dfn}
  \THtp_f:=\{\bth\in\R^n\mid \th_1=0\text{ and $\bth$ is $f$-admissible}\}.
\end{equation}
\end{definition}
Letting $v_r:=\exp(2i\th_r)$ for $r\in[n]$, we obtain a configuration $\bv=(v_1,v_2,\dots,v_n)$ of $n$ points on the unit circle which are not necessarily distinct or ordered counterclockwise. The condition $\th_1=0$ reflects that we consider these points modulo rotations of the circle.

A graph $G\in\Gred(f)$ is called \emph{contracted} if it has no degree $2$ vertices that are not adjacent to the boundary. Any graph $G\in\Gred(f)$ may be transformed into a contracted one using \emph{contraction-uncontraction moves} (\figref{fig:moves_cs}(left)) which do not affect the boundary measurements of $G$.

Given a contracted graph $G\in\Gred(f)$ and an $f$-admissible tuple $\bth\in\THtp_f$, we define a weight function $\wt_\bth:E(G)\to\Rtp$ similarly to~\eqref{eq:intro:wt_bv}: if $e\in E(G)$ is labeled by $\{p,q\}$ with $1\leq p<q\leq n$ then we set
\begin{equation}\label{eq:wt_bth}
  \wt_\bth(e):=  \begin{cases}
    \sin(\th_q-\th_p), &\text{if $e$ is not incident to a boundary vertex;}\\
    1, &\text{otherwise.}
  \end{cases}
\end{equation}
By~\cite[Proposition~4.2]{crit}, we indeed get $\wt_{\bth}(e)>0$ for all $e\in E(G)$. Setting $v_r:=\exp(2i\th_r)$ for $r\in[n]$, we get $\sin(\th_q-\th_p)=\frac12|v_q-v_p|$. Thus $\wt_\bth$ differs from $\wt_\bv$ defined in~\eqref{eq:intro:wt_bv} by applying gauge transformations at all black interior vertices.

The crucial property of this assignment of edge weights is that the resulting boundary measurements are invariant under \emph{square moves} (\figref{fig:moves_cs}(right)). Thus it follows from the results of~\cite{Pos} that the point $\Meas(G,\wt_\bth)$ does not depend on the choice of $G$. We denote $\Meas(f,\bth):=\Meas(G,\wt_\bth)$. The \emph{critical cell} is given by
\begin{equation*}%
  \Ctp_f:=\{\Meas(f,\bth)\mid \bth\in\R^n\text{ is $f$-admissible}\}.
\end{equation*}

\section{Affine poset cyclohedra}
We review some definitions and properties of affine posets and the associated polytopes; see~\cite{crit_polyt} for further details. 

\subsection{Order polytopes and tubings}\label{sec:order_tubings}
We start with ordinary posets. Let $(P,\leqp)$ be a connected (i.e., having a connected Hasse diagram) poset with $|P|\geq2$. 
Let $\al_P:\R^P\to\R$ be a linear function given by
\begin{equation*}%
  \al_P(\bx):=\sum_{p\,\precdot_P\, q} x_q-x_p,
\end{equation*}
where the sum is taken over all covering relations $p\precdot_P q$ in $P$. Let $\RSZ^P$ denote the linear subspace of $\R^P$ consisting of vectors whose sum of coordinates is zero. Consider a $(|P|-2)$-dimensional polytope %
\begin{equation*}%
  \Ord(P):=\{\bx\in\RSZ^P\mid \al_P(\bx)=1\text{ and }x_p\leq x_q\text{ for all $p\leqp q$}\}.
\end{equation*}
 When $P$ has a maximal and a minimal element, $\Ord(P)$ is projectively equivalent to the \emph{order polytope}~\cite{Stanley_two} of $P$; see~\cite[Remark~2.5]{crit_polyt}.

For a subset $\t\subseteq P$, we say that $\t$ is \emph{convex} if for any three elements $p\leqp q\leqp r$ such that $p,r\in\t$, we have $q\in\t$. We say that $\t$ is \emph{connected} if the restriction of $\leqp$ to $\t$ is a connected poset. A \emph{$P$-tube} is a convex connected nonempty subset $\t\subseteq P$. A \emph{tubing partition} of $P$ is a set partition $\T$ of $P$ into disjoint $P$-tubes such that the directed graph $D_\T$ with vertex set $V(D_\T):=\T$ and edge set
\begin{equation}\label{eq:acyclic}
  E(D_\T):=\{(\t,\t')\mid \t\cap \t'=\emptyset\text{ and }p\lp q \text{ for some $p\in\t$, $q\in\t'$}\}
\end{equation}
is acyclic. The faces of $\Ord(P)$ are in bijection with tubing partitions of $P$. Explicitly, given a point $\bx\in\Ord(P)$, consider a maximal by inclusion set $I\subseteq P$ such that all coordinates in $\{x_p\}_{p\in I}$ coincide. Then $I$ is a disjoint union of $P$-tubes, which are the connected components of the induced subgraph of the Hasse diagram of $P$ with vertex set $I$. Collecting these $P$-tubes for all such sets $I$, we obtain a tubing partition of $P$ denoted $\tubes(\bx)$.

\begin{definition}
An \emph{affine poset (of order $n\geq1$)}  is a poset $\Pa=(\Z,\leqPa)$ such that:
\begin{itemize}%
\item for all $p\in\Z$, $p\lPa p+n$;
\item for all $p,q\in\Z$, $p\leqPa q$ if and only if  $p+n\leqPa q+n$;%
\item for all $p,q\in\Z$, we have $p\leqPa q+dn$ for some $d\geq0$.
\end{itemize}
\end{definition}
\noindent We denote $|\Pa|:=n$.

We identify points $\bth\in\R^{|\Pa|}$ with infinite sequences $\btht=(\tht_p)_{p\in\Z}$ satisfying $\tht_p=\th_p$ for $p\in[n]$ and $\tht_{p+n}=\tht_p+\pi$ for $p\in\Z$. Consider the $(n-1)$-dimensional \emph{affine order polytope} $\Orda(\Pa)$ and its interior $\Oao(\Pa)$ defined by
\begin{align}
  \Orda(\Pa)&:=\{\bth\in\R^{|\Pa|}\mid \th_1=0\text{ and }\tht_p\leq\tht_q\text{ for all $p\leqpa q$}\},\\
\label{eq:Oao_Pa}
  \Oao(\Pa)&:=\{\bth\in\R^{|\Pa|}\mid \th_1=0\text{ and }\tht_p<\tht_q\text{ for all $p\lpa q$}\}.
\end{align}
A \emph{$\Pa$-tube} (or simply a \emph{tube}) is a convex connected nonempty subset $\t\subseteq\Pa$ such that either $\t=\Pa$ or $\t$ contains at most one element in each residue class modulo $n$. For each tube $\t$, we denote by $\eq[\t]:=\{\t+dn\mid d\in\Z\}$ its \emph{equivalence class}, where $\t+dn:=\{p+dn\mid p\in\t\}$. A collection $\T$ of tubes is called \emph{$n$-periodic} if it is a union of such equivalence classes. 

We say that two sets $A,B$ are \emph{nested} if either $A\subseteq B$ or $B\subseteq A$. 
\begin{definition}
A \emph{$\Pa$-tubing} (or simply a \emph{tubing}) is an $n$-periodic collection $\T$ of tubes such that any two tubes in $\T$ are either nested or disjoint, and such that the directed graph $D_\T$ given by~\eqref{eq:acyclic} is acyclic. A tube $\t$ is called \emph{proper} if $\t\neq\Pa$ and $|\t|>1$. A tubing $\T$ is called \emph{proper} if it consists of proper tubes. A \emph{tubing partition} of $\Pa$ is a tubing $\T$ which is simultaneously a set partition of $\Z$.
\end{definition}
The face poset of $\Orda(\Pa)$ is isomorphic to the poset of tubing partitions of $\Pa$ ordered by refinement. For example, the vertices of $\Orda(\Pa)$ are in bijection with equivalence classes of \emph{maximal proper tubes} which are tubes $\t\neq \Pa$ satisfying $|\t|=n$. For a point $\bth\in\Orda(\Pa)$, we let $\tubes(\bth)$ denote the corresponding tubing partition of $\Pa$.

\begin{example}\label{ex:circ}
Let $n=5$. Consider the affine poset $\Pa$ of order $|\Pa|=n$ in \figref{fig:circ}(a). We may identify $\Orda(\Pa):=\{(\th_2,\th_3,\th_4,\th_5)\in\R^4\mid 0\leq \th_3\leq \th_4\leq \pi\text{ and } 0\leq \th_2\leq\th_5\leq\pi\}$. Thus, the order polytope $\Orda(\Pa)$ is the direct product of two triangles. The tubing $\T$ shown in \figref{fig:circ}(b) consists of the tubes $\tube:=\{0,1,2,3,4\}$, $\tube':=\{0,1,2\}$, $\tube'':=\{3,4\}$, and the tubes equivalent to them. The tube $\tube$ is a maximal proper tube; the corresponding vertex of $\Orda(\Pa)$ is given by $\th_2=\th_3=\th_4=0$, $\th_5=\pi$. This vertex is the limit inside $\Orda(\Pa)$ of the family $\bth^{(t)}$ of points of $\Oao(\Pa)$ shown in \figref{fig:circ}(c). Here, $\th_2^{(t)}=at^2$, $\th_3^{(t)}=at^2+t$, $\th_4^{(t)}=at^2+t+ct^2$, and $\th_5^{(t)}=-bt^2$. A more refined limit will be considered in \cref{ex:circ2}.
\end{example}

\begin{figure}
\begin{tabular}{c|c|ccc}
\includegraphics[width=0.1\textwidth]{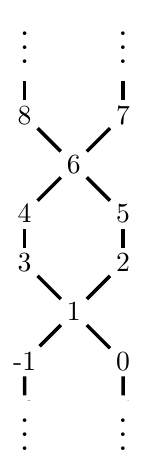}
& 
\includegraphics[width=0.1\textwidth]{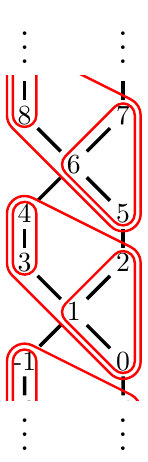}
&
\includegraphics[width=0.24\textwidth]{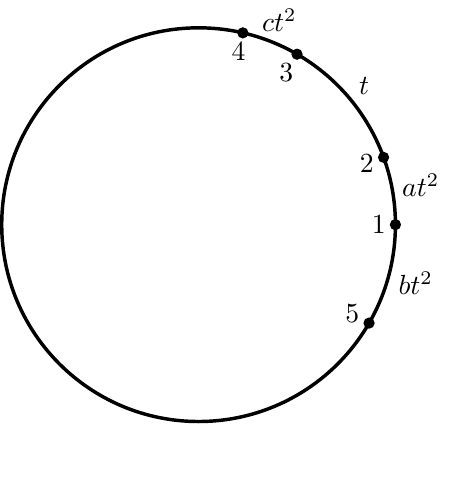}
&
\begin{tikzpicture}[baseline=(Z.base)]
\coordinate(Z) at (0,-2.7);
\node(A) at (0,0){$\xrightarrow{t\to0}$};
\end{tikzpicture}
&
\includegraphics[width=0.4\textwidth]{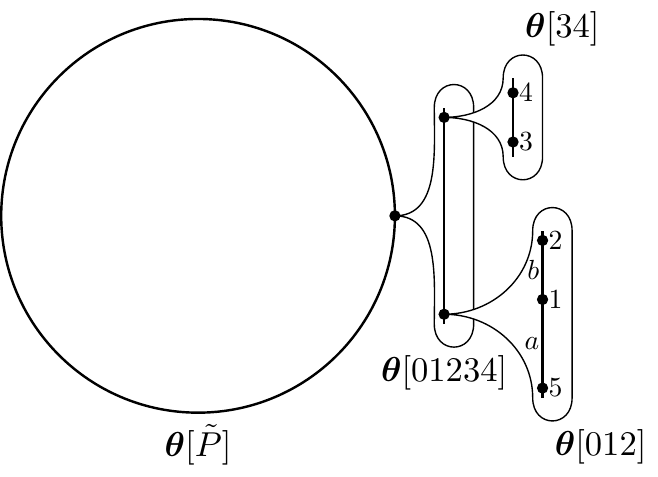}
\\
  (a) $\Pa$ & (b) $\T$ & (c) $\bth^{(t)}\in\Oao(\Pa)$ &  & (d) $\bth\in\Comp(\Pa)$
\end{tabular}
  \caption{\label{fig:circ}An affine poset $\Pa$, a tubing $\T$ of $\Pa$, and a family $\bth^{(t)}\in\Oao(\Pa)$ converging to a point $\bth$ in the compactification $\Comp(\Pa)$ satisfying $\T(\bth)=\T$. Here $a,b,c>0$ are constants, and the limiting point $\bth$ depends on the ratio $a:b$ but does not depend on $c$. See \cref{ex:circ,ex:circ2}.}
\end{figure}

\subsection{Affine poset cyclohedra and compactifications}\label{sec:aff_pos_cycloh_and_compact}
We showed in~\cite{crit_polyt} that there is an $(n-1)$-dimensional polytope $\Cyc(\Pa)$, called an \emph{affine poset cyclohedron}, whose face poset is the poset of proper tubings ordered by reverse inclusion. For example, the vertices of $\Cyc(\Pa)$ are in bijection with proper tubings $\T$ satisfying $|\EQ[\T]|=n-1$, where $\EQ[\T]:=\{\eq[\t]\mid \t\in\T\}$ is the set of equivalence classes of tubes in $\T$. 


In addition, we showed in~\cite{crit_polyt} that $\Cyc(\Pa)\cong \Comp(\Pa)$ arises as a compactification of the space $\Oao(\Pa)$. We first explain the construction of $\Comp(\Pa)$ informally. The space $\Oao(\Pa)$ defined in~\eqref{eq:Oao_Pa} may be identified with a configuration space of $n$ points on a circle: setting $v_r:=\exp(2i\th_r)$ for $r\in\Z$ as in \cref{rmk:exp_simplex}, we have $v_{r+n}=v_r$ for all $r\in\Z$. The points $v_p,v_q$ are not allowed to pass through each other whenever $p,q\in\Z$ are comparable in $\Pa$. For instance, for $\Pa$ in \figref{fig:circ}(a), $v_3$ cannot pass through $v_4$ but can pass through $v_2$. The compactification $\Comp(\Pa)$ is obtained by allowing the points to collide and keeping track of the ratios of distances between the points in the limit. This leads to a recursive picture of the type shown in \figref{fig:circ}(d). The positions of the points on the circle define a point $\bth[\Pa]\in\Ord(\Pa)$. The face of $\Ord(\Pa)$ containing $\bth[\Pa]$ is labeled by a tubing partition $\tubes(\bth[\Pa])$ of $\Pa$. For each $\tube\in\tubes(\bth[\Pa])$, the points in $\tube$ have collided together. However, we would like to ``zoom in'' and keep track of the ratios of distances between these points, which naturally gives rise to a point in $\Ord(\tube)$ denoted $\bth[\tube]$. Iterating this process, we obtain a tubing $\T:=\T(\bth)$ (cf. \cref{dfn:T(bth)} below) and a collection $(\bth[\t])_{\t\in\T\sqcup\{\Pa\}}$, where  $\bth[\t]\in\Ord(\t)$ for each $\t\in\T\sqcup\{\Pa\}$. 
 Keeping track of this data while letting the points collide in all possible ways, we obtain the compactification $\Comp(\Pa)$.

\begin{example}\label{ex:circ2}
Consider a sequence of points $\bth^{(t)}\in\Oao(\Pa)$  given in \figref{fig:circ}(c); cf. \cref{ex:circ}. Taking a limit as $t\to0$, we find that $(|v_1-v_2|:|v_1-v_5|:|v_2-v_5|)\to(a:b:a+b)$, $(|v_1-v_2|:|v_2-v_3|)\to(0:1)$, and $(|v_2-v_3|:|v_3-v_4|)\to(1:0)$. The resulting tubing $\T(\bth)$ is shown in \figref{fig:circ}(b). Let $\tube:=\{0,1,2,3,4\}$, $\tube':=\{0,1,2\}$, $\tube'':=\{3,4\}$. The point $\bth[\Pa]$ corresponds to the vertex of $\Ord(\Pa)$ labeled by the maximal proper tube $\tube=\{0,1,2,3,4\}$; cf. \cref{ex:circ}. On the other hand, the point $\bth[\tube']$ for $\tube'=\{0,1,2\}$ records the $(a:b:a+b)$ ratio of distances between $v_5,v_1,v_2$. Note that the limit $(|v_1-v_2|:|v_3-v_4|)\to(a:c)$ is not recorded by the points $\bth[\Pa],\bth[\tube],\bth[\tube'],\bth[\tube'']$ shown in \figref{fig:circ}(d); the value of the constant $c$ is lost in the limit. 
\end{example}


We now define $\Comp(\Pa)$ formally. Let $\t\subsetneq\Pa$ be a proper tube. We treat $\t$ as a finite subposet $(\t,\leqpa)$ of $\Pa$, thus, we may consider the order polytope $\Ord(\t)$. The projection $\R^{|\Pa|}\to\R^\t$ sending $(\tht_p)_{p\in\Z}\mapsto(\tht_p)_{p\in\t}$ gives rise to a map $\res_\t:\Oao(\Pa)\to\Oo(\t)$. (We will later obtain the point $\bth[\t]\in\Ord(\t)$ as the limit of $\res_\t(\bth^{(t)})$ as $t\to0$.) More precisely, given any set $A\supseteq \t$, define the following maps:
\begin{align*}%
 &\avg_\t:\R^A\to \R, \quad \bx\mapsto \frac1{|\t|}\sum_{p\in \t} x_p; &\quad &\psz^\t:\R^A\to\RSZ^\t,  \quad \bx\mapsto (x_p-\avg_\t(\bx))_{p\in \t};\\
  &\al_\t:\R^A\to\R,\quad \bx\mapsto \sum_{p,q\in\t:\ p\,\precdot_{\Pa}\, q} x_q-x_p;&\quad &\res_\t:\R^A\dashrightarrow \R^\t,\quad \bx\mapsto \frac1{\al_\t(\bx)} \psz^\t(\bx).
\end{align*}
Here $\res_\t$ is a rational map defined on the subset of $\R^A$ where $\al_\t(\bx)\neq0$. Applying this construction to the case $A=\Z$, we obtain a map $\res_\t:\Oao(\Pa)\to\Oo(\t)$. Notice that $\al_\t$ takes strictly positive values on $\Oao(\Pa)$. By convention, for $\bth\in\Oao(\Pa)$, we set $\res_{\Pa}(\bth):=\bth$. Let
\begin{equation*}%
    \ResPa: \Oao(\Pa)\to\bar{\prod_{|\t|>1}} \Ord(\Pax[\t]),\quad \bth\mapsto(\res_\t(\bth))_{|\t|>1}.
\end{equation*}
Here $\prodb_{|\t|>1} \Ord(\Pax[\t])$ is the set of points $(\bth[\t])_{|\t|>1}\in\prod_{|\t|>1} \Ord(\Pax[\t])$ satisfying $\bth[\t]=\bth[\t']$ whenever two tubes $\t,\t'$ are equivalent. The product is taken over all non-singleton tubes $\t$, including the case $\t=\Pa$.  The compactification
\begin{equation}\label{eq:intro:Comp_Pa}
    \Comp(\Pa):=\overline{\ResPa(\Oao(\Pa))}
\end{equation}
is defined as the closure of the image of $\ResPa$.

By definition, each point $\bth\in\Comp(\Pa)$ is an element $(\bth[\t])_{|\t|>1}$ of the product $\prodb_{|\t|>1} \Ord(\Pax[\t])$. We refer to its coordinates as $(\tht_i[\t])_{i\in\t}$ for each non-singleton tube $\t$. We showed in~\cite[Proposition~3.9]{crit_polyt} that $\Comp(\Pa)$ may be alternatively described as the subset of $\prodb_{|\t|>1} \Ord(\Pax[\t])$ consisting of all points satisfying the following \emph{coherence condition}:
\begin{equation}\label{eq:coh}
\text{for any $\t\subsetneq\t_+$ with $|\t|>1$, there exists $\la\in\Rtnn$ such that }\psz^\t(\bth\tbr[\tp])=\la\bth\tbr[\t].
\end{equation}
\begin{definition}\label{dfn:T(bth)}
For $\bth\in\Comp(\Pa)$, let $\HAT(\bth)$ be the smallest collection of tubes such that
\begin{itemize}
\item $\HAT(\bth)$ contains $\Pa$;
\item for each non-singleton $\t\in\HAT(\bth)$, $\HAT(\bth)$ also contains all tubes in $\tubes(\bth[\t])$.
\end{itemize}
We let $\T(\bth)$ be obtained from $\HAT(\bth)$ by removing $\Pa$ and all singleton tubes. More generally, for a proper tubing $\T$, we let $\HAT$ be obtained from $\T$ by adding $\Pa$ and all singleton tubes, and vice versa.
\end{definition}

\begin{remark}
The informal description given at the beginning of the section resulted in a pair $(\T,(\bx[\t])_{\t\in\T\sqcup\{\Pa\}})$. In the formal description~\eqref{eq:intro:Comp_Pa}, a point $\bx\in\Comp(\Pa)$ is by definition a collection $\bx[\t]$ for all tubes $\t$ satisfying $|\t|>1$. \Cref{dfn:T(bth)} explains how to recover the tubing $\T$ from $\bx\in\Comp(\Pa)$. Moreover, one can see from~\eqref{eq:coh} that for each $\t'\notin\T\sqcup\{\Pa\}$, the point $\bx[\t']$ is uniquely determined by the tuple $(\bx[\t])_{\t\in\T\sqcup\{\Pa\}}$; see~\cite[Proposition~3.11]{crit_polyt}. This explains the equivalence between the formal description~\eqref{eq:intro:Comp_Pa} and the informal description above.
\end{remark}

The space $\Comp(\Pa)$ is naturally subdivided into cells labeled by proper tubings: for a proper tubing $\T$, the corresponding cell is given by
\begin{equation*}%
  \DT:=\{\bth\in\Comp(\Pa)\mid \T(\bth)=\T\}.
\end{equation*}
Cell closure relations are given by reverse inclusion of tubings: 
\begin{equation*}%
  \Closure(\D_\Tubing)=\bigsqcup_{\Tubing'\supseteq\Tubing} \D_{\Tubing'}.
\end{equation*}
\begin{theorem}[{\cite[Theorem~1.11]{crit_polyt}}]\label{thm:cyc_comp}
    There exists a stratification-preserving homeomorphism $\Cyc(\Pa)\xrasim\Comp(\Pa)$.
\end{theorem}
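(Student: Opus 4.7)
The plan is to construct an explicit map $\Phi:\Cyc(\Pa)\to\Comp(\Pa)$ and verify that it is a stratification-preserving homeomorphism, proceeding by induction on $n=|\Pa|$. The base case, where $n$ is small enough that $\Cyc(\Pa)$ reduces to a point or an interval, is handled directly.

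First, I would describe the two stratifications via parallel recursive decompositions. On the polytope side, using the combinatorics of the face poset of proper tubings, the face of $\Cyc(\Pa)$ indexed by a proper tubing $\T$ should decompose as a product, over equivalence classes $[\t]\in\EQ[\HAT]$ of non-singleton tubes, of relative interiors of smaller poset cyclohedra (affine when $\t=\Pa$, ordinary otherwise) associated to $\t$ with its children in $\HAT$ collapsed to points. On the compactification side, the cell $\D_\T\subseteq\Comp(\Pa)$ admits the same product structure: by the coherence condition~\eqref{eq:coh}, a point in $\D_\T$ is determined by the tuple $(\bth[\t])_{\t\in\HAT,\,|\t|>1}$, and each $\bth[\t]$ lies in the open stratum of $\Ord(\t)$ cut out by the children of $\t$ in $\HAT$. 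Thus $\D_\T$ and the corresponding face have matching dimensions.

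Next, I would define $\Phi$ cell-by-cell, matching the two product decompositions above using the inductive hypothesis applied to the smaller cyclohedra associated to children tubes. Injectivity inside each cell is immediate from the factorization; global injectivity follows because the tubing $\T(\bth)$ can be read off from $\bth\in\Comp(\Pa)$, so the cells $\D_\T$ for distinct $\T$ are disjoint. Surjectivity is clear cell-wise. Since $\Cyc(\Pa)$ is compact and $\Comp(\Pa)$ is Hausdorff as a subspace of a product of compact metric spaces, any continuous bijection between them is automatically a homeomorphism, and stratification preservation is built into the construction.

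The main obstacle lies in establishing global continuity across strata. Concretely, if a sequence $\bth^{(t)}\in\Oao(\Pa)$ converges in the bubble picture of $\Cyc(\Pa)$ to a limit encoded by a tuple $(\bth[\t])_{\t\in\HAT}$, one must verify that $\res_\t(\bth^{(t)})\to\bth[\t]$ for every non-singleton tube $\t$. This requires a careful multi-scale asymptotic analysis of distance ratios and is handled inductively using the recursive structure of $\Cyc(\Pa)$ as a tower of smaller cyclohedra over its boundary strata. The rigidity furnished by the coherence condition~\eqref{eq:coh} ensures that the limits $\bth[\t]$ for the principal tubes $\t\in\HAT$ uniquely determine the remaining data $\bth[\t']$ for $\t'\notin\HAT$, matching the recursive structure on the polytope side and closing the argument.
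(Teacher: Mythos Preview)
This theorem is not proved in the present paper: it is stated with the citation \cite[Theorem~1.11]{crit_polyt} and invoked as a black box, so there is no proof here to compare your proposal against. Your outline is a plausible strategy for how such a result could be established, and indeed the ingredients you name---the product decomposition of faces and cells indexed by tubings, the coherence condition~\eqref{eq:coh} pinning down the non-principal coordinates, and the compact-to-Hausdorff argument---are the expected ones. That said, what you have written is explicitly a plan (``I would describe\ldots'', ``I would define\ldots'') rather than a proof: the map $\Phi$ is never actually constructed, the base case is asserted without detail, and the continuity across strata, which you correctly flag as the crux, is left as ``a careful multi-scale asymptotic analysis'' to be carried out. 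If your goal is to supply a self-contained proof where the paper only cites one, you would need to fill in precisely these steps; as it stands the proposal is an accurate roadmap but not a proof.
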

\begin{remark}\label{rmk:interior_identify}
In what follows, we always identify $\Cyc(\Pa)$ with $\Comp(\Pa)$. The map $\ResPa$ gives a homeomorphism between $\Oao(\Pa)$ and the unique open dense cell $\D_{\emptyset}$ of $\Comp(\Pa)$, and we identify each of these spaces with the interior of the affine poset cyclohedron: 
\begin{equation*}
  \Oao(\Pa)\cong\D_{\emptyset}\cong\Cyco(\Pa).
\end{equation*}
\end{remark}

\subsection{Circular chains}
Let $\Pa$ be an affine poset. Our goal is to construct a particular family of continuous functions on $\Cyc(\Pa)$ indexed by \emph{circular $\Pa$-chains}.

\begin{definition}\label{dfn:circular_chain}
We say that a tuple $\pch:=(p_1,p_2,\dots,p_r)$ of integers is a \emph{circular $\Pa$-chain} if 
\begin{equation}\label{eq:vertex_chain}
  p_1\lPa p_2\lPa\cdots\lPa p_r \lPa p_1+n.
\end{equation}
Thus $\pch$ is a circular $\Pa$-chain if and only if $\sigma(\pch):=(p_2,\dots,p_r,p_1+n)$ is a circular $\Pa$-chain. We say that two such tuples differ by \emph{cyclic relabeling}. We say that a tube $\t$ \emph{contains the residues of $\pch$ modulo $n$} if for each $j\in[r]$, we have $p_j+d_jn\in\t$ for some $d_j\in\Z$. Equivalently, since each tube $\t$ is convex, it follows that $\t$ contains the residues of $\pch$ modulo $n$ if and only if $\t$ contains all elements of a circular $\Pa$-chain $\sigma^s(\pch)$ for some $s\in\Z$.
\end{definition}

Given a circular $\Pa$-chain $\pch=(p_1,p_2,\dots,p_r)$ and a point $\bth\in\Cyc(\Pa)$, the point $\bth[\Pa]\in\Orda(\Pa)$ satisfies
\begin{equation}\label{eq:tht_leq_dots_leq}
  \tht_{p_1}[\Pa]\leq \tht_{p_2}[\Pa]\leq\dots\leq \tht_{p_r}[\Pa]\leq \tht_{p_1+n}[\Pa]=\tht_{p_1}[\Pa]+\pi.
\end{equation}
For any tube $\t\subsetneq\Pa$ satisfying $p_1,p_2,\dots,p_r\in\t$, the vector $\bth[\t]\in\Ord(\Pax[\t])$ satisfies
\begin{equation}\label{eq:th_leq_dots_leq}
  \tht_{p_1}[\t]\leq \tht_{p_2}[\t]\leq\dots\leq \tht_{p_r}[\t].
\end{equation}

\begin{lemma}\label{lemma:sin_map}
Let $\Pa$ be an affine poset, and suppose that $\pch=(p_1,p_2,\dots,p_r)$ is a circular $\Pa$-chain. Then the map
\begin{equation}\label{eq:sin_map}
  \sinmap_{\pch}: \Cyco(\Pa) \to \RPtp^{r-1},\quad \bth \mapsto \left(\sin(\tht_{p_2}-\tht_{p_1}):\cdots:\sin(\tht_{p_r}-\tht_{p_{r-1}}):\sin(\tht_{p_1+n}-\tht_{p_r})\right)
\end{equation}
extends to a continuous map 
\begin{equation*}%
  \sinmapb_{\pch}: \Cyc(\Pa) \to \RPtnn^{r-1}.
\end{equation*}
\end{lemma}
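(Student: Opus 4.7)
The plan is to extend $\sinmap_{\pch}$ continuously by analyzing limits along sequences $\bth^{(m)} \in \Cyco(\Pa)$ converging to $\bth \in \Cyc(\Pa) = \Comp(\Pa)$, using the coordinates $\bth[\t]$ for tubes $\t \in \HAT(\bth)$. (The case $r = 1$ is trivial since $\RPtnn^0$ is a single point, so assume $r \geq 2$.) Writing $\delta_l^{(m)} := \tht_{p_{l+1}}^{(m)} - \tht_{p_l}^{(m)}$ with $p_{r+1} := p_1 + n$, these differences are strictly positive on $\Cyco(\Pa)$ by~\eqref{eq:tht_leq_dots_leq}, sum to $\pi$, and by continuity of the projection $\Comp(\Pa) \twoheadrightarrow \Orda(\Pa)$ converge to $\delta_l := \tht_{p_{l+1}}[\Pa] - \tht_{p_l}[\Pa] \in [0, \pi]$ still summing to $\pi$. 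In the generic case where some $\delta_l \in (0, \pi)$, coordinate-wise convergence gives $\sinmap_\pch(\bth^{(m)}) \to (\sin \delta_1 : \cdots : \sin \delta_r) \in \RPtnn^{r-1}$, a well-defined projective point since $\sin \delta_l > 0$; I set $\sinmapb_\pch(\bth)$ equal to this limit.

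The degenerate case is when all $\delta_l \in \{0, \pi\}$: since they sum to $\pi$, exactly one $\delta_{l_0} = \pi$ and the rest vanish. Then the chain elements collapse at the $\Pa$-scale into two clusters, contained in a tube $\t \in \tubes(\bth[\Pa]) \subseteq \HAT(\bth)$ and its shift $\t + n$; after possibly swapping the two clusters, I may assume $|\t| \geq 2$. Setting $q_l := p_l$ for $l \leq l_0$ and $q_l := p_l - n$ for $l > l_0$, all $q_l$ lie in $\t$ and form a chain $q_{l_0+1} \lPa \cdots \lPa q_r \lPa q_1 \lPa \cdots \lPa q_{l_0}$ in $\t$. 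The identity $\tht_{q_{l+1}}^{(m)} - \tht_{q_l}^{(m)} = \alpha_\t(\bth^{(m)}) \cdot (\tht_{q_{l+1}}^{(m)}[\t] - \tht_{q_l}^{(m)}[\t])$ (valid on $\Oao(\Pa)$ from the formula $\bth[\t] = \psz^\t(\bth)/\alpha_\t(\bth)$, and preserved in the limit by the coherence condition~\eqref{eq:coh}), combined with the asymptotics $\sin x = x + O(x^3)$ and $\sin(\pi - x) = x + O(x^3)$, yields the projective limit $(a_1 : \cdots : a_r)$, where $a_l := \tht_{q_{l+1}}[\t] - \tht_{q_l}[\t]$ for $l \neq l_0$ (cyclically, $q_{r+1} := q_1$) and $a_{l_0} := \tht_{q_{l_0}}[\t] - \tht_{q_{l_0+1}}[\t]$. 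A short telescoping calculation gives $\sum_{l \neq l_0} a_l = a_{l_0}$, so the point is well-defined in $\RPtnn^{r-1}$ whenever some $a_l > 0$; if all $a_l = 0$, then $\bth[\t]$ is itself chain-degenerate and I iterate using the tube $\t' \in \tubes(\bth[\t]) \subseteq \HAT(\bth)$ containing the $q_l$. The recursion terminates since $\HAT(\bth)$ is finite and the successive tubes strictly decrease in size.

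The main obstacle will be the continuity verification: showing that this definition is independent of the approximating sequence, glues continuously across the interface between the generic and degenerate cases, and behaves coherently across the recursive steps. I expect this to follow from uniform versions of the asymptotics $\sin x = x + O(x^3)$ and $\sin(\pi - x) = x + O(x^3)$, the continuity of each coordinate map $\bth \mapsto \bth[\t]$ on $\Comp(\Pa)$, and the compatibility enforced by the coherence condition~\eqref{eq:coh}.
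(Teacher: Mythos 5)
Your approach is essentially the paper's, reorganized as a limit computation: pass to the minimal tube $\t$ at which the chain stops degenerating, and express the limit of the sines in terms of $\bth[\t]$ by replacing $\sin x$ with $x$ (and $\sin(\pi-x)$ with $x$) at small scales. Your $q_l$-relabeling and the telescoping identity $\sum_{l\neq l_0}a_l=a_{l_0}$ correctly reproduce the paper's formula~\eqref{eq:ratmap_t}, and the recursion over nested tubes matches the paper's choice of a minimal-by-inclusion tube in $\HAT(\bth)$ containing the residues of $\pch$.

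Two steps are missing, however. First, the claim that the collapsed chain elements all lie in a \emph{single} tube $\t\in\tubes(\bth[\Pa])$ (and analogously at each recursive stage, where you pass to $\t'\in\tubes(\bth[\t])$) is asserted without justification. The level set $S=\{p\in\Z:\tht_p[\Pa]=c\}$ is convex but a priori disconnected, so the collapsed chain elements could in principle be spread across several tubes. The paper closes this by noting that the circular $\Pa$-chain yields a path in the Hasse diagram of $\Pa$ through all its elements; convexity of $S$ forces the entire path into $S$, hence into a single connected component of $S$, hence into one tube. Without this the recursion is not well-defined. Second, the continuity verification is only a plan. Defining $\sinmapb_\pch$ via limits along sequences from $\Cyco(\Pa)$ does give a continuous extension, but only once one shows the limit exists and is independent of the approximating sequence at \emph{every} boundary point; and in the paper this is where the real work lies, requiring a comparison between the minimal tube $\t'$ of the approximating sequence and the minimal tube $\t$ of the limit, a case split among $\t=\t'$, $\t\subsetneq\t'\subsetneq\Pa$, and $\t\subsetneq\t'=\Pa$, and an appeal to the coherence condition~\eqref{eq:coh} to translate between scales. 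Your outline names the right ingredients, but the case analysis must actually be carried out before the lemma can be considered proved.
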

\begin{proof}
Let $\bth\in \Comp(\Pa)\cong \Cyc(\Pa)$ and let $\T:=\T(\bth)$ be the associated tubing. Let $\t\in\HAT$ be a minimal by inclusion tube containing the residues of $\pch$ modulo $n$. 

If $\t=\Pa$ then we set
\begin{equation}\label{eq:ratmap_Pa}
\sinmapb_\pch(\bth):=\left(\sin(\tht_{p_2}[\Pa]-\tht_{p_1}[\Pa]):\cdots:\sin(\tht_{p_r}[\Pa]-\tht_{p_{r-1}}[\Pa]): \sin(\tht_{p_1+n}[\Pa]-\tht_{p_r}[\Pa])\right).
\end{equation}
We would like to show that the vector on the right hand side is nonzero. Otherwise, by~\eqref{eq:tht_leq_dots_leq}, we would have $\tht_{p_s}[\Pa]=\cdots=\tht_{p_r}[\Pa]=\tht_{p_1+n}[\Pa]=\cdots=\tht_{p_{s-1}+n}[\Pa]$ for some $s\in[r]$. Let 
\begin{equation*}%
  S:=\{p\in\Z\mid \tht_p[\Pa]=\tht_{p_s}[\Pa]\}.
\end{equation*}
Thus $S$ is a convex subset of $\Pa$ containing all elements in $\sigma^{s-1}(\pch)=(p_s,\dots, p_r,p_1+n,\dots,p_{s-1}+n)$. It follows that $S$ splits as a disjoint union of tubes, all of which belong to $\HAT\setminus\{\Pa\}$. Because $\sigma^{s-1}(\pch)$ is a circular $\Pa$-chain, there exists a path in the Hasse diagram of $\Pa$ which starts at $p_s$, ends at $p_{s-1}+n$, and passes through all elements of $\sigma^{s-1}(\pch)$. For each vertex $p$ on this path, we see that $p\in S$ since $S$ is convex. Thus all elements of $\sigma^{s-1}(\pch)$ belong to the same proper tube $\t'\in \T$. This contradicts the minimality of $\t$. We have shown that the vector on the right hand side of~\eqref{eq:ratmap_Pa} is nonzero, thus $\sinmapb_\pch(\bth)$ is a well defined element of $\RPtnn^{r-1}$ when $\t=\Pa$.

Assume now that $\t\subsetneq\Pa$. Since $\t$ is convex, we may assume after some cyclic relabeling\footnote{Observe that the maps $\sinmap_{\pch}$ and $\sinmap_{\sigma(\pch)}$ are related by a cyclic shift on $\RP^{r-1}$.} that $p_1,p_2,\dots,p_r\in\t$, in which case we set
\begin{equation}\label{eq:ratmap_t}
\sinmapb_\pch(\bth):=\left((\tht_{p_2}[\t]-\tht_{p_1}[\t]):\cdots:(\tht_{p_r}[\t]-\tht_{p_{r-1}}[\t]) :(\tht_{p_r}[\t]-\tht_{p_1}[\t]) \right).
\end{equation}
The entries on the right hand side are nonnegative by~\eqref{eq:th_leq_dots_leq}. Similarly to the above, we see that they cannot all be zero because that would imply $\tht_{p_1}[\t]=\tht_{p_2}[\t]=\cdots=\tht_{p_r}[\t]$, contradicting the minimality of $\t$.

It remains to show that $\sinmapb_\pch$ is continuous. Let $\bth\pari$ be a sequence of elements of $\Cyc(\Pa)$ converging to $\bth$ as $m\to\infty$. By definition, this means that $\bth\pari[\t']$ converges to $\bth[\t']$ inside $\Ord(\t')$ for each non-singleton tube $\t'$. Without loss of generality, we may assume that all points $\bth\pari$ belong to $\D_{\T'}$ for some fixed $\T'\subseteq\T$. Let $\t'\in\HAT'$ be a minimal by inclusion tube containing the residues of $\pch$ modulo $n$. Then $\t\subseteq\t'$. If $\t=\t'$ then clearly $\sinmapb_\pch(\bth\pari)\to\sinmapb_\pch(\bth)$ as $m\to\infty$. If $\t\subsetneq\t'\subsetneq\Pa$ then the result follows from~\eqref{eq:coh}. Finally, if $\t\subsetneq \t'=\Pa$, we see that because $\t\in\T=\T(\bth)$, all coordinates of the vector on the right hand side of~\eqref{eq:ratmap_Pa} tend to zero. But since this vector is treated as an element of $\RP^{r-1}$, we may replace the sines by their arguments. For the last coordinate, we replace $\sin(\tht_{p_1+n}[\Pa]-\tht_{p_r}[\Pa])=\sin(\tht_{p_r}[\Pa]-\tht_{p_1}[\Pa])$ with $\tht_{p_r}[\Pa]-\tht_{p_1}[\Pa]$. Therefore the limit of $\sinmapb_\pch(\bth\pari)$ coincides with the limit of 
\begin{equation}\label{eq:tmp:ratmap1}
  \left((\tht\pari_{p_2}[\Pa]-\tht\pari_{p_1}[\Pa]):\cdots:(\tht\pari_{p_r}[\Pa]-\tht\pari_{p_{r-1}}[\Pa]) :(\tht\pari_{p_r}[\Pa]-\tht\pari_{p_1}[\Pa]) \right)
\end{equation}
as $m\to\infty$. By the coherence condition~\eqref{eq:coh} applied to $\tp:=\Pa$, the vector in~\eqref{eq:tmp:ratmap1} equals
\begin{equation}\label{eq:tmp:ratmap2}
  \left((\tht\pari_{p_2}[\t]-\tht\pari_{p_1}[\t]):\cdots:(\tht\pari_{p_r}[\t]-\tht\pari_{p_{r-1}}[\t]) :(\tht\pari_{p_r}[\t]-\tht\pari_{p_1}[\t]) \right).
\end{equation}
Since $\bth\pari[\t]\to \bth[\t]$ as $m\to\infty$, the vector in~\eqref{eq:tmp:ratmap2} converges to $\sinmapb_\pch(\bth)$.
\end{proof}

\subsection{From bounded affine permutations to affine posets}\label{sec:bap_to_poset}
Suppose that $f\in\Bkn$ is loopless and has a connected strand diagram. Let $\Paf$ be the $n$-periodic transitive closure of the relations $p\lpaf q\lpaf p+n$ whenever $1\leq p<q\leq n$ form an $f$-crossing. (Explicitly, $\lpaf$ is the transitive closure of the relations $p+dn\lpaf q+dn\lpaf p+(d+1)n$ for all $d\in\Z$.) It follows that $\Paf$ is an affine poset. See \cref{fig:strand} for an example.

\begin{figure}
\begin{tabular}{ccc}
\includegraphics[width=0.25\textwidth]{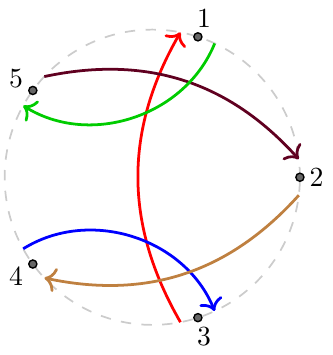}  & &
\includegraphics[width=0.1\textwidth]{fig_tnn/poset_circ}
 \\ \\
Strand diagram of $f\in S_n$ && Affine poset $\Paf$
\end{tabular}
  \caption{\label{fig:strand} Associating an affine poset $\Paf$ (right) to a strand diagram of a permutation $f\in S_n$ for $n=5$ (left). Figure reproduced from~\cite{crit_polyt}.}
\end{figure}

Comparing~\eqref{eq:Oao_Pa} to~\eqref{eq:THtp_dfn}, we see that the sets
\begin{equation*}%
  \Oao(\Paf)=\THtp_f
\end{equation*}
coincide as subsets of $\R^n$. As explained in \cref{rmk:interior_identify}, these spaces are identified with the interior $\Cyco(\Paf)$ of the corresponding affine poset cyclohedron.

\section{Taking the closure}
Suppose that $f\in\Bkn$ is loopless and has a connected strand diagram. Recall from \cref{sec:bap_to_poset} that $\THtp_f$ is naturally identified with the interior $\Cyco(\Paf)$. Thus we have a map
\begin{equation*}%
  \Measf: \Cyco(\Paf)\to \Ctp_f.
\end{equation*}
 Our goal is to show the following result.
\begin{theorem}\label{thm:Measf:Cyc_to_Ctnn}
  For any loopless $f\in\Bkn$, the map $\Measf$ extends to a surjective continuous map between the closures
\begin{equation}\label{eq:Measf:Cyc_to_Ctnn}
  \Measbf: \Cyc(\Paf)\to \Ctnn_f.
\end{equation}
\end{theorem}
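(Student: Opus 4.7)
The plan is to construct the continuous extension $\Measbf$ via a sequential argument, using Lemma~\ref{lemma:sin_map} as the key input. Fix a reduced contracted graph $G \in \Gred(f)$. Each interior edge $e$ of $G$ is labeled by a pair $\{p, q\}$ forming an $f$-crossing, equivalently, giving a covering relation in $\Paf$ (after shifting indices by multiples of $n$). The edge weight $\wt_\bth(e) = \sin(\th_q - \th_p)$ is exactly the quantity controlled by Lemma~\ref{lemma:sin_map} applied to the two-element circular $\Paf$-chain $\pch = (p,q)$: the ratio $\sin(\th_q - \th_p) : \sin(\th_{p+n} - \th_q)$ extends continuously to a map $\Cyc(\Paf) \to \RPtnn^1$.

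To define $\Measbf(\bth)$ at a boundary point $\bth \in \Cyc(\Paf)$ with associated tubing $\T = \T(\bth)$, I would proceed as follows. For each interior edge $e$ of $G$ labeled $\{p,q\}$, identify the minimal tube $\t_e \in \HAT$ whose residues mod $n$ contain $\{p,q\}$. The coherence condition~\eqref{eq:coh} then lets me express the ratio of $\wt_\bth(e)$ to a chosen tube-scale reference as a continuous function of $\bth[\t_e]$. Organizing edges according to the nesting depth of $\t_e$ and performing gauge transformations at the interior vertices of $G$ to absorb the tube-level scaling parameters, one obtains a nonnegative weight function $\wt'$ on $G$ with at least one almost perfect matching of positive weight. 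I would then set $\Measbf(\bth) := \Meas(G, \wt')$, and verify continuity by showing that for any sequence $\bth^{(m)} \to \bth$ in $\Cyco(\Paf)$, the projective Pl\"ucker vectors $\Meas(f, \bth^{(m)})$ converge to those of $\Meas(G, \wt')$, as continuous composites of the ratios from Lemma~\ref{lemma:sin_map}.

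The main technical obstacle is verifying that the gauge can be chosen coherently across nested tubes of vastly different scales, so that the resulting weight function is well-defined modulo gauge and so that at least one Pl\"ucker coordinate remains nonzero in the limit. This is where Lemma~\ref{lemma:sin_map} is essential: the lemma guarantees that $\sinmapb_\pch$ lands in $\RPtnn^{r-1}$ (never at the origin), which provides the nonvanishing needed after each normalization. When a limiting edge weight vanishes, the graph $G$ equipped with $\wt'$ may fail to be reduced, and one may need to invoke the reduction moves of Figure~\ref{fig:moves_red} (which preserve $\Meas$) to confirm that the limit point is unambiguous, as illustrated in Examples~\ref{ex:intro:1}--\ref{ex:intro:2}.

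Finally, surjectivity onto $\Ctnn_f$ follows formally once continuity is established: by compactness of $\Cyc(\Paf)$, the image $\Measbf(\Cyc(\Paf))$ is a closed subset of $\Grtnn(k,n)$ containing $\Ctp_f$, hence contains $\overline{\Ctp_f} = \Ctnn_f$; conversely, continuity together with density of $\Cyco(\Paf)$ in $\Cyc(\Paf)$ and the fact that $\Ctnn_f$ is closed forces the image into $\overline{\Ctp_f} = \Ctnn_f$.
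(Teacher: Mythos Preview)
Your surjectivity argument at the end is correct and matches the paper exactly. The overall architecture---define an extension via limiting edge weights, check continuity via Lemma~\ref{lemma:sin_map}, then use compactness---is also right. But there are two genuine gaps in the construction itself.

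First, the edge-by-edge use of Lemma~\ref{lemma:sin_map} on two-element chains $(p,q)$ does not directly give you what you need. The ratio $\sin(\th_q-\th_p):\sin(\th_{p+n}-\th_q)$ compares $\wt_\bth(e)$ to a quantity that is \emph{not} the weight of any edge sharing a vertex with $e$, so it is unclear how to assemble these into a single weight function modulo gauge. The paper avoids this by applying Lemma~\ref{lemma:sin_map} once per black interior vertex $b$, to the full circular $\Paf$-chain $\Neighb=(p_1,\dots,p_r)$ of strand labels at $b$ (Lemma~\ref{lemma:vertex_chain}). The coordinates of $\sinmapb_{\Neighb}(\bth)\in\RPtnn^{r-1}$ are exactly the weights of the edges incident to $b$, already packaged modulo the gauge transformation at $b$. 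No tube-depth bookkeeping or cross-vertex coherence argument is needed; the definition of $\Measbf$ (Definition~\ref{dfn:Measbf}) is then one line.

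Second, and more seriously, you assert without proof that the limiting weighted graph admits ``at least one almost perfect matching of positive weight.'' The nonvanishing in Lemma~\ref{lemma:sin_map} only says that at each black vertex not all incident edge weights vanish; it does \emph{not} say the resulting graph $G'$ (with zero-weight edges deleted) has an almost perfect matching. This is the content of Proposition~\ref{prop:at_least_one_apm}, and its proof is substantial: one first shows that $\T(\bth)$ can be extended by a maximal proper tube $\t'$, and then uses $\t'$ to construct an explicit almost perfect matching $\match$ of $G$ by selecting, at each interior vertex, the edge labeled by the two extreme strand labels relative to $\t'$. One must then verify both that $\match$ is a matching (via~\eqref{eq:match_iff}) and that every edge of $\match$ survives in $G'$, the latter requiring a careful case analysis on whether the minimal tube containing the chain is $\Paf$ or a proper tube. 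Your sketch does not supply any of this, and without it the map $\Measbf$ is not even well-defined at boundary points.
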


First, we describe a simple way to take a limit of a family of boundary measurements. See~\cite[Lemma~3.1]{PSW} for a closely related result.
\begin{lemma}\label{lemma:Gr_limit}
Let $G\in\Gred(f)$. Suppose that we are given a sequence $\wt\pari \in \R_{>0}^{E(G)}$, $m=1,2,\dots$, such that for each $e\in E(G)$, there exists a finite limit
\begin{equation*}%
  \wt(e):=\lim_{m\to\infty} \wt\pari(e) \quad  \in[0,\infty).
\end{equation*}
Let $G'$ be given by
\begin{equation*}%
  V(G'):=V(G),\quad E(G'):=\{e\in E(G)\mid \wt(e)>0\},
\end{equation*}
 and let $\wt'\in\Rtp^{E(G')}$ be the restriction of $\wt$ to $E(G')$. Then we have 
\begin{equation}\label{eq:Meas_converges_to_Meas'}
  \lim_{m\to\infty} \Meas(G,\wt\pari) = \Meas(G',\wt') \quad\text{inside $\Grtnn(k,n)$},
\end{equation}
provided that $G'$ admits at least one almost perfect matching.
\end{lemma}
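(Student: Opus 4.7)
The plan is to compute the limit coordinate by coordinate in the Plücker embedding and then observe that the hypothesis on $G'$ is exactly what is needed to get projective convergence.

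First I would unpack the definition of the boundary measurement. By~\eqref{eq:Meas_dfn}, for each $I\in\binom{[n]}{k}$ the coordinate $\Delta_I(G,\wt)$ is a polynomial in the edge weights with all coefficients equal to $1$, namely $\Delta_I(G,\wt)=\sum_{\Acal:\,\partial\Acal=I}\prod_{e\in\Acal}\wt(e)$. Since polynomials are continuous and $\wt\pari(e)\to\wt(e)$ for every $e\in E(G)$, we obtain $\lim_{m\to\infty}\Delta_I(G,\wt\pari)=\sum_{\Acal:\,\partial\Acal=I}\prod_{e\in\Acal}\wt(e)$, where matchings $\Acal$ that use any edge $e\notin E(G')$ contribute $0$ because $\wt(e)=0$ for such edges. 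Since $V(G')=V(G)$, an almost perfect matching of $G$ supported on $E(G')$ is the same thing as an almost perfect matching of $G'$; hence the limit equals $\Delta_I(G',\wt')$.

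Next I would deduce convergence in $\RP^{\binom{n}{k}-1}$ (and thus in $\Grtnn(k,n)$). Projective convergence $[\bv\pari]\to[\bv]$ holds whenever the affine representatives converge coordinatewise \emph{and} the limit representative is nonzero. By the previous paragraph we have coordinatewise convergence $\Delta_I(G,\wt\pari)\to\Delta_I(G',\wt')$ for every $I$. The hypothesis that $G'$ admits at least one almost perfect matching $\Acal_0$, with $\partial\Acal_0=:I_0$, gives $\Delta_{I_0}(G',\wt')\ge\wt'(\Acal_0)>0$, so the limit vector is nonzero. This establishes~\eqref{eq:Meas_converges_to_Meas'} as an equality in $\RP^{\binom{n}{k}-1}$. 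Finally, because $G'$ is a planar bipartite graph in a disk admitting an almost perfect matching, the results of Postnikov and Talaska (as quoted before~\cite[Theorem~4.1]{Lam}) ensure that $\Meas(G',\wt')$ is a genuine point of $\Grtnn(k,n)$, and convergence in $\RP^{\binom{n}{k}-1}$ restricts to convergence in $\Grtnn(k,n)$ since the latter is closed in the former.

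There is no real obstacle; the only subtle point is recognizing that the hypothesis ``$G'$ admits an almost perfect matching'' plays two roles at once: it guarantees that the coordinatewise limit vector $(\Delta_I(G',\wt'))_I$ is nonzero (so that the projective limit exists) and that $\Meas(G',\wt')$ is a well-defined element of $\Grtnn(k,n)$. Without this hypothesis both of these could fail simultaneously.
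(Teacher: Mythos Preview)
Your proof is correct and follows essentially the same approach as the paper: establish coordinatewise convergence in $\R^{\binom{n}{k}}$ by continuity of the polynomial expressions $\Delta_I$, then use the existence of an almost perfect matching of $G'$ to conclude that the limit vector is nonzero and hence the convergence holds projectively. The paper's version is simply more terse, asserting the coordinatewise limit as ``clear'' rather than spelling out the polynomial-continuity argument.
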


\begin{proof}
Clearly, we have 
\begin{equation}\label{eq:convergence_R}
  \lim_{m\to\infty} (\Delta_I(G,\wt\pari))_{I\in{[n]\choose k}}  = (\Delta_I(G',\wt'))_{I\in{[n]\choose k}} \quad\text{inside $\R^{n\choose k}$.}
\end{equation}
By construction, any almost perfect matching of $G'$ is an almost perfect matching of $G$. Since the set of such almost perfect matchings is nonempty, the right hand side of~\eqref{eq:convergence_R} is nonzero. Thus~\eqref{eq:convergence_R} also holds inside $\RP^{{n\choose k}-1}$. This implies~\eqref{eq:Meas_converges_to_Meas'}.
\end{proof}

\begin{remark}
We caution that if $G'$ admits no almost perfect matchings, the limit on the left hand side of~\eqref{eq:Meas_converges_to_Meas'} may still exist, since applying a gauge transformation to each $\wt\pari$ may give rise to a different graph $G'$ in the limit.
\end{remark}

Our next goal is to define the map $\Measbf$ in~\eqref{eq:Measf:Cyc_to_Ctnn}. We identify $\Cyc(\Paf)$ with $\Comp(\Paf)$ via \cref{thm:cyc_comp}. Fix $\bth\in\Cyc(\Paf)$ and let $\T:=\T(\bth)$ be the corresponding proper tubing. Choose a contracted graph $G\in\Gred(f)$. 
\begin{lemma}\label{lemma:vertex_chain}
Let $v\in V(G)$ be an interior vertex of $G$ of degree $r$, and let $1\leq p_1<p_2<\dots<p_r\leq n$ be the endpoints of the strands emanating from $v$. Then $(p_1,p_2,\dots, p_r)$ is a circular $\Paf$-chain.
\end{lemma}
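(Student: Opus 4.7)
My plan is to show that the sorted sequence of strand targets $(p_1, p_2, \ldots, p_r)$ at $v$ satisfies $p_i \lpaf p_{i+1}$ for every cyclic index $i$ (with $p_{r+1} := p_1 + n$), which is exactly the circular $\Paf$-chain condition. This will follow from two key facts, both consequences of the reduced and contracted hypotheses on $G$.

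I would first set up the local picture: let $e_1, \ldots, e_r$ be the edges incident to $v$ in counterclockwise order. The turn rule (sharp left at white, sharp right at black) implies that exactly $r$ strands pass through $v$, each using a consecutive pair of edges at $v$. Label the strand with target $p_i \in [n]$ by $S_i$, and set $s_i := \fb^{-1}(p_i)$. A first important observation is that the cyclic order in which the strands emanate from $v$ coincides with the cyclic order of the targets $p_i$ on the boundary circle: in a reduced plabic graph any two strands sharing the vertex $v$ cannot cross each other elsewhere (they would otherwise intersect twice in total, contradicting reducedness), so the half-strands from $v$ to the boundary are pairwise non-crossing and preserve the cyclic order. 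After a cyclic relabeling we may therefore assume the emanation order at $v$ matches the sorted order $p_1 < p_2 < \cdots < p_r$.

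The heart of the argument is to show that for each cyclic consecutive pair $S_i, S_{i+1}$ (which share the edge between them at $v$), the chords $[s_i, p_i]$ and $[s_{i+1}, p_{i+1}]$ interleave on the boundary, so that $(p_i, p_{i+1})$ forms an $f$-crossing. This is a local-to-global claim: at $v$ the two strands occupy adjacent ``corners'', and tracing them forward and backward along their shared edge, while again using the reduced hypothesis to exclude bigons, forces the four endpoints $s_i, p_i, s_{i+1}, p_{i+1}$ to alternate around the boundary circle. This gives $p_i \lpaf p_{i+1}$ directly from the definition of $f$-crossings, and hence the desired circular chain. The main obstacle will be establishing the interleaving rigorously; I expect this to require either a case analysis by vertex color (using the turn rule at each step) or an appeal to standard results on strand diagrams in reduced plabic graphs from Postnikov's theory, ultimately reducing to the fact that two strands which share a vertex in a reduced plabic graph necessarily have interleaving chord endpoints.
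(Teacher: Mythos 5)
Your proposal takes essentially the same route as the paper, which handles the lemma in two short steps and cites external results for both. Step one in the paper is that the edges at $v$ are labeled by $\{p_1,p_2\},\dots,\{p_{r-1},p_r\},\{p_r,p_1\}$ in cyclic order, attributed to the ``no bad double crossings'' condition (\cite[Theorem~13.2]{Pos}); this is the same underlying fact as your claim that the cyclic order of strand emanation at $v$ agrees with the cyclic order of the targets $p_i$. Step two is that each edge label $\{p,q\}$ forms an $f$-crossing, which the paper gets from \cite[Proposition~4.2]{crit}; this is your interleaving claim for the pair of strands sharing the edge between $S_i$ and $S_{i+1}$. So the architecture of your argument and the paper's coincide; what the paper does is cite these two facts rather than re-prove them.

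One imprecision worth flagging: you assert that ``any two strands sharing the vertex $v$ cannot cross each other elsewhere (they would otherwise intersect twice in total, contradicting reducedness).'' Merely sharing a vertex is not a crossing in the plabic sense; two strands cross when they share an \emph{edge} and pass through it in opposite directions, and the reducedness / no-bad-double-crossing condition constrains repeated crossings of that type, not mere shared vertices. What you actually need for the interleaving is that the two strands sharing the edge between consecutive corners at $v$ cross exactly once; this is the genuine content and is precisely what \cite[Proposition~4.2]{crit} packages. The cyclic-order claim also needs a slightly more careful argument than the one you sketch (e.g., via the topological ``no bad double crossings'' picture from \cite[Theorem~13.2]{Pos}), rather than the ``shared vertex implies no further crossing'' shortcut. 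Neither issue changes the correctness of the overall plan, but as written the two load-bearing substeps are asserted rather than established.
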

\begin{proof}
It is easy to see from the ``no bad double crossings'' condition on the strands~\cite[Theorem~13.2]{Pos} that the edges incident to $v$ are labeled by $\{p_1,p_2\},\dots,\{p_{r-1},p_r\},\{p_r,p_1\}$ in clockwise order. %
The result follows by~\cite[Proposition~4.2]{crit}.
\end{proof}
In the setting of the above lemma, we denote $\Neighv:=(p_1,p_2,\dots,p_r)$. Observe that the entries of $\sinmapb_{\Neighv}(\bth)$ are naturally labeled by $\{p_1,p_2\},\dots,\{p_{r-1},p_r\},\{p_r,p_1\}$; see~\eqref{eq:sin_map}. Thus we may treat the entries of $\sinmapb_{\Neighv}(\bth)$ as nonnegative real edge weights assigned to the edges incident to $v$. They form an element of $\RPtnn^{r-1}$ since rescaling them by a common positive scalar corresponds to a gauge transformation at $v$.
\begin{definition}\label{dfn:Measbf}
Let $\bth\in\Comp(\Paf)$. We define a weight function $\wt_\bth\in\Rtnn^{E(G)}$ as follows. For each boundary edge $e$, set $\wt_\bth(e):=1$. For each black interior vertex $b\in V(G)$, set the weights of the edges incident to $b$ to be proportional to the entries of $\sinmapb_{\Neighb}(\bth)$. Let $G'$ be given by
\begin{equation}\label{eq:Measbf_G'_dfn}
  V(G'):=V(G),\quad E(G'):=\{e\in E(G)\mid \wt_\bth(e)>0\},
\end{equation}
and let $\wt'_\bth$ be the restriction of $\wt_\bth$ to $E(G')$. Define
\begin{equation}\label{eq:Measbf_dfn}
  \Measbf(\bth):=\Meas(G',\wt'_\bth).
\end{equation}
\end{definition}
\noindent See \cref{fig:limit1,fig:limit2} for examples of weighted graphs $(G',\wt')$.

\begin{remark}\label{rmk:Measbf_extends}
For $\bth\in\THtp_f\cong \Oao(\Paf)$, we have $\Measbf(\bth)=\Measf(\bth)$ in view of~\eqref{eq:wt_bth} and \cref{lemma:sin_map}.
\end{remark}
\begin{remark}\label{rmk:independence}
The construction of $\Measbf$ in \cref{dfn:Measbf} formally depends on the choice of $G\in\Gred(f)$. However, we will see later that the choice of $G$ is immaterial: we will show that $\Measbf$ is a \emph{continuous} extension of $\Measf$ to $\Cyc(\Paf)$. If such a continuous extension exists, it must be unique, and thus any other choice of $G$ would give rise to the same map $\Measbf$.
\end{remark}

While the graph $G$ in \cref{dfn:Measbf} was assumed to be reduced and contracted, these properties need not hold for $G'$. But first, in order for~\eqref{eq:Measbf_dfn} to give a well-defined element of the Grassmannian, we must show that not all coordinates of the vector $\Meas(G',\wt'_\bth)$ are zero, which is equivalent to the following statement.
\begin{proposition}\label{prop:at_least_one_apm}
The graph $G'$ given by~\eqref{eq:Measbf_G'_dfn} admits at least one almost perfect matching.
\end{proposition}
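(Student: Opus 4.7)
The plan is to use a limiting argument combined with compactness of the totally nonnegative Grassmannian. Since $\Oao(\Paf)$ is dense in $\Comp(\Paf)\cong\Cyc(\Paf)$, I first choose a sequence $\bth\pari\in\Oao(\Paf)$ converging to $\bth$. For each $m$, the weights $\wt_{\bth\pari}$ are strictly positive on every edge of $G$, so the Plücker vector $(\Delta_I(G,\wt_{\bth\pari}))_I$ represents the point $\Meas(G,\wt_{\bth\pari})=\Measf(\bth\pari)\in\Ctp_f\subseteq\Grtnn(k,n)$.

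Next I renormalize using gauge transformations. At each interior black vertex $b$ with strand neighborhood $\Neighb=(p_1,\dots,p_r)$ (a circular $\Paf$-chain by \cref{lemma:vertex_chain}), I apply a gauge transformation with scalar $\la_b\pari>0$ so that the maximum weight on the $r$ edges incident to $b$ equals $1$. By the continuity of $\sinmapb_{\Neighb}$ (\cref{lemma:sin_map}) and the fact that $\sinmapb_{\Neighb}(\bth)\in\RPtnn^{r-1}$ has a positive entry, the normalized weights $\wt'\pari(e)\in[0,1]$ converge along a subsequence to a pointwise limit $\wt^*(e)$ whose support is exactly $E(G')$. By compactness of $\Grtnn(k,n)$, I pass to a further subsequence along which $\Measf(\bth\pari)\to X\in\Grtnn(k,n)$, where $X$ is a nonzero projective point. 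By continuity of the polynomial expressions $\Delta_I$ in the weights, in $\R^{\binom{n}{k}}$ we have
\begin{equation*}
(\Delta_I(G,\wt'\pari))_I\ \longrightarrow\ (\Delta_I(G,\wt^*))_I\ =\ (\Delta_I(G',\wt^*|_{E(G')}))_I,
\end{equation*}
since any almost perfect matching using an edge with $\wt^*=0$ contributes $0$ in the limit.

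The main obstacle is ruling out the degenerate scenario in which the $\R^{\binom{n}{k}}$-limit is the zero vector while the projective limit $X$ is still nonzero; in that case the argument above would not force $G'$ to admit an apm. I would resolve this by combining the black-vertex gauge above with additional gauge transformations at the interior white vertices, with factors $\mu_w\pari$ chosen so that the combined gauge normalizes $\max_I\Delta_I(G,\wt'\pari)=1$ for every $m$. This is possible because gauging at interior vertices rescales all Plückers by a single positive factor $\prod_v\la_v\pari$, which can absorb the arbitrary overall scaling; the distribution of the factor among the white vertices is chosen via a subsequence argument so that pointwise convergence of $\wt'\pari(e)$ on edges incident to black vertices is preserved. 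Under this combined normalization the Plücker vector lies in a compact set of max-norm $1$, so any subsequential limit is nonzero, and (being a limit of the same expressions) must coincide with $(\Delta_I(G',\wt^*|_{E(G')}))_I$ up to a positive scalar. Therefore some $\Delta_J(G',\wt^*|_{E(G')})>0$, and an apm $\Acal$ of $G'$ with $\partial\Acal=J$ provides the desired almost perfect matching.
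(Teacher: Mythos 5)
Your approach is genuinely different from the paper's and unfortunately has a gap at its crucial step. The paper gives a \emph{combinatorial} construction: it first shows (via a counting argument on the tree structure of a vertex $\Df_{\T'}$ of $\Cyc(\Paf)$) that there is a maximal proper tube $\t'$ with $\T(\bth)\cup\{\t'\}$ a tubing, then uses $\t'$ to \emph{explicitly} pick one edge $e_v$ at each interior vertex $v$ (the edge labeled $\{p_1,p_r\}$ when $\Neighv$ is written with all $p_1,\dots,p_r\in\t'$), verifies this collection is an almost perfect matching of $G$, and finally shows directly that $\wt_\bth(e_b)>0$ for every black interior $b$ by contradiction against the tubing $\T\cup\{\t'\}$. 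No limits or compactness enter; the statement is purely about the combinatorics of tubings.

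Your argument, by contrast, is analytic, and the step you yourself flag as ``the main obstacle'' is exactly where it breaks. You want to renormalize by white-vertex gauges so that $\max_I\Delta_I(G,\wt''\pari)=1$ while simultaneously preserving pointwise convergence of the edge weights to a limit $\wt^*$ supported on $E(G')$. These two requirements are in conflict precisely in the case the proposition is trying to rule out. Concretely: if $G'$ admits no almost perfect matching, then every apm of $G$ uses at least one edge where $\wt^*=0$, so $(\Delta_I(G,\wt'\pari))_I\to 0$ in $\R^{\binom{n}{k}}$ under the black-gauge normalization. To restore $\max_I\Delta_I=1$ you then need the white gauge factors $\mu_w\pari$ to blow up, which destroys boundedness of the weights on edges incident to those white vertices; a subsequential limit of the rescaled weights $\wt''\pari$ would then have a \emph{different} support than $E(G')$ (this is exactly the scenario described in the paper's Remark after \cref{lemma:Gr_limit}). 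So the identity ``the nonzero subsequential Plücker limit coincides with $(\Delta_I(G',\wt^*|_{E(G')}))_I$ up to positive scalar'' is not justified by a subsequence argument; it is essentially equivalent to the statement you are trying to prove, making the argument circular. The proof really does seem to need a combinatorial input producing an explicit apm, which is what the maximal proper tube supplies in the paper.
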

\begin{proof}
Recall that we have set $\T:=\T(\bth)$. Our first goal is to show that there exists a maximal proper tube $\t'$ such that $\T\cup\{\t'\}$ is a tubing.

The tubing $\T$ corresponds to a face $\DfT$ of $\Cyc(\Paf)$. Let $\Df_{\T'}$ be any vertex of the closed face $\Closure(\DfT)$. Thus $\T\subseteq\T'$ and $\Df_{\T'}$ is a zero-dimensional face, which means $|\EQ[\T']|=n-1$. We claim that any proper tubing $\T'$ satisfying $|\EQ[\T']|=n-1$ contains a maximal proper tube.

 To see this, consider a rooted tree $T'$ (cf.~\cite[Definition~3.5]{crit_polyt}) with vertex set $\{\Paf\}\sqcup\EQ[\T']\sqcup \Z/n\Z$, where $\Z/n\Z$ is identified with the set of equivalence classes of singleton tubes. (We identify the set of singleton tubes with $\Z$.) The root of $T'$ is $\Paf$, while $\Z/n\Z$ is the set of leaves of $T'$. The children of each $\eq[\t]\in\{\Paf\}\sqcup \EQ[\T']$ are of the form $\eq[\tm]$ where $\tm$ is a maximal by inclusion element of $\T'\sqcup \Z$ satisfying $\tm\subsetneq \t$. We find that $T'$ has $2n$ vertices, including $n$ leaves. Moreover, each non-leaf vertex of $T'$ other than $\Paf$ has at least two children. Since a binary tree on $n$ leaves contains $2n-1$ vertices, it follows that the root $\Paf$ has exactly one child in $T'$. In other words, $\T'$ contains a maximal proper tube $\t'$. Thus $\T\cup\{\t'\}$ is contained in a tubing $\T'$, and therefore is itself a tubing.

We now construct an almost perfect matching $\match$ of $G$. Let $v$ be a (black or white) interior vertex of $G$. Since $\t'$ is a maximal proper tube, it contains the residues of $\Neighv$ modulo $n$, and we let $p_1,p_2,\dots,p_r\in\t'$ be such that $(p_1,p_2,\dots,p_r)=\sigma^s(\Neighv)$ for some $s\in\Z$. Thus the strands emanating from $v$ are labeled by $p_1,p_2,\dots,p_r$ in clockwise order, where we consider their labels modulo $n$.

 We see that $v$ is incident to an edge $e_v$ labeled by $\{p_1,p_r\}$. Set
\begin{equation*}%
  \match:=\{e_v\mid \text{$v$ is an interior vertex of $G$}\}.
\end{equation*}
Thus $\match$ is a collection of edges of $G$ covering each interior vertex at least once. 

Let $b$ (resp., $w$) be a black (resp., white) interior vertex of $G$. We claim that 
\begin{equation}\label{eq:match_iff}
  \text{$e_b$ connects $b$ to $w$ \quad $\Longleftrightarrow$ \quad $e_w$ connects $b$ to $w$.}
\end{equation}
Suppose that $e_b$ connects $b$ to $w$. Label the strands emanating from $b$ (resp., from $w$) by $p_1,p_2,\dots,p_r\in\t'$ (resp., $q_1,q_2,\dots,q_s\in\t'$) in clockwise order. Thus $e_b$ is labeled by $\{p_1,p_r\}$ while $e_w$ is labeled by $\{q_1,q_s\}$. Since $w$ is also incident to the edge $e_b$ labeled by $\{p_1,p_r\}$, we see that $p_1,p_r\in \{q_1,q_2,\dots,q_s\}$, and moreover, $p_r$ appears right before $p_1$ in the sequence $(q_1,q_2,\dots,q_s,q_1)$.
 It follows that $q_1=p_1$ and $q_s=p_r$, therefore $e_w=e_b$. The converse direction is handled similarly, except that for a white interior vertex $w$, $e_w$ may be a boundary edge (in which case there is no black interior vertex $b$ satisfying $e_b=e_w$).

It follows from~\eqref{eq:match_iff} that $\match$ is an almost perfect matching of $G$. It remains to show that $\match$ is an almost perfect matching of $G'$. Recall that $V(G')=V(G)$. Let $b$ be a black interior vertex of $G$ with outgoing strands labeled by $p_1,p_2,\dots,p_r\in\t'$. Thus the edge $e_b\in\match$ is labeled by $\{p_1,p_r\}$. Our goal is to show that the $r$-th entry $y_r$ of $\by:=\sinmapb_{(p_1,p_2,\dots,p_r)}(\bth)$ is nonzero. By \cref{lemma:sin_map}, the entries of  $\by$ are not all zero. 

 Let $\t\in\HAT$ be a minimal by inclusion tube containing the residues of $(p_1,p_2,\dots,p_r)$ modulo $n$. We first consider the case $\t=\Paf$. By~\eqref{eq:ratmap_Pa}, we have $y_r=\sin(\tht_{p_1+n}[\t]-\tht_{p_r}[\t]).$ 
Thus by~\eqref{eq:tht_leq_dots_leq}, $y_r=0$ implies that either $\tht_{p_r}[\t]=\tht_{p_1}[\t]$ or $\tht_{p_r}[\t]=\tht_{p_1}[\t]+\pi$. In the former case, the vector $\by$ would be zero, a contradiction. Thus assume $\tht_{p_r}[\t]=\tht_{p_1}[\t]+\pi$. Let 
\begin{equation*}%
  S:=\{p\in \Z\mid \tht_p[\t]=\tht_{p_r}[\t]\}.
\end{equation*}
We see that $S$ is a convex subset of $\Paf$ containing both $p_r$ and $p_1+n$. Recall that $S$ is a disjoint union of tubes. Since $p_r\lpaf p_1+n$, these two elements belong to the same connected component $\tm$ of $S$. By \cref{dfn:T(bth)}, we must have $\tm\in\T$. This is a contradiction: $\T\cup\{\t'\}$ is a tubing, however, the tubes $\tm,\t'\in\T\cup\{\t'\}$ are neither nested nor disjoint. We have shown that $y_r\neq0$ when $\t=\Paf$.

Assume now that $\t\subsetneq\Paf$ is a proper tube. By choosing a particular representative in $\eq[\t]$, we may assume that $p_1\in\t$. Since any two tubes in $\T\cup\{\t'\}$ are either nested or disjoint, and since $p_1\in\t\cap\t'$, we must have $\t\subseteq\t'$. (Because $|\t'|=n$, we cannot have $\t'\subsetneq \t$.) It follows that $p_1,p_2,\dots,p_r\in\t$. Since $\t\subsetneq\Paf$, $\by$ is given by~\eqref{eq:ratmap_t}. In particular, $y_r=\tht_{p_r}[\t]-\tht_{p_1}[\t]$. By~\eqref{eq:tht_leq_dots_leq}, $y_r=0$ implies $\by=0$, a contradiction.
\end{proof}

\begin{proof}[Proof of \cref{thm:Measf:Cyc_to_Ctnn}]
By \cref{prop:at_least_one_apm}, the map $\Measbf$ lands inside $\Gr(k,n)$. By \cref{rmk:Measbf_extends}, it extends the map $\Measf$ to $\Cyc(\Paf)$. Next, we show that it is continuous.

Let $(\bth\pari)_{m\geq1}$ be a sequence of points in $\Cyc(\Paf)$ converging to $\bth\in\Cyc(\Paf)$ as $m\to\infty$. Let $b$ be a black interior vertex of $G$ of degree $r$. By \cref{lemma:sin_map}, the map $\sinmapb_{\Neighb}$ is continuous on $\Cyc(\Paf)$:
\begin{equation*}%
  \lim_{m\to\infty} \sinmapb_{\Neighb}(\bth\pari)=\sinmapb_{\Neighb}(\bth) \quad\text{inside $\RP^{r-1}$}.
\end{equation*}
Thus, after applying gauge transformations to each $\wt_{\bth\pari}$ at black interior vertices, we get
\begin{equation*}%
  \lim_{m\to\infty} \wt_{\bth\pari}(e)=\wt_{\bth}(e) \quad\text{for all $e\in E(G)$}.
\end{equation*}
(Recall that the weight of each boundary edge $e$ is not affected by gauge transformations at black interior vertices, and satisfies $\wt_{\bth\pari}(e)=\wt_{\bth}(e)=1$ for all $m$.) By \cref{prop:at_least_one_apm}, $G'$ admits an almost perfect matching, therefore $\Measbf$ is continuous by \cref{lemma:Gr_limit}.

It remains to show that $\Measbf(\Cyc(\Paf))=\Ctnn_f$. We see that the image of $\Measbf$ is compact (since $\Cyc(\Paf)$ is compact) and thus closed. Since the image contains $\Ctp_f=\Measf(\Cyco(\Paf))$, it contains the closure $\Ctnn_f$ of $\Ctp_f$. On the other hand, $\Measbf(\Cyc(\Paf))$ must be contained inside $\Ctnn_f$ because $\Cyc(\Paf)$ is the closure of $\Cyco(\Paf)$.
\end{proof}

\begin{definition}\label{dfn:strat}
We endow $\Ctnn_f$ with a stratification obtained by taking the common refinement of the images of  all open faces of $\Cyc(\Paf)$. 
\end{definition}

\begin{conjecture}
For any two open faces $\DfT,\Df_{\T'}$ of $\Cyc(\Paf)$, their images under $\Measbf$ either coincide or are disjoint.
\end{conjecture}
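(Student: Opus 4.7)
The plan is to associate a bounded affine permutation $f_\T$ to each proper tubing $\T$ (allowing loops and coloops), and to show that $\Measbf(\DfT)$ is contained in the positroid cell $\Ptp_{f_\T}$. Combined with the fact that positroid cells for distinct bounded affine permutations are disjoint, this gives disjointness of images whenever $f_\T \ne f_{\T'}$, leaving only the case $f_\T = f_{\T'}$ to be analyzed.

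To define $f_\T$, I would fix a contracted graph $G \in \Gred(f)$, and for $\bth \in \DfT$ form the graph $G'$ of \cref{dfn:Measbf}. By \cref{lemma:sin_map} together with formulas~\eqref{eq:ratmap_Pa}--\eqref{eq:ratmap_t} from its proof, the vanishing pattern of $\wt_\bth$, and hence the edge set of $G'$, depends only on $\T$. I would then apply reduction moves (\cref{fig:moves_red}) and contraction-uncontraction moves (\cref{fig:moves_cs}) to simplify $G'$ to a reduced contracted graph $G_\T$, and set $f_\T := f_{G_\T}$. Independence of the choice of $G \in \Gred(f)$ follows because any two such graphs are connected by square moves, each of which transfers to a corresponding sequence of moves on $G_\T$ that preserves $f_{G_\T}$.

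The technical core of the argument is to verify that the critical weight parametrization $\wt_\bth$ on $G'$ reduces, under these moves, to a critical-type weight parametrization $\wt_{\btht}$ on $G_\T$ for some $\btht \in \Oao(\Pa_{f_\T})$ (interpreted appropriately when $f_\T$ has loops or coloops). This requires checking that each reduction move in \cref{fig:moves_red} transforms critical weights into critical weights --- a combinatorial identity analogous to the ``independence of infinitesimal ratios'' phenomenon appearing in the proof of \cref{thm:intro:hyp2n} (cf. \cref{ex:intro:2}), relying on the coherence condition~\eqref{eq:coh} together with standard sine addition identities. Granting this, $\Measbf(\bth) = \Measop_{f_\T}(\btht) \in \Ctp_{f_\T}$, and the remaining equality-or-disjointness question for two tubings with $f_\T = f_{\T'}$ reduces to showing that the induced parametrizations $\DfT \to \Ctp_{f_\T}$ and $\Df_{\T'} \to \Ctp_{f_{\T'}}$ have the same image.

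The main obstacle is this last step: even once $f_\T = f_{\T'}$ is established, it is not automatic that the two parametrizations fill out the same subset of $\Ctp_{f_\T}$. In the top-cell case $f = \fkn$, the homeomorphism $\mapDC \colon \Delta_{2,n} \xrasim \Ctnnkn$ of \cref{thm:intro:hyp2n} resolves this via a polytopal model, but for general $f$ no analogous model is currently known. Establishing equality of images would likely require either a direct surjectivity argument onto the expected critical cell, or a generalization of the ``independence of infinitesimal ratios'' phenomenon beyond the top cell; as noted at the end of \cref{sec:intro:main}, the latter is tied to the injectivity conjecture \cite[Conjecture 4.3]{crit}, which remains wide open. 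This is the essential reason the statement is offered here as a conjecture rather than a theorem.
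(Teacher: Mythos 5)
The statement in question is stated in the paper as a conjecture and is not proved there in general; it is established only in the top-cell case $f=\fkn$, as a consequence of \cref{thm:intro:hyp2n}. Your write-up reflects this honestly, and the first half of your plan is essentially what the paper carries out for $f=\fkn$: there $f_\T$ is the bounded affine permutation $\gbth=g_{\BB_\bth}$ of \cref{lemma:gBB}, and \cref{prop:graph_reduction} together with~\eqref{eq:weights} is exactly the verification that $(G',\wt'_\bth)$ reduces to a graph in $\Gred(\gbth)$ carrying a critical-type weight function, so that $\Measbkn(\DfT)\subseteq\Ptp_{\gbth}$. You also correctly locate the real gap, namely the case $f_\T=f_{\T'}$. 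In the top-cell case the paper disposes of it by factoring $\Measbkn$ through $\mapCD\colon\Cyc(\Pakn)\twoheadrightarrow\Delta_{2,n}$ and proving $\mapDC$ is a homeomorphism (which in turn hinges on an injectivity argument extending \cite[Theorem~4.4]{crit} to the closure); no analogous polytopal model is currently available for other $f$, which is precisely why the statement remains a conjecture.

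Two cautions on the details. First, you float a generalization of ``independence of infinitesimal ratios'' to lower cells as a possible route, but the paper explicitly warns at the end of \cref{sec:intro:main} that this phenomenon ``is very special to the top cell, and does not appear to hold for lower cells,'' so that door is likely closed. Second, even the step you are ``granting'' is substantial: showing that the reduction of $G'$ is a \emph{reduced} graph requires matching $\ell(f_\T)$ against the number of faces destroyed during the moves (this face count is the bulk of \cref{prop:graph_reduction}), and showing that the resulting weights are critical for $\Pa_{f_\T}$ requires handling the factorization into connected components when $f_\T$ has loops, coloops, or fixed strands. Neither is a formal consequence of the coherence condition~\eqref{eq:coh} plus sine addition identities alone. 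With these caveats, your assessment of what is known and what is missing is accurate and tracks the paper's own discussion.
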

\noindent Below we prove this conjecture for $f=\fkn$. %

\section{Top cell and the second hypersimplex}
We concentrate on the case of the top cell ($f=\fkn$), where $2\leq k\leq n-1$. We denote $\Ctnnkn:=\Ctnn_{\fkn}$, $\Pakn:=\Pa_{\fkn}$, etc. Note that $\Cyc(\Pakn)\cong\Cyc_n$ is just the standard $(n-1)$-dimensional cyclohedron of~\cite{BoTa,Simion}. Our goal is to prove \cref{thm:intro:hyp2n}.

\subsection{\texorpdfstring{From $\Cyc(\Pakn)$ to $\Delta_{2,n}$}{From the cyclohedron to the second hypersimplex}}
Recall from \cref{thm:Measf:Cyc_to_Ctnn} that $\Ctnn_{k,n}$ is the image of the cyclohedron $\Cyc(\Pakn)$ under the map $\Measbkn:\Cyc(\Pakn)\to\Ctnnkn$. Our first goal is to introduce a map $\mapCD:\Cyc(\Pakn)\to\Delta_{2,n}$ to the second hypersimplex and to show that $\Measbkn$ factors through $\mapCD$.

We start with a few preliminary observations and definitions. 

\begin{notation}\label{notn:cyclic}
 For $a,b\in\Z$ with $a\leq b$, we set $[a,b):=\{a,a+1,\dots,b-1\}$. For $a,b\in[n]$, we introduce a \emph{cyclic interval} $[a,b):=\{a,a+1,\dots,b-1\}$ if $a\leq b$ and $[a,b):=\{a,a+1,\dots,n,1,\dots,b-1\}$ if $a>b$. The intervals $(a,b],[a,b]\subseteq\Z$ (for $a\leq b$) and cyclic intervals $(a,b],[a,b]\subseteq[n]$ (for $a,b\in[n]$) are defined analogously.
\end{notation}
\begin{definition}
An \emph{inscribed polygon} (resp., \emph{degenerate inscribed polygon}) is a polygon all of whose vertices lie on a single circle (resp., on a single line).
\end{definition}
We view (degenerate) inscribed polygons modulo transformations that preserve the ratios of the distances between their vertices. We write $R=(v_1,v_2,\dots,v_m)$ for a polygon with vertices $v_1,v_2,\dots,v_m$ given in cyclic order. The following result is well known.
\begin{lemma}\label{lemma:polygon}
Let $(a_1,a_2,\dots,a_m)\in\Rtnn^m$ be such that $a_p\leq \sum_{q\neq p} a_q$ for all $p\in[m]$. Then there exists a unique possibly degenerate inscribed polygon $R=(v_1,v_2,\dots,v_m)$ such that 
\begin{equation*}%
  \frac{|v_{p+1}-v_p|}{|v_{q+1}-v_q|}=\frac{a_p}{a_q} \quad\text{for all $p,q\in[m]$,}
\end{equation*}
where we set $v_{m+1}:=v_1$.\qed
\end{lemma}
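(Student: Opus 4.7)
The plan is to parameterize inscribed polygons with prescribed side-length ratios by their circumradius $R$ and establish existence and uniqueness via continuity and monotonicity. Normalize so that the side lengths are exactly $a_1,\dots,a_m$, set $a_M:=\max_p a_p$, and note that the chord condition forces $R\ge a_M/2$. In a circle of radius $R$, a chord of length $a_p$ subtends a central angle $2\alpha_p$ with $\sin\alpha_p=a_p/(2R)$ and $\alpha_p\in(0,\pi)$, and the polygon closes (going once around the circle) iff $\sum_p\alpha_p=\pi$. Since each $\alpha_p>0$ and their sum is $\pi$, at most one $\alpha_p$ lies in $(\pi/2,\pi)$, and if one does it must correspond to the longest chord $a_M$.

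I would handle two cases. \emph{Case (i)} (circumcenter inside the polygon): take $\alpha_p=\arcsin(a_p/(2R))$ for all $p$; the function $S(R):=\sum_p\arcsin(a_p/(2R))$ is continuous and strictly decreasing, with $S(\infty)=0$ and $S(a_M/2)=\pi/2+\sum_{p\ne M}\arcsin(a_p/a_M)$. If $S(a_M/2)\ge\pi$, the intermediate value theorem yields a unique $R\in[a_M/2,\infty)$ with $S(R)=\pi$. \emph{Case (ii)} (circumcenter outside the polygon): take $\alpha_M=\pi-\arcsin(a_M/(2R))$; closure becomes $T(R):=\arcsin(a_M/(2R))-\sum_{p\ne M}\arcsin(a_p/(2R))=0$. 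When Case (i) fails, $T(a_M/2)=\pi/2-\sum_{p\ne M}\arcsin(a_p/a_M)>0$, while the Taylor expansion $\arcsin(x)=x+x^3/6+O(x^5)$ shows $T(R)=(a_M-\sum_{p\ne M}a_p)/(2R)+O(R^{-3})$, which is $\le 0$ as $R\to\infty$ by the polygon inequality. Continuity produces a zero, and one verifies that $T$ has the shape ``decrease then increase'' (its derivative, after a short computation, has exactly one sign change on $(a_M/2,\infty)$), giving uniqueness.

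In the degenerate case $a_M=\sum_{p\ne M}a_p$, the linear term in the expansion above vanishes and the cubic term $\tfrac{1}{48R^3}(a_M^3-\sum_{p\ne M}a_p^3)$ is strictly positive by convexity of $x\mapsto x^3$, so $T>0$ throughout $(a_M/2,\infty)$ and the equation $T(R)=0$ is achieved only in the limit $R\to\infty$. Here the polygon collapses to collinear points; writing $v_{p+1}-v_p=\epsilon_p a_p$ with $\epsilon_p\in\{\pm 1\}$, the cyclic constraint $\sum_p\epsilon_p a_p=0$ together with $a_p<a_M$ for $p\ne M$ forces $\epsilon_M=-\epsilon_p$ uniquely, determining the degenerate polygon up to translation and reflection. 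Once $R$ is determined in either case, the polygon is reconstructed by placing vertices at angles $\theta_p=\sum_{q<p}2\alpha_q$ on the circle of radius $R$, and uniqueness up to similarity is immediate from the explicit construction.

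The main technical hurdle is the case split between circumcenter-inside and circumcenter-outside configurations, along with verifying uniqueness of the zero of $T$ in the latter. Both steps are elementary but require careful handling of the arcsine branches; the degenerate sub-case in particular requires the sharper cubic expansion to pin down the sign of $T$.
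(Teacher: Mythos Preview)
The paper does not actually prove this lemma: it is stated as ``well known'' and closed immediately with a \qed. So there is no paper proof to compare against; you have supplied a full argument where the paper supplies none.

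Your approach---parameterizing by the circumradius $R$, writing the closure condition as $\sum_p \alpha_p=\pi$ with $\sin\alpha_p=a_p/(2R)$, and splitting into the ``all half-angles acute'' case versus the ``one obtuse half-angle on the longest side'' case---is the standard argument for this classical fact about cyclic polygons. The existence part via the intermediate value theorem is clean, and your treatment of the degenerate boundary $a_M=\sum_{p\ne M}a_p$ via the cubic term is correct.

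Two points are genuinely only sketched and would need a line or two more to be airtight. First, you should check that Cases (i) and (ii) are mutually exclusive: when $S(a_M/2)\ge\pi$ so that Case~(i) has a solution, you need to rule out a second solution coming from Case~(ii), i.e., show $T$ has no zero on $(a_M/2,\infty)$ in that regime. Second, the assertion that $T'$ has ``exactly one sign change'' is plausible but not immediate; the second derivative of $\tilde T(u)=\arcsin(a_M u)-\sum_{p\ne M}\arcsin(a_p u)$ does not have a fixed sign in general, so one needs a slightly sharper argument (for instance, analyzing the ratio of the two terms in $T'$, or invoking a known reference on cyclic polygons). You also implicitly assume $a_M$ is uniquely attained; when there are ties for the maximum, a short argument shows Case~(ii) is impossible, forcing Case~(i). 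None of these are serious obstacles, and the overall plan is sound.
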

\noindent Thus, up to a common scalar, the diagonals of a possibly degenerate inscribed polygon may be reconstructed from its sides.

Next, observe that the order $\leqpakn$ coincides with the usual total order $\leq$ on $\Z$. In particular, $\pchn:=(1,2,\dots,n)$ is a circular $\Pakn$-chain. By \cref{lemma:sin_map}, we therefore have a continuous map
\begin{equation*}%
  \sinmapb_\pchn:\Cyc(\Pakn)\to \RPtnn^{n-1}.
\end{equation*}

\begin{definition}
Let $\bth\in\Cyc(\Pakn)$. We denote by $\BB_\bth$ the partition of $\Z$ into intervals consisting of the tubes in $\tubes(\bth[\t])$ for each minimal by inclusion $\t\in\HAT(\bth)$ satisfying $|\t|=n$. 
\end{definition}
In the above definition, either $\t=\Pakn$ or $\t$ is a maximal proper tube, which in the case of $\Pakn$ is just an interval of the form $[p,p+n)\subseteq\Z$ for some $p\in\Z$. Thus $\tubes(\bth[\t])$ forms a partition of $\t$ into intervals. Considering $\BB_\bth$ modulo $n$, we get a partition $\BBmod_\bth=(\Bmod_1,\Bmod_2,\dots,\Bmod_m)$ of $[n]$ into $m\geq2$ nonempty cyclic intervals.

\begin{remark}\label{rmk:polygons_k_n}
Recall from \cref{rmk:exp_simplex} that for $\bth\in \Cyco(\Pakn)$,  setting $v_r:=\exp(2i\th_r)$ for $r\in[n]$ gives $n$ distinct points $v_1,v_2,\dots,v_n$ on the unit circle ordered counterclockwise. The map $\sinmap_\pchn$ in this case records the side length ratios of the $n$-gon $R=(v_1,v_2,\dots,v_n)$. When we pass to the boundary ($\bth\in\Cyc(\Pakn)$), some of these points will collide. If not all points collide then the cyclic intervals in $\BBmod_\bth=(\Bmod_1,\Bmod_2,\dots,\Bmod_m)$ record precisely the groups of collided points, and $\sinmapb_\pchn(\bth)$ records the side length ratios of the corresponding $m$-gon. If all points collide then $\T(\bth)$ contains a maximal proper tube $\t$. In this case, $\bth[\t]$ records the positions of $n$ points on a line, the cyclic intervals in $\BBmod_\bth$ record which groups of those points collided together, and $\sinmapb_{\pch}(\bth)$ records the side length ratios of the corresponding degenerate inscribed $m$-gon.
\end{remark}

Consider a map 
\begin{equation*}%
  \divsum: \RPtnn^{n-1}\to \Rtnn^n,\quad (x_1:x_2:\cdots:x_n)\mapsto \frac2{x_1+x_2+\cdots+x_n}(x_1,x_2,\dots,x_n).
\end{equation*}
We note that the entries of an element of $\RPtnn^{n-1}$ are nonnegative and at least one of them is nonzero, thus their sum is strictly positive. The image of $\divsum$ belongs to the subspace of $\Rtnn^n$ where the sum of coordinates is equal to $2$. Let
\begin{equation*}%
  \mapCD: \Cyc(\Pakn)\to \Rtnn^n,\quad \mapCD:=\divsum\circ \sinmapb_{\pchn}.
\end{equation*}
\begin{proposition}\label{prop:delta2n}
The image of the map $\mapCD$ equals 
\begin{equation*}%
  \Delta_{2,n}=\{(y_1,y_2,\dots,y_n)\in[0,1]^n\mid y_1+y_2+\cdots+y_n=2\}.
\end{equation*}
\end{proposition}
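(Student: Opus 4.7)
The plan is to prove the equality by a two-sided inclusion, leveraging the geometric interpretation of $\sinmapb_{\pchn}$ as encoding side lengths of inscribed polygons (\cref{rmk:polygons_k_n}). First I would establish that $\mapCD=\divsum\circ\sinmapb_{\pchn}$ is continuous (the first factor by \cref{lemma:sin_map}, the second by inspection, since the denominator $\sum_i x_i$ is strictly positive on $\RPtnn^{n-1}$). Since $\Cyc(\Pakn)$ is compact, $\mapCD(\Cyc(\Pakn))$ is automatically closed in $\Rtnn^n$, which reduces the reverse inclusion to a density argument.

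For the inclusion $\mapCD(\Cyc(\Pakn))\subseteq\Delta_{2,n}$, I would fix $\bth\in\Cyc(\Pakn)$, set $\by:=\mapCD(\bth)$, and observe that $\divsum$ immediately gives $y_p\ge 0$ and $\sum_p y_p=2$; the remaining content is $y_p\le 1$, equivalently $y_p\le\sum_{q\ne p}y_q$. By \cref{rmk:polygons_k_n}, the unnormalized entries of $\sinmapb_{\pchn}(\bth)$ are proportional to the side lengths $a_p$ of a closed (possibly degenerate) inscribed polygon $(v_1,\ldots,v_n)$, with vertex collisions allowed when $\bth$ is on the boundary of $\Cyc(\Pakn)$. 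The tautological identity $\sum_p(v_{p+1}-v_p)=0$ combined with the triangle inequality yields the polygon inequality $a_p\le\sum_{q\ne p}a_q$, and normalization gives $y_p\le 1$.

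For the reverse inclusion, I would hit the relative interior $\Delta_{2,n}^\circ:=\{\by\in\Delta_{2,n}\mid 0<y_p<1 \text{ for all } p\}$ by explicit construction, then close up. Given $\by\in\Delta_{2,n}^\circ$, the strict polygon inequality $y_p<\sum_{q\ne p}y_q$ holds, so \cref{lemma:polygon} supplies a non-degenerate inscribed $n$-gon $(v_1,\ldots,v_n)$ with side ratios $y_1:\cdots:y_n$. After rescaling to the unit circle, I would choose counterclockwise arguments $0=\alpha_1<\alpha_2<\cdots<\alpha_n<2\pi$ with $v_r$ at angle $\alpha_r$, set $\th_r:=\alpha_r/2\in[0,\pi)$, and observe that the resulting $\bth$ lies in $\Oao(\Pakn)=\Cyco(\Pakn)$ and satisfies $\mapCD(\bth)=\by$. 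Hence $\Delta_{2,n}^\circ\subseteq\mapCD(\Cyco(\Pakn))$, and since $\mapCD(\Cyc(\Pakn))$ is closed, it contains $\overline{\Delta_{2,n}^\circ}=\Delta_{2,n}$.

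The one delicate point is confirming, via \cref{lemma:polygon} together with strict polygon inequality, that the realizing polygon is non-degenerate for $\by\in\Delta_{2,n}^\circ$; this is what allows placing its vertices as $n$ distinct points on the unit circle rather than on a line. Everything else — the continuity and compactness setup, the polygon inequality, and the closure argument — is routine packaging around the geometric content of \cref{rmk:polygons_k_n}.
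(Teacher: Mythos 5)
Your proposal is correct and follows essentially the same route as the paper: reduce to the polygon-side-length description from Remark~\ref{rmk:polygons_k_n}, identify the image of the open cell $\Cyco(\Pakn)$ with $\Delta_{2,n}^\circ$ via the (strict) polygon inequalities and Lemma~\ref{lemma:polygon}, and pass to the closure using compactness and continuity. The paper's write-up is terser but rests on exactly the same two facts you articulate, and your "delicate point" about non-degeneracy under strict inequalities is indeed exactly the uniqueness content built into Lemma~\ref{lemma:polygon}.
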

\begin{proof}
By \cref{rmk:polygons_k_n}, the map $\sinmap_{\pchn}$ records the side length ratios of an inscribed $n$-gon, and thus its image is described by triangle inequalities:
\begin{equation*}%
  \sinmap_{\pchn}(\Cyco(\Pakn))=\{(x_1:x_2:\cdots:x_n)\in\RPtnn^{n-1}\mid 0< x_p< \sum_{q\neq p} x_q\text{ for each $p\in[n]$}\}.
\end{equation*}
Observe that $0<x_p< \sum_{q\neq p} x_q$ is equivalent to $0<2x_p< \sum_{q=1}^n x_q$. Substituting $y_p:=\frac{2x_p}{x_1+x_2+\cdots+x_n}$, we get
\begin{equation*}%
  \mapCD(\Cyco(\Pakn))=\{(y_1,y_2,\dots,y_n)\in\R^n\mid 0<y_p<1\text{ for each $p\in[n]$ and }y_1+y_2+\cdots+y_n=2\}.
\end{equation*}
The result follows by taking the closure.
\end{proof}

\subsection{\texorpdfstring{From $\Delta_{2,n}$ to $\Ctnnkn$}{From the second hypersimplex to the totally nonnegative critical variety}}\label{sec:DC}
The goal of this section is to prove the following result.
\begin{theorem}\label{thm:map_factors} 
There exists a continuous map
\begin{equation*}%
  \mapDC:\Delta_{2,n}\to\Ctnnkn
\end{equation*}
making the diagram~\eqref{eq:intro:map_factors} commutative.
\end{theorem}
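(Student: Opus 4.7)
The plan is to exploit that $\mapCD$ is a continuous surjection from the compact space $\Cyc(\Pakn)$ onto the Hausdorff polytope $\Delta_{2,n}$ (by \cref{lemma:sin_map} and \cref{prop:delta2n}), and hence a quotient map. Both the existence and continuity of $\mapDC$ therefore reduce to the single statement that $\Measbkn$ is constant on the fibers of $\mapCD$: granting this, $\mapDC(\by):=\Measbkn(\bth)$ for any $\bth\in\mapCD^{-1}(\by)$ is well-defined, and continuity is automatic by the quotient-map property.

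The heart of the proof is thus the \emph{independence of infinitesimal ratios}: if $\mapCD(\bth)=\mapCD(\bth')$, then $\Measbkn(\bth)=\Measbkn(\bth')$. Note that $\by:=\mapCD(\bth)$ records the side-length ratios of the (possibly degenerate) inscribed $m$-gon described in \cref{rmk:polygons_k_n}, together with the cyclic partition $\BBmod_\bth$ of $[n]$ recoverable from the zero pattern of $\by$. Within each cluster $\Bmod_j$ of size at least two, $\bth$ carries additional infinitesimal data recorded in $\bth[\tube]$ for subtubes $\tube$ supported in $\Bmod_j$, and one must show these data do not affect the boundary measurement.

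To establish this invariance, I would fix a contracted reduced graph $G\in\Gred(\fkn)$, form the limiting weighted graph $(G',\wt_\bth)$ of \cref{dfn:Measbf}, and transform it via reduction moves (\cref{fig:moves_red}) and contraction/square moves (\cref{fig:moves_cs}) into a weighted graph depending only on $\by$. Black interior vertices of $G$ whose strand endpoints lie in pairwise distinct clusters contribute weights given by side lengths of sub-polygons of the $m$-gon, hence depend only on $\by$. The remaining vertices generate within-cluster subpatches of $G'$; these must collapse under reduction moves, with any residual infinitesimal weights factoring uniformly out of every Pl\"ucker coordinate as a common positive scalar and therefore vanishing projectively. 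The prototypical cancellation is illustrated in \cref{ex:intro:2} and \cref{fig:limit2}.

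The main obstacle will be proving this cluster-by-cluster reduction in full generality. A promising route is induction on $m=|\BBmod_\bth|$: the case $m=n$ (all clusters singletons) is immediate, while the case $m=1$ (total collision into a single degenerate $n$-gon on a line) follows because $\by$ then captures the entire configuration, leaving no within-cluster data to eliminate. The intermediate cases should follow from a local factorization identifying each cluster-internal subpatch with a top-cell degeneration in a smaller Grassmannian whose dimer partition function splits off uniformly from each Pl\"ucker coordinate $\Delta_I(G',\wt_\bth)$. Making this factorization rigorous---identifying the correct subpatches in $G'$, applying moves in the correct order, and verifying uniformity of the residual scalar across all $I\in {[n]\choose k}$---is the principal technical content to be carried out.
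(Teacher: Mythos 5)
Your topological framing is exactly the paper's: since $\Cyc(\Pakn)$ is compact and $\Delta_{2,n}$ is Hausdorff, the surjection $\mapCD$ is a quotient map, so both existence and continuity of $\mapDC$ reduce to showing that $\Measbkn$ is constant on the fibers of $\mapCD$. You have also correctly located the genuine content of the theorem in the ``independence of infinitesimal ratios,'' and your description of what must happen combinatorially (weights between distinct clusters come from the $m$-gon, within-cluster data must cancel under reduction moves) is accurate at the level of intuition.

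However, you stop precisely where the real work begins, and the route you sketch would not go through without substantial modification. First, the suggested induction on $m = |\BBmod_\bth|$ does not match the structure of the problem: there is no natural way to reduce from $m$ clusters to $m+1$ clusters inside the Grassmannian, and the ``local factorization into a smaller top cell'' you propose has no clear meaning, since a cluster's internal data does not parametrize a top-cell piece of a smaller Grassmannian. Second, your picture of each cluster contributing its own subpatch with infinitesimal weights that ``factor uniformly out'' of every Pl\"ucker coordinate misses a key simplification that the paper exploits: working with the Le-diagram graph $\Gkn$ and normalizing (via a cyclic shift) so that $k$ and $k+1$ lie in different intervals of $\BB_\bth$, \cref{lemma:type1} shows that \emph{at most one} cluster (the ``special'' one, containing both $n$ and $n+1$) can produce a subpatch carrying within-cluster ratio data; every other cluster's internal structure merely deletes edges when passing from $G$ to $G'$. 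The paper then handles that single special rectangle of type-(1) vertices by an explicit, inductive sequence of parallel-edge reductions and contraction moves, counting faces to verify that the result $G''$ lies in $\Gred(\gbth)$ and has weights determined by $\mapCD(\bth)$ alone. This is the content of \cref{prop:graph_reduction}, and it is the heart of the theorem; in your proposal it remains an unproved claim. So the proposal correctly reduces the theorem to the right statement but has a genuine gap where the paper's main technical proposition should be.
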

Thus, \cref{thm:intro:hyp2n} consists of \cref{thm:map_factors} together with the statement that the map $\mapDC$ is a homeomorphism, which we prove in \cref{sec:inj}.

Let $\bth\in\Cyc(\Pakn)$. Since $\Measbkn(\bth)\in\Grtnn(k,n)$, it must belong to some positroid cell $\Povtp_{g}$, $g\in\Bkn$. We will see later (\cref{prop:graph_reduction}) that the bounded affine permutation $g$ has the following description. For a subset $A\subseteq\Z$ and $p\in\Z$, we let $A+p:=\{a+p\mid a\in A\}$. By an \emph{$n$-periodic interval partition of $\Z$} we mean a collection $\BB$ of disjoint nonempty intervals in $\Z$ of size strictly less than $n$ such that their union is $\Z$ and for each interval $\B\in\BB$, we have $\B+dn\in\BB$ for all $d\in\Z$.
\begin{lemma}\label{lemma:gBB}
For any $n$-periodic interval partition $\BB$ of $\Z$, there exists a unique loopless $\gBB\in\Bkn$ of maximal length such that $\gBB(\B-k)=\B$ for all $\B\in\BB$.
\end{lemma}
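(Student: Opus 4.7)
The plan is to encode $\gBB$ block-by-block. By $n$-periodicity, $\gBB$ is determined by its restrictions $\gBB|_{\B-k}\colon \B-k\to\B$ for a single representative $\B$ from each equivalence class in $\BB$ modulo $n$-translation. The strategy is to decouple the length of $\gBB$ into independent within-block inversion counts, and then argue that each block admits a unique inversion-maximizing bijection.

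The first step is to verify that all affine inversions occur within a single block of $\BB-k$. If $p<q$ lie in distinct blocks $\B_1\neq\B_2$ of $\BB-k$, then, since $\BB-k$ partitions $\Z$ into intervals, one has $\B_1<\B_2$ as intervals; consequently $\gBB(p)\in\B_1+k$ and $\gBB(q)\in\B_2+k$ satisfy $\gBB(p)<\gBB(q)$, contributing no inversion. Thus $\ell(\gBB)$ splits as a sum, over equivalence classes of blocks, of within-block inversion counts, reducing the maximization to a separate optimization on each block.

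For a fixed block $\B=[a,b)$ of size $r<n$, I would parameterize $\gBB|_{\B-k}$ by a permutation $\sigma\in S_r$ on $\{0,\dots,r-1\}$ via $\gBB(a-k+i)=a+\sigma(i)$. A direct translation shows that the boundedness $j\leq\gBB(j)\leq j+n$ together with looplessness $\gBB(j)\neq j$ is equivalent to
\[ \max(0,i-k+1)\leq\sigma(i)\leq\min(r-1,i-k+n)\qquad\text{for all }i\in[0,r), \]
and that inversions of $\gBB|_{\B-k}$ correspond to inversions of $\sigma$. Let $\Sigma$ denote the resulting constraint set; $\Sigma$ is nonempty since it contains the identity. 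The problem thus reduces to the combinatorial claim: \emph{$\Sigma$ has a unique inversion-maximizing element}. This is the step I expect to be the main obstacle.

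To prove the combinatorial claim, I would exploit the monotonicity of both bounds in $i$. An exchange argument shows that any $\sigma\in\Sigma$ differing from a specific ``canonical'' maximizer admits an adjacent transposition that lands in $\Sigma$ and strictly increases inversions, hence is not maximal. The canonical maximizer itself is produced greedily: one fills positions in a suitable order, placing the most extreme feasible value subject to Hall-type extendability on the remaining positions. Assembling the resulting $\sigma_\B$ across equivalence classes of blocks yields a single $n$-periodic, bounded, loopless bijection $\gBB\colon\Z\to\Z$; the sum condition $\sum_{j=1}^n(\gBB(j)-j)=kn$ then follows automatically from the block-level identity $\sum_{j\in\B-k}(\gBB(j)-j)=k\lvert\B\rvert$ summed over a fundamental domain of total size $n$.
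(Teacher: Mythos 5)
Your reduction to block-level optimization is correct and cleanly organized: you verify that no affine inversion can occur across two distinct blocks of $\BB-k$, so $\ell(\gBB)$ decouples as a sum of within-block inversion counts, and the sum condition $\sum_{j=1}^n(\gBB(j)-j)=kn$ is automatic from $\gBB(\B-k)=\B$. The translation of boundedness plus looplessness into the constraint $\max(0,i-k+1)\leq\sigma(i)\leq\min(r-1,i-k+n)$ is also right.

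The problem is that the heart of the lemma --- that the constraint set $\Sigma$ contains a unique inversion-maximizing permutation --- is exactly the step you flag as ``the main obstacle,'' and you do not actually prove it. The adjacent-transposition exchange argument, as sketched, does not go through as stated: if $\sigma(i)<\sigma(i+1)$, the swap landing back in $\Sigma$ requires $\sigma(i+1)\leq U_i$ and $\sigma(i)\geq L_{i+1}$, and both can fail precisely at the ``boundary'' positions where $U_{i+1}=U_i+1$ (so $\sigma(i+1)$ may equal $U_{i+1}>U_i$) or $L_{i+1}=L_i+1$ (so $\sigma(i)$ may equal $L_i<L_{i+1}$). So the blanket claim that any $\sigma\neq\sigma^*$ admits an inversion-increasing swap inside $\Sigma$ needs a real argument, and the ``canonical'' $\sigma^*$ is never specified beyond ``produced greedily subject to Hall-type extendability.'' In other words, the proposal reformulates the lemma correctly but leaves the key existence-and-uniqueness assertion essentially as an unproved claim.

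By contrast, the paper's proof does give the construction explicitly: it partitions $\A=\B-k$ into $\A_L\sqcup\A_M\sqcup\A_R$ using the overlap numbers $\ovl(\A,\B)=|(\A+n)\cap\B|$ and $\ovr(\A,\B)=|(\A+1)\cap\B|$, sends $\A_L$ up by $n$, $\A_R$ up by $1$, and is order-reversing on $\A_M$; it then checks membership in $\Bkn$ by exhibiting $\gBB$ as a product of affine simple transpositions applied to $\fkn$. The paper's justification of maximality is also terse, but crucially it produces the explicit permutation, which is what the rest of the paper (see~\eqref{eq:BB_aff_inv} and the surrounding face-count argument) actually uses. To bring your proof up to the same standard you would need, at a minimum, to write down the greedy maximizer $\sigma^*$ concretely (it should coincide with the paper's formula in the $\sigma$-coordinates) and then prove the exchange claim carefully, handling the boundary cases above.
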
 
\begin{figure}
\includegraphics[width=1.0\textwidth]{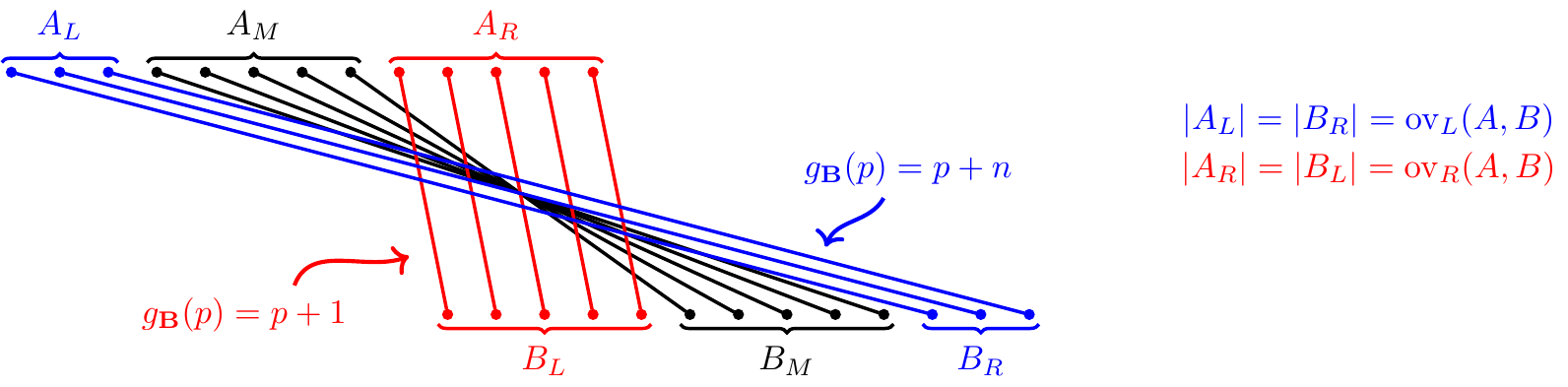} 
  \caption{\label{fig:gb_perm} The permutation $\gBB$ described in \cref{lemma:gBB}.}
\end{figure}
\begin{proof}
We describe $\gBB$ explicitly; see \cref{fig:gb_perm}. Let $\B\in\BB$ and denote $\A:=\B-k$. Let
\begin{equation}\label{eq:ovl_ovr}
  \ovl(\A,\B):=|(\A+n)\cap \B| \quad\text{and}\quad \ovr(\A,\B):=|(\A+1)\cap \B|.
\end{equation}
We have $\ovl(\A,\B)+\ovr(\A,\B)\leq|\A|=|\B|<n$. Let $\A_L$ consist of the smallest $\ovl(\A,\B)$ elements of $\A$, let $\A_R$ consist of the largest $\ovr(\A,\B)$ elements of $\A$, and let $\A_M$ consist of the remaining elements of $\A$. Thus we have a partition $\A=\A_L\sqcup \A_M\sqcup \A_R$ into intervals. Next, we partition $\B=\B_L\sqcup \B_M\sqcup \B_R$ into intervals given by $\B_L=\A_R+1$ and $\B_R:=\A_L+n$. For $p\in \A_R$, we let $\gBB(p):=p+1\in\B_L$, and for $p\in\A_L$, we let $\gBB(p):=p+n\in\B_R$. The restriction of $\gBB$ to $\A_M$ is an order reversing bijection $\A_M\to\B_M$. This ensures that $\gBB$ has maximal possible length among all loopless bounded affine permutations sending $A$ to $B$. It is also clear that $\gBB\in\Bkn$ (as opposed to $\Bound(k',n)$ for some $k'\neq k$) since it can be obtained from $\fkn$ by applying (affine) simple transpositions.
\end{proof}

Recall from \cref{rmk:independence} that any choice of a graph $G\in\Gred(\fkn)$ gives rise to the same map $\Measbkn$. We will take advantage of this observation by using a particular graph $\Gkn\in\Gred(\fkn)$ called the \emph{Le-diagram graph}; see \figref{fig:Le}(a) for an example and~\cite[Section~20]{Pos} for background. 

\begin{notation}\label{notn:strands}
All interior vertices of $\Gkn$ have degree either $2$ or $3$. Each interior vertex $v$ belongs to one horizontal strand directed east, one vertical strand directed south, and one diagonal strand directed northwest; see \figref{fig:Le}(b). We denote the endpoints of these strands by $\E(v),\S(v),\NW(v)\in[n]$, respectively. If a black vertex $b$ has degree $2$ then we have $\NW(b)=\S(b)$. If a white vertex $w$ has degree $2$ then we have $\NW(w)=\E(w)$. We denote by $\VB$ the set of black interior vertices of $\Gkn$.%
\end{notation}
\noindent Thus each $b\in\VB$ is uniquely determined by $\E(b)$ and $\S(b)$, which are its vertical and horizontal coordinates in the plane.

\begin{figure}
\begin{tabular}{cc}
\includegraphics[width=0.45\textwidth]{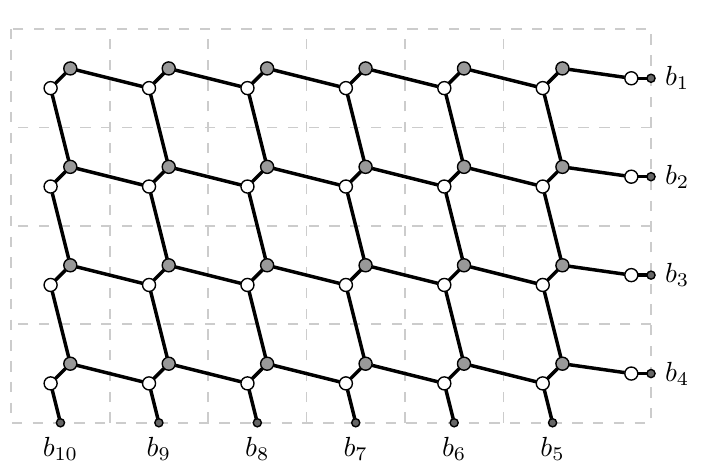} 
&
\includegraphics[width=0.45\textwidth]{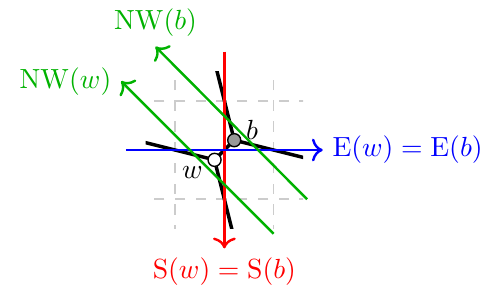} 
\\
(a) Le-diagram graph $\Gkn$ for $k=4$, $n=10$ & (b) Strands in $\Gkn$; see \cref{notn:strands}
\end{tabular}
  \caption{\label{fig:Le} A Le-diagram graph and its strands.}
\end{figure}

 After a cyclic shift, we may assume that
\begin{equation}\label{eq:BB_different}
  \text{$k$ and $k+1$ belong to different intervals in $\BB_\bth$.} 
\end{equation}
\begin{definition}\label{dfn:special}
An interval $\B\in\BB_\bth$ is called \emph{special} if it contains both $n$ and $n+1$. 
We also refer to the corresponding cyclic interval $\Bmod\in\BBmod_\bth$ as \emph{special}.
\end{definition}
\noindent Clearly, $\BB_\bth$ contains at most one special interval. 

Next, we consider the weighted graph $(G',\wt')$ obtained from $\Gkn$ via \cref{dfn:Measbf}. 

\begin{definition}
We say that $b\in \VB$ is of \emph{\type(1)} (resp., \emph{\type(2)} or \emph{\type(3)}) if the endpoints of the strands emanating from $b$ belong to exactly one (resp., two or three) distinct cyclic intervals in $\BBmod_\bth$.
\end{definition}

\begin{remark}\label{rmk:types}
 If $b$ is of \type(3), all three edges of $b$ are present in $G'$. Their weights coincide with their weights in $G$, and can be computed from $\mapCD(\bth)$; cf. \cref{rmk:polygons_k_n} and \cref{lemma:polygon}. If $b$ is of \type(2), only two edges of $b$ are present in $G'$. Their weights are equal, and after a gauge transformation at $b$, can be made equal to $1$. Finally, if $b$ is of \type(1), either two or three edges of $b$ are present in $G'$, and their weights cannot in general be computed from $\mapCD(\bth)$.  See e.g. \cref{fig:limit2,fig:Le_hv_rect}.
\end{remark}
\begin{lemma}\label{lemma:type1}
If $\BBmod_\bth$ does not contain a special  cyclic interval (in the sense of \cref{dfn:special}) then $\VB$ contains no vertices of \type(1). If $\BBmod_\bth$ contains a special cyclic interval $\Bmod$ then for each $b\in\VB$, $b$ is of \type(1) if and only if $\S(b),\E(b)\in\Bmod$.
\end{lemma}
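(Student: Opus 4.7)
The plan is to reduce the claim to a direct cyclic-interval analysis, after pinning down the locations of the strand endpoints $\E(b), \S(b), \NW(b)$ at each $b \in \VB$. The key structural input, which I would extract from the construction of the Le-diagram graph $\Gkn$ in \cite[Section~20]{Pos} together with \cref{notn:strands}, is that after the cyclic shift arranged in \eqref{eq:BB_different}, the labels $1,\dots,k$ sit on the east side of the rectangle and $k+1,\dots,n$ on the south side, so that
\begin{equation}\label{eq:plan_positions}
  \E(b) \in [1,k], \quad \S(b) \in [k+1,n], \quad \NW(b) \in [1, \E(b)-1] \cup [\S(b)+1, n].
\end{equation}
The containment for $\NW(b)$ is the crucial piece: since the NW strand at $b$ heads northwest, it must exit either on the east side strictly above $b$'s row or on the south side strictly to the left of $b$'s column. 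Equivalently, by \cref{lemma:vertex_chain}, the circular $\Pakn$-chain at $b$ has cyclic order $(\E(b), \S(b), \NW(b))$, with $\NW(b)$ lying on the arc from $\S(b)$ to $\E(b)$ through the wrap-around point between $n$ and $1$.

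Granting \eqref{eq:plan_positions}, the heart of the argument is the observation that $\E(b) \leq k < k+1 \leq \S(b)$, so any cyclic interval $I \in \BBmod_\bth$ containing both $\E(b)$ and $\S(b)$ must either contain the interval $[\E(b), \S(b)] \ni k, k+1$ (if $I$ is a ``non-wrap'' arc of $[n]$) or wrap around from $n$ to $1$ (if $I$ is special in the sense of \cref{dfn:special}). By \eqref{eq:BB_different}, the first alternative is forbidden, and a cyclic partition of $[n]$ contains at most one special interval. Hence $\S(b), \E(b)$ share a cyclic interval in $\BBmod_\bth$ if and only if $\BBmod_\bth$ has a (unique) special interval $\Bmod$ and $\S(b), \E(b) \in \Bmod$. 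This immediately yields the first half of the lemma: without a special interval, no two of the three endpoints can share a cyclic interval, so no vertex is of \type(1).

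For the second half, assume $\Bmod \in \BBmod_\bth$ is special. If $b$ is of \type(1), the three endpoints share a common interval in $\BBmod_\bth$, which by the dichotomy above must equal $\Bmod$, so in particular $\S(b), \E(b) \in \Bmod$. Conversely, suppose $\S(b), \E(b) \in \Bmod$, and write $\Bmod = \{a, a+1, \ldots, n, 1, 2, \ldots, d\}$ with $a \leq \S(b)$ and $d \geq \E(b)$. Then
\begin{equation*}
  [1, \E(b)-1] \subseteq [1, d] \subseteq \Bmod \quad \text{and} \quad [\S(b)+1, n] \subseteq [a, n] \subseteq \Bmod,
\end{equation*}
so \eqref{eq:plan_positions} forces $\NW(b) \in \Bmod$, showing $b$ is of \type(1). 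The degree-$2$ case is subsumed: by \cref{notn:strands}, $\NW(b) = \S(b)$, and the same dichotomy decides whether the two distinct endpoints $\E(b), \S(b)$ lie in the same cyclic interval.

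The main obstacle is verifying the containment $\NW(b) \in [1, \E(b)-1] \cup [\S(b)+1, n]$ in \eqref{eq:plan_positions}. I expect this to reduce to a short check against the definition of strands in $\Gkn$: the NW-directed strand at $b$ cannot exit on the east side at a row below $b$ nor on the south side at a column to the right of $b$, since both options point south or east of $b$ rather than northwest. Combined with \cref{lemma:vertex_chain}, which constrains the three endpoints to form a circular $\Pakn$-chain, this pins down the cyclic position of $\NW(b)$ relative to $\E(b)$ and $\S(b)$ and closes the argument.
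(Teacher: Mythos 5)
Your proof is correct and follows essentially the same route as the paper: both arguments hinge on $\E(b)\in[k]$, $\S(b)\in[k+1,n]$, use \eqref{eq:BB_different} to rule out a non-wrapping interval containing both, and for the converse use the explicit form $\Bmod=[n-h+1,n]\sqcup[v]$ to force $\NW(b)\in\Bmod$. The only difference is that you make explicit the containment $\NW(b)\in[1,\E(b)-1]\cup[\S(b)+1,n]$, which the paper's proof leaves implicit.
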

\begin{proof}
In order for $b\in\VB$ to be of \type(1), $\S(b),\E(b),\NW(b)$ must belong to some cyclic interval $\Bmod\in\BBmod_\bth$. But since $\S(b)\in[k+1,n]$ and $\E(b)\in[k]$, $\Bmod$ must be special in view of~\eqref{eq:BB_different}. Conversely, suppose that $\Bmod\in\BB_\bth$ is special and $\S(b),\E(b)\in\Bmod$. Since $\Bmod$ is of the form $[n-h+1,n]\sqcup [v]$ for some $v\in[k]$ and $h\in[n-k]$,  $\S(b),\E(b)\in\Bmod$ implies $\NW(b)\in\Bmod$.
\end{proof}
\noindent Thus the set of \type(1) vertices forms a top left justified $h\times v$ rectangle in $\Gkn$; see \cref{fig:Le_hv_rect} for an example.

\begin{figure}
  \resizebox{1.0\textwidth}{!}{%
\begin{tabular}{cc}
\includegraphics[width=0.5\textwidth]{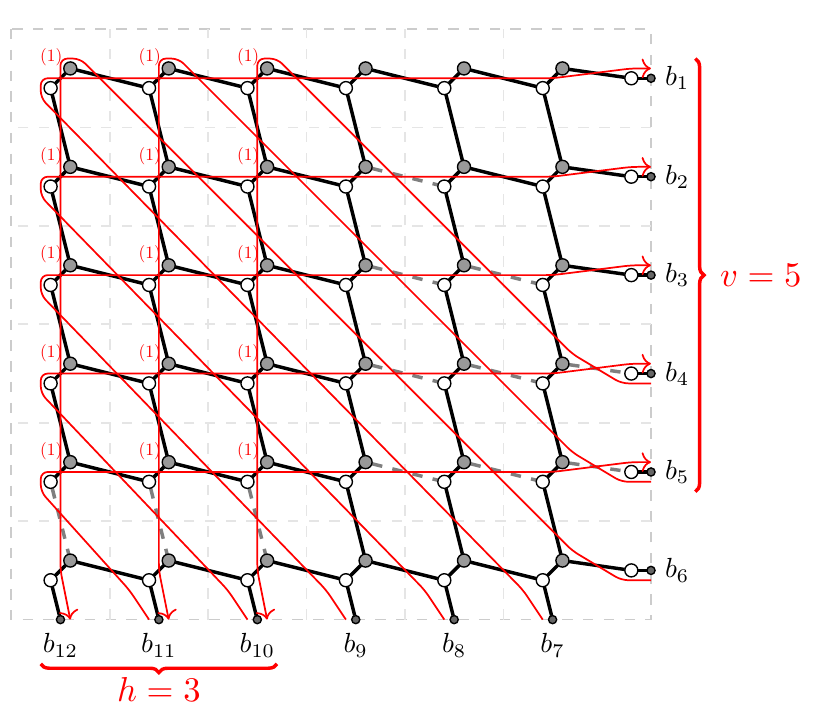} & 
\includegraphics[width=0.5\textwidth]{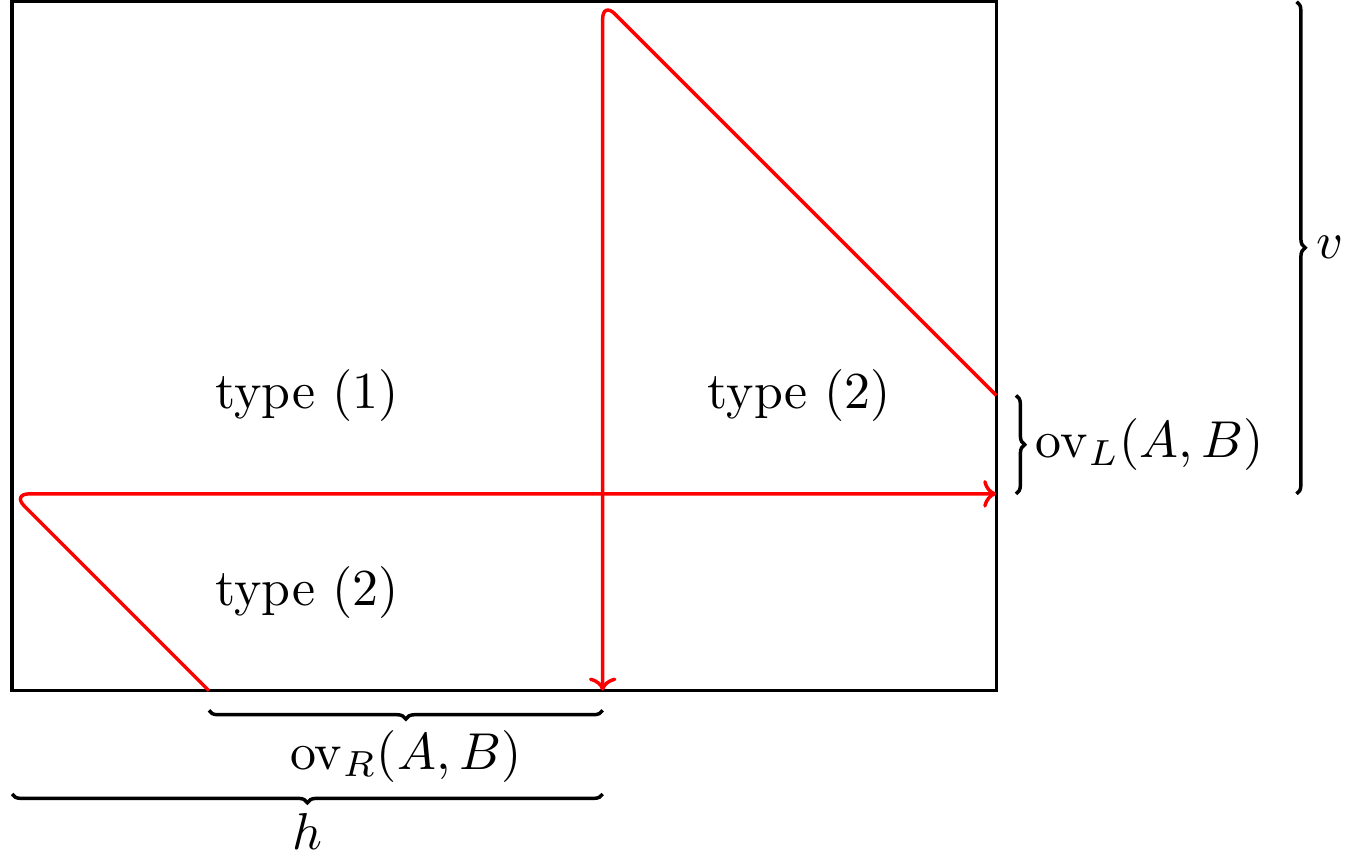} 
\end{tabular}
}
  \caption{\label{fig:Le_hv_rect} Left: an example for \cref{lemma:type1}. Here $k=6$, $n=12$, and $\BBmod_\bth$ contains a special cyclic interval $\Bmod=[10,5]=[10,12]\sqcup[1,5]$. The strands terminating in $\Bmod$ are shown in red. Type (1) black vertices are marked by (1). Dashed edges are present in $G$ but not in $G'$. Each of them is incident to a black vertex of \type(2). Right: regions of $\Gkn$ containing vertices of types (1) and (2) with respect to the special region $\Bmod$.}
\end{figure}

For the next result, we need to refer explicitly to the edges of $\Gkn$. Each black vertex $b\in\VB$ of degree $3$ is incident to a \emph{northern}, \emph{eastern}, and \emph{southwestern} edge labeled by $\{\S(b),\NW(b)\}$, $\{\E(b),\NW(b)\}$, and $\{\S(b),\E(b)\}$, respectively; see \figref{fig:Le}(b). Recall from~\eqref{eq:BB_different} that $k$ and $k+1$ cannot both belong to the special interval in $\BBmod_\bth$. We let $\gbth:=g_{\BB_\bth}$ be given by \cref{lemma:gBB}. Let us say that a \emph{self-loop} is an edge of a graph connecting a vertex to itself.

\begin{proposition}\label{prop:graph_reduction}
If $\BBmod_\bth$ does not contain a special cyclic interval then set $G'':=G'$. Otherwise, let $\Bmod$ be the special cyclic interval of $\BBmod_\bth$, and let $G''$ be obtained from $G'$ in one of the following two ways:
\begin{itemize}
\item (if $k\notin\Bmod$) remove all black vertices of \type(1) and their southwestern white neighbors; %
\item (if $k+1\notin\Bmod$) contract all edges incident to black vertices of \types(1,2) and remove all self-loops in the resulting graph.
\end{itemize}
 Let $\wt''_\bth$ be the restriction of $\wt'_\bth$ to the edges of $G''$. Then
\begin{equation}\label{eq:G''_red}
  G''\in\Gred(\gbth) \quad\text{and}\quad \Meas(G'',\wt''_\bth)=\Meas(G',\wt'_\bth).
\end{equation}
\end{proposition}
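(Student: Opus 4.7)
The plan is to treat the two cases of the proposition by factoring the transformation $G'\to G''$ into local operations: either reduction/square moves (Figures~\ref{fig:moves_red}--\ref{fig:moves_cs}) or gauge fixings. The equality $\Meas(G'',\wt''_\bth)=\Meas(G',\wt'_\bth)$ will be proved by exhibiting these moves explicitly, and reducedness of $G''$ together with $f_{G''}=\gbth$ will be proved by tracking strands through the modifications.

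First I would verify the boundary-measurement equality. When $\BBmod_\bth$ has no special cyclic interval we have $G''=G'$ and there is nothing to do. When the special interval $\Bmod$ exists, the type~(1) vertices occupy the $h\times v$ top-left rectangle described after \cref{lemma:type1}. I would analyze the weights on the boundary of this rectangle using \cref{rmk:types}: at each \type(2) vertex $b$ on the rim of the rectangle, exactly two of $\S(b),\E(b),\NW(b)$ lie in $\Bmod$, and after gauge-fixing the two surviving edges at $b$ carry equal weight. In the $k\notin\Bmod$ subcase this local picture says that the rim vertices only retain edges pointing \emph{into} the rectangle (the southwestern edges get disconnected from the exterior), so removing the rectangle amounts to iteratively applying the leaf-removal reduction move, which is known to preserve boundary measurements. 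In the $k+1\notin\Bmod$ subcase the rim picture dually says that the edges incident to \type(2) vertices inside/adjacent to the rectangle form a parallel-edge configuration of equal weight, and contracting them corresponds to iterated applications of the contraction and parallel-edge moves in \cref{fig:moves_red,fig:moves_cs}, with the self-loops that appear being absorbed by the same moves.

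Next I would identify the strand permutation of $G''$ with $\gbth$. Using \cref{notn:strands}, one can read off how the east/south/northwest strands of $\Gkn$ are cut, rerouted, or untouched by each surgery. The three-part partitions $\A=\A_L\sqcup\A_M\sqcup\A_R$ and $\B=\B_L\sqcup\B_M\sqcup\B_R$ in the proof of \cref{lemma:gBB} should match the three classes of strands produced by the rectangle surgery: the strands entering $\Bmod$ from the south/east of the rectangle, those reversed by passing through the rectangle, and those unaffected. Once this combinatorial matching is established, the explicit formula for $\gbth$ in \cref{lemma:gBB} directly yields $f_{G''}=\gbth$. Finally, reducedness of $G''$ will follow from the maximality of $\ell(\gbth)$ in \cref{lemma:gBB} together with the fact that the number of faces of $G''$ equals $\ell(\gbth)+1$ after the surgery; this count is preserved by each individual move used above.

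The main obstacle will be the $k+1\notin\Bmod$ case, where contracting all edges incident to \types(1,2) vertices inside the rectangle can generate parallel edges and self-loops in an order-dependent way. Proving that the resulting graph is independent of the order of contractions and is genuinely reduced (i.e., contains no ``bad double crossing'' in the sense of~\cite[Theorem~13.2]{Pos}) requires a careful strand-level analysis near the corners of the rectangle, and I expect this to be the subtle step. A secondary obstacle is checking that when the rectangle touches the global boundary of $\Gkn$ (so boundary edges of weight $1$ participate in the contraction), the gauge-fixing at rim vertices remains compatible with the boundary conditions built into $\wt_\bth$.
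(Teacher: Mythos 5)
Your plan follows the same overall strategy as the paper (exhibit the passage $G'\to G''$ as a sequence of moves from \cref{fig:moves_red,fig:moves_cs}, then track strands and faces to verify reducedness and the identity $f_{G''}=\gbth$), but two of the crucial ingredients are missing or wrong, and both are precisely the steps that carry the real content.

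First, the $k+1\notin\Bmod$ case does not reduce to ``edges of equal weight at \type(2) vertices.'' The equal-weight fact at \type(2) vertices only lets you contract the rim; once you move into the interior of the rectangle, you must handle \type(1) vertices, which still have three incident edges in $G'$. The engine of the paper's argument is the identity
\[
\wt_\bth(e)=\wt_\bth(e')+\wt_\bth(e'')
\]
at each \type(1) black vertex, where $e$ is its southwestern edge and $e',e''$ are the other two. This is what makes the parallel-edge reduction move produce a combined weight equal to $\wt_\bth(e)$, so that the image of the \type(1) vertex becomes a degree-$2$ vertex with two equal-weight edges, which can then be contracted. Without this identity the induction through the rectangle in the $k+1\notin\Bmod$ case does not close, and nothing in your sketch supplies it. (The identity follows from the existence of a $\Pakn$-circular chain $(p,q,r)$ contained in a tube $\t\in\T(\bth)$ with $p,q,r$ the residues of $\S(b),\NW(b),\E(b)$; see~\eqref{eq:ratmap_t}.)

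Second, your reducedness argument is not correct as stated. The number of faces of a reduced graph with strand permutation $g$ is $k(n-k)+1-\ell(g)$, not $\ell(g)+1$; more importantly, the face count is \emph{not} preserved by the reduction moves in \cref{fig:moves_red} (they each remove faces), so ``this count is preserved by each individual move'' cannot be the justification. The actual work in the paper is an explicit count that the passage from $\Gkn$ to $G''$ removes exactly $\ell(\gbth)$ faces: for non-special $\B$ this is the binomial expression $\binom{|\B|}{2}-\binom{\ovl(\A,\B)}{2}-\binom{\ovr(\A,\B)}{2}$ of~\eqref{eq:BB_aff_inv}, and for the special $\B$ one needs the identity $h(v-1)+\binom{h+1}{2}+\binom{v}{2}=\binom{h+v}{2}$. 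You gesture at an alternative route via the no-bad-double-crossing criterion of~\cite[Theorem~13.2]{Pos}, which is in principle viable, but you flag it as a future obstacle rather than carry it out; absent either the strand-level verification or the explicit face count, reducedness of $G''$ is not established. I would recommend supplying the \type(1) weight identity and completing one of the two reducedness arguments before calling the proof done.
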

\noindent For the example in \figref{fig:Le_hv_rect}(left), we have $k,k+1\notin \Bmod$, so either of the two above procedures yields a reduced graph $G''$ satisfying the conditions in~\eqref{eq:G''_red}.
\begin{proof}
 Consider a vertex $b\in\VB$ of \type(1) and let $e$ be its southwestern edge. We claim that $\wt_\bth(e)$ equals the sum of $\wt_\bth(e')$ over all other edges $e'$ of $b$. (In particular, $\wt_\bth(e)>0$ so $e$ is present in $G'$.) Indeed, this is clear if $b$ has degree $2$. If $b$ has degree $3$ then $\NW(b)$ belongs to the cyclic interval $[\S(b),\E(b)]$. Thus there exists a $\Pakn$-circular chain $(p,q,r)$ such that $p,q,r$ are equal respectively to $\S(b)$, $\NW(b)$, $\E(b)$ modulo $n$, and such that $p,q,r\in\t$ for some proper tube $\t\in\T(\bth)$. This implies that $\wt_\bth(e)=\wt_\bth(e')+\wt_\bth(e'')$, where $e,e',e''$ are labeled by $\{p,r\}$, $\{p,q\}$, and $\{q,r\}$, respectively. 

Let $\Bmod:=[n-h+1,n]\sqcup[v]$ for $v\in[k]$ and $h\in[n-k]$; see \cref{fig:Le_hv_rect}. Assume first that $k\notin\Bmod$, thus $v<k$. Let $b\in\VB$ be a vertex satisfying $\E(b)=v+1$ and $\S(b)\in\Bmod$. Then $b$ is of \type(2) with $\S(b),\NW(b)\in\Bmod$, and its northern edge labeled by $\{\S(b),\NW(b)\}$ is not present in $G'$. Thus the bottom left black vertex $b$ of \type(1) (defined by $\S(b)=n$, $\E(b)=v$) is adjacent to a white vertex of degree $1$ in $G'$. Applying a sequence of leaf removals (\figref{fig:moves_red}(middle)) starting with $b$ and proceeding up and to the right, we remove all black vertices of \type(1) and their southwestern white neighbors.

Assume now that $k+1\notin\Bmod$, thus $h<n-k$. Let $b'\in\VB$ be a vertex satisfying $\S(b')=n-h$ and $\E(b')\in[2,v]$. Then $b'$ is of \type(2) with $\E(b),\NW(b)\in\Bmod$, and its eastern edge labeled by $\{\E(b),\NW(b)\}$ is not present in $G'$. For a connected subgraph $H$ of $G'$, let $G'/H$ be obtained from $G'$ by contracting all edges in $H$ and removing all self-loops in the resulting graph. Initialize $H$ to consist of all edges incident to vertices $b\in\VB$ of \type(2). This includes the edges incident to black vertices at the top ($\E(b)=1$, $\S(b)\in\Bmod$) and the right ($\S(b)=n-h$, $\E(b)\in\Bmod$) boundaries of the $(h+1)\times v$ rectangle. Choose the top right \type(1) black vertex that is not a vertex of $H$. Its northern and eastern white neighbors are in $H$. Let $e,e',e''$ be the edges of $G$ incident to $b$  as above (where one of $e',e''$ may not be present in $G'$), so that $e$ is the southwestern edge. If both $e',e''$ are present then their images in $G'/H$ form a double edge. Applying a parallel edge reduction move (\figref{fig:moves_red}(left)), we transform this double edge into a single edge of weight $\wt_\bth(e')+\wt_\bth(e'')$, which, as we have shown above, equals $\wt_\bth(e)$. Thus the image of $b$ in $G/H$ is a vertex of degree $2$, and the two edges incident to it have the same weight. These two edges may be contracted using a contraction-uncontraction move (\figref{fig:moves_cs}(left)). This corresponds to adding $e,e',e''$ and their endpoints to $H$, and constitutes the induction step. Once all \type(1) vertices have been added to $H$, we arrive at $G'/H=G''$.

A straightforward consequence of the above construction is that $G''$ has strand permutation $f_{G''}=\gbth$ and satisfies $\Meas(G'',\wt''_\bth)=\Meas(G',\wt'_\bth)$. Indeed, we have $\Meas(G'',\wt''_\bth)=\Meas(G',\wt'_\bth)$ since $(G'',\wt''_\bth)$ was obtained from $(G',\wt'_\bth)$ via a sequence of moves in \cref{fig:moves_red,fig:moves_cs}. To see that $f_{G''}=\gbth$, we first observe directly that $f_{G''}(p)=p+n$ (resp., $f_{G''}(p)=p+1$) if and only if $\gbth(p)=p+n$ (resp., $\gbth(p)=p+1$). Next, since our edge removals taking $G$ to $G'$ and reduction moves taking $G'$ to $G''$ only involved edges labeled by $\{p,q\}$ where $p,q$ belong to a single interval $\Bmod'$ of $\BBmod$, we have $f_{G''}^{-1}(\B')=\B'-k=\gbth^{-1}(\B')$  for any interval $\B'\in\BB$. Finally, it is clear that the contracted version of $G''$ contains no edge labeled $\{p,q\}$ where $p,q$ belong to the same cyclic interval in $\BBmod$. Thus no two strands terminating at any given $\B'$ form a crossing, so $f_{G''}$ coincides with $\gbth$. 

We further note that for any black interior vertex $b\in V(G'')$, 
\begin{equation}\label{eq:weights}
  \text{the weights $\wt_\bth''(e)$ of the edges $e$ of $G''$ incident to $b$ are proportional to $\sinmapb_{\NeighXb_{G''}}(\bth)$.}
\end{equation}
Morally, the last property is close to the statement $\Meas(G'',\wt''_\bth)=\Measbop_{\gbth}(\bth)$, except that we have not yet shown that $G''$ is reduced, and we also have not defined the map $\Measbop_{g}$ for the case when $g$ does not have a connected strand diagram (cf. \cref{dfn:strand}).

In order to complete the proof of the proposition, we need to show that $G''$ is reduced. For that, we will use the following well-known characterization~\cite{Pos} of reduced graphs: $G''$ is reduced if and only if it has no isolated connected components and has exactly $k(n-k)+1-\ell(\gbth)$ faces. It is not hard to check that $G''$ has no isolated connected components.  Since $\Gkn$ has $k(n-k)+1$ faces, we need to show that our process above decreases the number of faces precisely by $\ell(\gbth)$. Since each affine inversion of $\gbth$ involves two strands with endpoints in the same interval of $\BB_\bth$, it suffices to show, for each interval $\B$ of $\BB_\bth$, that the number of affine inversions involving indices from $\B$ matches the number of faces removed from $G$ due to deleting/contracting edges labeled by $\{p,q\}$ for $p,q\in\Bmod$.

Let $\B\in\BB_\bth$, and let $\A:=\B-k$. It follows from the proof of \cref{lemma:gBB} that the number of affine inversions of the restriction of $\gbth$ to $\A$ equals
\begin{equation}\label{eq:BB_aff_inv}
  {|\B|\choose 2}-{\AL\choose2}-{\COL\choose2}.
\end{equation}
If $\B$ is not special then we see that~\eqref{eq:BB_aff_inv} also describes the number of \type(2) vertices involving two indices in $\Bmod$. Indeed, if $\B$ is not special then either $\Bmod\subseteq[k+1,n]$ or $\Bmod\subseteq[k]$. In the former case, we have $\COL=0$ and the number of \type(2) vertices involving two indices in $\B$ equals ${|\B|\choose 2}-{\AL\choose2}$.
 In the latter case, we have $\AL=0$ and the number of \type(2) vertices involving two indices in $\B$ equals ${|\B|\choose 2}-{\COL\choose2}$. Each such \type(2) vertex is incident to an edge of $G$ which is not present in $G'$. We therefore see that in both cases, the number of faces decreases by the quantity given in~\eqref{eq:BB_aff_inv}.

We concentrate on the case where $\B$ is special, so assume $\Bmod=[n-h+1,n]\sqcup[v]$. Either of the two ways to reduce $G'$ to $G''$ removes exactly $h(v-1)$ faces contained in the rectangular region. (When $k+1\in\Bmod$, this includes joining the $v-1$ boundary faces contained between the boundary vertices $b_p$ for $p\in[v]$ into a single boundary face.)
 Next, we count the number of edges removed when passing from $G$ to $G'$. All of them are adjacent to \type(2) black vertices, and are contained in two trapezoidal regions shown in \figref{fig:Le_hv_rect}(right).
 The lower left (resp., upper right) region is a trapezoid if $\AL>0$ (resp., $\COL>0$) and a triangle if $\AL=0$ (resp., $\COL=0$). It contains ${h+1\choose2}-{\AL\choose 2}$ (resp., ${v\choose2}-{\COL\choose2}$)  vertices of \type(2) involving two indices in $\B$. The result follows since 
\begin{equation*}%
  h(v-1)+{h+1\choose2}+{v\choose2}={h+v\choose2}={|\B|\choose 2}.\qedhere
\end{equation*}
\end{proof}

\begin{proof}[Proof of \cref{thm:map_factors}]
By \cref{dfn:Measbf}, we have $\Measbf(\bth)=\Meas(G',\wt'_\bth)$, which equals $\Meas(G'',\wt''_\bth)$ by \cref{prop:graph_reduction}. By \cref{rmk:types}, the edge weights of $G''$ may be computed purely in terms of the side length ratios encoded in $\mapCD(\bth)$. Thus $\Measbf$ factors through $\mapCD$. Since $\mapCD$ is surjective, there exists a unique map $\mapDC:\Delta_{2,n}\to\Ctnn_{k,n}$ making the diagram~\eqref{eq:intro:map_factors} commutative. It remains to show that $\mapDC$ is continuous. Letting $X:=\Cyc(\Pakn)$, $Y:=\Delta_{2,n}$, and $Z:=\Ctnn_{k,n}$, we have maps $X\xrightarrow{\mapCD} Y \xrightarrow{\mapDC} Z$ such that the composition $\mapDC\circ\mapCD$ is continuous. Choose a closed subset $Z'\subseteq Z$. Then $X':=(\mapDC\circ\mapCD)^{-1}(Z')$ is a closed subset of $X$. Observe that $X$ is compact while $Y$ is Hausdorff, thus $\mapCD$ is closed. Therefore $Y':=\mapCD(X')$ is a closed subset of $Y$. It follows from the surjectivity of $\mapCD$ that $Y'=\mapDC^{-1}(Z')$. Thus $\mapDC$ is continuous. 
\end{proof}

\subsection{Positroids and weak separation}\label{sec:weak_sep}
Before we proceed with the final step of the proof, we need to introduce some constructions related to \emph{positroids}; see~\cite{Pos,OPS} for background. Our ultimate goal is to prove \cref{cor:touch_arch,cor:square_face}, which state that under certain hypotheses, we can apply square moves to find either an interior square face or an \emph{$I_j$-arch} (\cref{fig:arch} and \cref{dfn:arch}). Finding such faces bounded by a small number of edges is useful for our proof of the injectivity in \cref{sec:inj} since it allows one to reconstruct cross-ratios of edge weights from the image of $\Measbkn$.

Let $g\in\Bkn$ be a bounded affine permutation. For $q\in\Z$, let
\begin{equation}\label{eq:Grneck_dfn}
  \It_q:=\{g(p)\mid p\in\Z\text{ is such that }p<q\leq g(p)\}.
\end{equation}
We set $\Icalt_g=(\It_q)_{q\in\Z}$. For $q\in[n]$, let $I_q\in{[n]\choose k}$ be obtained from $\It_q$ by reducing all elements modulo $n$. The \emph{Grassmann necklace} of $g$ is the sequence $\Ical_g=(I_1,I_2,\dots,I_n)$. For each $q\in[n]$, consider a total order $\leq_q$ on $[n]$ given by $q\leq_q q+1\leq_q\cdots\leq_q q-1$. For two sets $I=\{i_1<_q i_2 <_q \cdots <_q i_k\}$ and $J=\{j_1<_qj_2<_q\cdots <_q j_k\}$, we write $I\leq_q J$ if $i_r\leq_q j_r$ for all $r\in[k]$. The \emph{positroid} $\Mcal_g$ of $g$ is defined as the collection of all $J\in{[n]\choose k}$ satisfying $I_q\leq_q J$ for each $q\in[n]$.

We say that $I,J\in{[n]\choose k}$ are \emph{weakly separated}~\cite{LeZe} if there do not exist indices $1\leq a<b<c<d\leq n$ such that $a,c\in I\setminus J$ and $b,d\in J\setminus I$ or vice versa.

For $G\in\Gred(g)$ and $j\in[n]$, we let $w_j$ denote the unique neighbor of the degree $1$ boundary vertex $b_j$.
\begin{definition}\label{dfn:arch}
Let $g\in\Bkn$ and $j,\rt\in[n]$. Let $r:=\gb(j-1)\in[n]$. (Here and below the index $j-1$ is taken modulo $n$.) Assume that $\rt\neq j\neq r\neq \rt$. We say that $\rt$ \emph{touches an $I_j$-arch} (with respect to $g$) if there exists a contracted graph $G\in\Gred(g)$ such that the boundary face of $G$ between $b_j$ and $b_{j-1}$ is a pentagon with vertices $(b_j,w_j,b,w_{j-1},b_{j-1})$ for some black interior vertex $b$, and such that the strand labeled $\rt$ passes through the edges connecting $w_j$ to $b$ and $b$ to $w_{j-1}$. See \cref{fig:arch}.
\end{definition}

\begin{figure}
\includegraphics[width=0.2\textwidth]{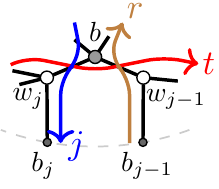} 
  \caption{\label{fig:arch} An $I_j$-arch; see \cref{dfn:arch}.}
\end{figure}

Our notion of an \emph{$I_j$-arch} is closely related to the notion of a \emph{BCFW bridge}; see~\cite{BCFW,abcgpt,LamCDM}. In fact, a bridge is a special case of an arch when either $w_j$ or $w_{j-1}$ has degree $2$; compare \cref{fig:arch} to e.g.~\cite[Figure~7]{crit}. 
We now establish a useful criterion for the existence of an $I_j$-arch.
\begin{lemma}\label{lemma:keystone}
Let $g\in\Bkn$, $j,\rt\in[n]$, $r:=\gb(j-1)$ be such that  $\rt\neq j\neq r\neq \rt$. Then $\rt$ touches an $I_j$-arch if and only if the sets
\begin{equation}\label{eq:JK_keystone}
  J:=I_j\cup\{\rt\}\setminus\{j\} \quad\text{and}\quad R:=I_j\cup\{\rt\}\setminus\{r\}
\end{equation}
belong to $\Mcal_g$ and are weakly separated from all sets in $\Ical_g$.
\end{lemma}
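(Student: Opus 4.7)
The plan is to exploit the correspondence, due to Oh--Postnikov--Speyer~\cite{OPS}, between reduced plabic graphs $G\in\Gred(g)$ and maximal weakly separated collections inside the positroid $\Mcal_g$. Under this correspondence, the face labels of $G$ form such a collection, the boundary face labels are precisely the Grassmann necklace $\Ical_g$, and two graphs in $\Gred(g)$ have face-label collections connected by mutations precisely when they are connected by square moves (modulo contraction--uncontraction).

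For the forward implication, I would identify the two interior faces $F_1,F_2$ of $G$ meeting the pentagon across its interior edges $w_j b$ and $b w_{j-1}$. The strand $\rt$ passes through both these edges; the second strand through each is forced by the arch geometry (the strand from $b_j$ for the edge $w_j b$, and the strand terminating at $b_r$ for the edge $b w_{j-1}$, where $r=\gb(j-1)$). Hence these edges carry labels $\{j,\rt\}$ and $\{r,\rt\}$ respectively. Applying the standard rule that face labels on the two sides of an edge labeled $\{p,q\}$ differ by swapping $p\leftrightarrow q$, starting from the pentagon label $I_j$, yields $\{F_1,F_2\}=\{J,R\}$. Since face labels of reduced graphs belong to $\Mcal_g$ and are pairwise weakly separated, both requirements on $J,R$ follow, and in particular they are weakly separated from every $I_q\in\Ical_g$.

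For the backward implication, observe first that $J$ and $R$ are weakly separated from each other because $|J\setminus R|=|R\setminus J|=1$. Thus $\Ical_g\cup\{J,R\}$ is a weakly separated subcollection of $\Mcal_g$, and OPS extends it to a maximal weakly separated collection $\Ccal$ realized by the face labels of some $G_0\in\Gred(g)$. In $G_0$, each of $J,R$ differs from $I_j$ by a single-element swap (and from each other by a single swap), which forces the three faces $I_j,J,R$ to meet at a common interior vertex $b$ of $G_0$, with $b$ black (since crossing either edge inserts $\rt$). The remaining task is to use square moves to bring the local structure around $I_j$ into the prescribed pentagonal arch form, with $b$ of degree exactly~$3$ and the pentagon bounded by the edges $b_j w_j$, $w_j b$, $b w_{j-1}$, $w_{j-1}b_{j-1}$ together with the boundary arc.

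The main obstacle is this last combinatorial step of the backward direction: in $G_0$, the face $I_j$ may be bounded by additional interior edges, or the vertices $w_j,w_{j-1},b$ may have higher degree, so the arch pattern is not immediate. I expect the cleanest route is a direct recursive construction rather than manipulation of $G_0$: remove the arch from $g$ to produce a smaller loopless bounded affine permutation $g'\in\Bound(k',n)$ (obtained by uncrossing the strands $j,r,\rt$ near $b_j,b_{j-1}$), use the hypotheses on $J,R$ together with~\eqref{eq:Grneck_dfn} to verify that $\Ical_{g'}$ is compatible with $\Ical_g$ and that $\Mcal_{g'}$ absorbs the relevant weak separation conditions, apply OPS to obtain a reduced graph $G'\in\Gred(g')$, and then glue the pentagon and black vertex $b$ back onto $G'$ to produce the desired $G\in\Gred(g)$ with the arch at $I_j$. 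This parallels the BCFW-bridge removal of~\cite{BCFW,abcgpt}, which handles the special case where one of $w_j,w_{j-1}$ has degree~$2$.
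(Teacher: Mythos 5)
Your \emph{only if} direction is essentially the paper's, just spelled out in more detail via the face-label swap rule across interior edges; that part is fine.

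The \emph{if} direction has a genuine gap. After producing $G_0\in\Gred(g)$ containing $J$ and $R$ among its face labels, you assert that the three faces $I_j,J,R$ ``must meet at a common interior black vertex $b$'' but give no argument, and then you stall: you think additional square moves or a recursive arch-removal construction are needed to put the boundary face of $G_0$ into pentagonal form. In fact, the key input you are missing is the \emph{clique} machinery of Oh--Postnikov--Speyer, which makes both your unproved claim and your ``remaining task'' fall out directly, with no further moves required. Concretely: since $|J\cup I_j\cup R|=|I_j\cup\{\rt\}|=k+1$, the three sets $J,I_j,R$ lie in a non-trivial black clique in the sense of~\cite[Section~9]{OPS}, so the corresponding faces share a black vertex $b$. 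Separately, $I_j,J,I_{j+1}$ form a white clique (or $J=I_{j+1}$ and the shared vertex is degree~$2$), and since $G_0$ is contracted this shared white vertex is $w_j$; similarly for $w_{j-1}$. Then~\cite[Lemma~9.2]{OPS} gives that $I_j$ and $J$ share the edge $w_j b$, and $I_j$ and $R$ share the edge $w_{j-1}b$. Combined with the two boundary edges and the boundary arc, the face $I_j$ is already the pentagon $(b_j,w_j,b,w_{j-1},b_{j-1})$ --- no square moves, no degree normalizations, no recursion are needed. Your worry that ``$I_j$ may be bounded by additional interior edges'' is ruled out exactly by these clique arguments, and the definition of an $I_j$-arch places no constraint on the degrees of $w_j,w_{j-1},b$, so your second worry is moot. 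The arch-removal/BCFW-style recursion you sketch is unnecessary and would require substantial work to make rigorous (in particular, verifying the compatibility of $\Ical_{g'}$ with $\Ical_g$ under the hypotheses on $J,R$ is nontrivial and not something you attempt). One small point you did get right that the paper phrases differently: you justify that $J$ and $R$ are weakly separated from each other by $|J\setminus R|=|R\setminus J|=1$, which is cleaner than the paper's ``clearly''; but the paper also records the intermediate observation that $j$ and $j-1$ are neither loops nor coloops, which is what guarantees $I_{j-1}\neq I_j\neq I_{j+1}$ and is needed for the white-clique step.
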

\begin{proof}
We start with the \emph{if} direction. Since $I_j,J,R\in \Mcal_g$, they are all of size $k$, and since $\rt\neq j\neq r\neq \rt$, we have $I_j\neq J\neq R\neq I_j$. In particular, $j,j-1$ are neither loops nor coloops. (Otherwise, either $j$ or $r$ would appear in either none or all of the three sets $I_j,J,R$.) Clearly, $J$ and $R$ are weakly separated from each other. Since they are also weakly separated from all sets in $\Ical_g$, by~\cite[Theorem~1.5]{OPS}, there exists a contracted graph $G\in\Gred(g)$ such that $J,R$ appear as \emph{face labels} of $G$. Here we label the faces of $G\in\Gred(g)$ by $k$-element sets as follows: for each face $F$ of $G$, the label of $F$ contains $s\in[n]$ if and only if $F$ is to the left of the strand terminating at $b_s$. 

Observe that $|J\cup I_j\cup R|=|I_j\cup \{\rt\}|=k+1$. Thus $J,I_j,R$ belong to a \emph{non-trivial black clique} in the sense of~\cite[Section~9]{OPS}. In particular, the faces of $G$ labeled $J,I_j,R$ share a black vertex $b\in V(G)$. 

Since $j,j-1$ are neither loops nor coloops, we have $I_{j-1}\neq I_j\neq I_{j+1}$.  Suppose that $J\neq I_{j+1}$. Then $I_j,J,I_{j+1}$ belong to a \emph{non-trivial white clique}, and thus the corresponding faces of $G$ share a common white vertex, which, since $G$ is contracted, equals $w_j$. If $J=I_{j+1}$ then the two faces labeled by $J$ and $I_j$ still share the degree $2$ vertex $w_j$. Similarly, the faces labeled by $R$ and $I_j$ share $w_{j-1}$. By~\cite[Lemma~9.2]{OPS}, $I_j$ and $J$ share an edge connecting $w_j$ to $b$ while $I_j$ and $R$ share an edge connecting $w_{j-1}$ to $b$. The strand labeled $\rt$ therefore must pass through both of these edges, so $\rt$ touches an $I_j$-arch.

The \emph{only if} direction is a trivial consequence of the results of~\cite{OPS}: if $t$ touches an $I_j$-arch then $J,R$ appear as labels of the faces of $G$ containing $b$, and therefore $J,R$ belong to $\Mcal_g$ and are weakly separated from all sets in $\Ical_g$ by~\cite[Theorem~1.5]{OPS}.
\end{proof}

Next, we apply the above lemma to a particular class of permutations $\gBB$ constructed in \cref{lemma:gBB}.
\begin{definition}\label{dfn:BB_generic}
An $n$-periodic interval partition $\BB$ of $\Z$ is called \emph{generic} if we have 
\begin{equation*}%
  |\B|\leq \min(k-1,n-k) \quad\text{for all $\B\in\BB$.}
\end{equation*}
\end{definition}
\noindent In other words, $\BB$ is generic if and only if $\ovl(\A,\B)=\ovr(\A,\B)=0$ for all $\B\in\BB$ and $\A:=\B-k$. Consequently, $\gBB$ restricts to an order-reversing map $\A\to\B$ for each such pair $(\A,\B)$. For the rest of this subsection, we fix some generic $\BB$. Recall from \cref{notn:cyclic} that for $p,q\in[n]$, $[p,q)$ denotes the corresponding cyclic interval.
\begin{lemma}\label{lemma:Grneck_decsr}
Let $[p,q)\in\BBmod$ and $r\in [p,q)$. Let $j\in[n]$ be equal to $p+q-k-r$ modulo $n$. Then the corresponding element of the Grassmann necklace $\Ical_{\gBB}$ is given by
\begin{equation*}%
  I_j:=[j,p)\sqcup [r,q).
\end{equation*}
Moreover, every element of $\Ical_{\gBB}$ appears in this way for a unique triple $(p,q,r)$.
\end{lemma}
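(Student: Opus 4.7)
The plan is to compute the lifted Grassmann necklace element $\It_{j'}$ for a convenient integer lift $j'$ of $j$, using the explicit formula for $\gBB$ from \cref{lemma:gBB}, and then verify uniqueness by a partition argument. First I would fix a representative $\B = [p', q') \in \BB$ of the cyclic interval $[p,q) \in \BBmod$ and the lift $r' \in [p', q')$ of $r$, and set $j' := p' + q' - k - r'$. Since reducing $\It_{j'}$ modulo $n$ yields $I_j$, it suffices to determine $\It_{j'}$. Under the genericity assumption $|\B| \leq \min(k-1, n-k)$, \cref{lemma:gBB} shows that on each block $\A_i := \B_i - k = [a_i, b_i]$ of the partition $\{\B_i - k : \B_i \in \BB\}$ of $\Z$, $\gBB$ acts by the order-reversing formula $\gBB(p) = a_i + b_i + k - p$.

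Next I would determine which blocks $\A_i$ contribute to $\It_{j'}$. Blocks cyclically following $\A := \B - k$ (those with $\min \A_i \geq q' - k$) contribute nothing, since the condition $p < j'$ fails because $j' \leq q' - k$. For $\A = [p' - k, q' - k)$ itself, combining $p < j'$ and $\gBB(p) \geq j'$ gives $p \in [p' - k, j')$, as $j' - 1$ is the smallest upper bound---it is bounded above by $q' - k - 1$ automatically (using $r' \geq p'$) and by $r' - 1$ via $|\B| \leq k - 1$ (which forces $r' \geq p' > (p' + q' - k)/2$, hence $j' < r'$). Under $\gBB$ these $p$'s map bijectively onto $[r', q')$, which reduces to $[r, q)$ modulo $n$. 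For blocks $\A_i$ cyclically preceding $\A$ (with $\max \A_i \leq p' - k - 1$), the condition $p < j'$ is automatic, and a direct computation shows that the $\gBB$-image of the valid $p$'s is precisely $\B_i \cap [j', \infty)$. Summing over these preceding blocks and using the identity $\bigsqcup_{i \leq 0} \B_i = (-\infty, q' - 1]$, the total contribution equals $[j', p')$, which reduces to $[j, p)$ modulo $n$. Finally, $[j', p')$ and $[r', q')$ are disjoint in $\Z$, and their union lies within $[j', q')$ of length $q' - j' \leq |\B| + k - 1 \leq n - 1$, so no wrap-around collisions occur when reducing modulo $n$; this establishes the formula.

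For uniqueness, I would observe that as $r'$ varies over $[p', q')$, the value $j' = p' + q' - k - r'$ ranges over $[p' - k + 1, q' - k + 1) = \A + 1$. Since the family $\{\A_i : \B_i \in \BB\}$ partitions $\Z$, so does $\{\A_i + 1\}$, and hence modulo $n$ the cyclic intervals $\{(\A_i + 1) \bmod n\}$ partition $[n]$. Each $j \in [n]$ therefore lies in exactly one such cyclic interval, which uniquely determines $\B$ (hence $[p, q) \in \BBmod$) and then $r$. The main technical difficulty will be careful bookkeeping of the several interval inequalities and invoking the two parts of the genericity hypothesis at the right places---$|\B| \leq k - 1$ to pin down the binding upper bound on $\A$'s contribution, and $|\B| \leq n - k$ to preclude modular collisions between the two resulting intervals.
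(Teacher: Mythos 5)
Your proposal is correct and carries out in full the direct computation that the paper compresses into the one-line remark ``Follows from~\eqref{eq:Grneck_dfn} by direct observation.'' You lift to $\Z$, identify which blocks of $\{\B_i-k\}$ can contribute to $\It_{j'}$, correctly determine that the binding constraint on the contribution from $\A=\B-k$ is $p<j'$ (using $|\B|\le k-1$ to show $j'<r'$ and $j'\le q'-k$), observe that the preceding blocks contribute exactly $(-\infty,p')\cap[j',\infty)=[j',p')$, and use $|\B|\le n-k$ to bound $q'-j'\le n-1$ so that the two integer intervals reduce to disjoint cyclic intervals. The uniqueness argument via the partition $\{\A_i+1\}$ of $\Z$ is also sound. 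This is exactly the computation the paper leaves to the reader; you have just written out the bookkeeping the paper declined to include.
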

\noindent We note that such Grassmann necklaces have been previously studied in~\cite[Section~4.4]{FG}.
\begin{proof}
Follows from~\eqref{eq:Grneck_dfn} by direct observation.
\end{proof}
\begin{definition}
A set $J\in{[n]\choose k}$ is called \emph{right-aligned} if for each $[p,q)\in\BBmod$, we have
\begin{equation*}%
  J\cap [p,q)=[r,q) \quad\text{for some $r\in[p,q]$.}
\end{equation*}
\end{definition}
\begin{lemma}\label{lemma:right_aligned}
Let $J\in{[n]\choose k}$ be right-aligned. Then $J\in\Mcal_{\gBB}$ and $J$ is weakly separated from all sets in $\Ical_{\gBB}$.
\end{lemma}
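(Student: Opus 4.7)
The plan is to prove the two assertions of the lemma separately, using as main input the fact that every Grassmann necklace element $I_q$ is itself right-aligned. From \cref{lemma:Grneck_decsr}, if $\B_q=[P_q,Q_q)\in\BBmod$ denotes the block containing $q+k-1\pmod n$ and $\B^{(q)}=[P^{(q)},Q^{(q)})$ the block containing $q$, then $I_q\cap\B_q=[R_q,Q_q)$ with $R_q\equiv P_q+Q_q-k-q\pmod n$, $I_q\cap\B^{(q)}=[q,Q^{(q)})$, $I_q$ fully contains every block strictly between $\B^{(q)}$ and $\B_q$ in the cyclic order starting at $q$, and $I_q$ is empty on every remaining block. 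So $I_q$ is right-aligned, and the identity $P_q+Q_q=k+q+R_q$ will be crucial throughout.

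For weak separation of $J$ from $I_q$, write $J\cap \B=[R^J_\B,Q_\B)$ for each $\B=[P_\B,Q_\B)\in\BBmod$. Since $I_q$ and $J$ are both right-aligned, within each block the symmetric difference $(I_q\triangle J)\cap\B$ is a contiguous interval lying wholly on one side. Blocks strictly inside the arc $[Q^{(q)},P_q)$ satisfy $I_q\cap\B=\B$ and contribute only to $I_q\setminus J$; blocks strictly inside $[Q_q,P^{(q)})$ satisfy $I_q\cap\B=\emptyset$ and contribute only to $J\setminus I_q$. Only the transition blocks $\B^{(q)}$ and $\B_q$ can contribute to either side, governed respectively by the signs of $R^J_{\B^{(q)}}-q$ and $R^J_{\B_q}-R_q$. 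A short case analysis on these two signs exhibits, in each case, a cyclic chord separating $I_q\setminus J$ from $J\setminus I_q$, which gives weak separation.

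For positroid membership I would verify the defining prefix inequality $|J\cap[q,x]|\le|I_q\cap[q,x]|$ (with $[q,x]$ denoting the $\le_q$-prefix, i.e., the cyclic interval in the sense of \cref{notn:cyclic}) for every $q,x\in[n]$. When $x\in[q,P_q)$ the right side equals the full interval size $|[q,x]|$, and when $x$ lies beyond $\B_q$ in $\le_q$-order the right side already equals $k$, so both ranges are immediate. The critical range is $x\in\B_q$. Rewriting the goal as $|J\cap(x,q)|\ge Q_q-x-1$ and splitting on whether $R^J_{\B_q}\le x+1$ or $R^J_{\B_q}>x+1$, the bound reduces in each case, via the total $\sum_{\B\in\BBmod}|J\cap\B|=k$ together with the identity $P_q+Q_q=k+q+R_q$, to a one-line arithmetic check.

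The main obstacle I anticipate is the critical-range estimate: the bound is tight, so the trivial block-wise bounds $|J\cap\B|\le|\B|$ alone are insufficient, and one must combine the global sum constraint with the defining identity for $R_q$. Cyclic index conventions---wraparound, the edge case $R^J_\B=Q_\B$ recording $J\cap\B=\emptyset$, and the degenerate possibility that no block lies strictly between $\B^{(q)}$ and $\B_q$---will also require careful bookkeeping.
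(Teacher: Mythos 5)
Your route differs genuinely from the paper's. The paper first shifts cyclically so that the necklace index is $j=1$; by \cref{lemma:Grneck_decsr} this writes $I_1=[1,p)\sqcup[r,q)$ as a \emph{prefix} $[1,p)$ glued to a block-\emph{suffix} $[r,q)$ of the single block $[p,q)$, at which point everything collapses to one dichotomy: if $J\cap[p,r)=\emptyset$ then $J\cap[p,q)\subseteq I_1\cap[p,q)$, and otherwise right-alignedness forces $J\cap[p,q)\supseteq[r,q)$; in both cases $J\cap[1,p)\subseteq[1,p)=I_1\cap[1,p)$ is automatic, and weak separation and $I_1\leq_1 J$ drop out in two lines. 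Your proof instead stays in general position and decomposes the interaction of $I_q$ and $J$ block by block. The weak-separation half of your argument is correct: the contribution of each block is one-sided and the four sign cases do yield a separating chord. But it is substantially more bookkeeping than the paper needs, because you never exploit that a cyclic shift turns $I_q$ into the simple set $[1,p)\sqcup[r,q)$.

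There is, however, a genuine error in your treatment of positroid membership. You state that for $x\in\B_q=[P_q,Q_q)$ the Gale prefix inequality $|J\cap[q,x]|\leq|I_q\cap[q,x]|$ is equivalent to $|J\cap(x,q)|\geq Q_q-x-1$. Using $k=P_q+Q_q-q-R_q$ one computes
\begin{equation*}
  |I_q\cap(x,q)|=Q_q-\max(R_q,\,x+1),
\end{equation*}
so the correct complementary form of the goal is $|J\cap(x,q)|\geq Q_q-\max(R_q,x+1)$. Your rewrite replaces $\max(R_q,x+1)$ by $x+1$, which coincides with the truth only when $x\geq R_q-1$. For $x\in[P_q,R_q-1)$ your claimed inequality is strictly stronger than what is required, and is in fact false: take $J=I_q$ and any $x$ with $P_q\leq x<R_q-1$; then $|J\cap(x,q)|=Q_q-R_q<Q_q-x-1$. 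So as written, your ``one-line arithmetic check'' would be verifying a false statement in that subrange. The actual Gale inequality does still hold there (one checks $|J\cap[q,x]|\leq P_q-q$ directly, again using $\sum_\B|J\cap\B|=k$), so the lemma is not in danger, but your proposal needs the rewrite corrected to use $\max(R_q,x+1)$, or equivalently it should keep the original form $|J\cap[q,x]|\leq(P_q-q)+\max(0,x+1-R_q)$ throughout. After that fix the arithmetic you outline does close, but the paper's cyclic-shift-plus-dichotomy argument remains much shorter and avoids the prefix inequality entirely.
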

\begin{proof}
Let $I_j\in\Ical_{\gBB}$. After a cyclic shift, we may assume $j=1$, thus $I_1=[1,p)\sqcup[r,q)$ for some $[p,q)\in \BBmod$ with $r\in[p,q)$. Our goal is to show that $I_1\leq_1 J$ and that $J$ is weakly separated from $I_1$.  If $J$ does not contain any elements in $[p,r)$ then both claims are clear. Otherwise, let $I_1':=[1,p)$, $J':=J\cap[1,p)$, $I_1'':=[r,q)$, and $J'':=J\cap[p,q)$, thus $J''$ contains an element $s\in [p,r)$. However, since $[p,q)\in\BBmod$ and $J$ is right-aligned, we must have $J''\supseteq I_1''$. On the other hand, $J'\subseteq I_1'$, so $I_1$ and $J$ are weakly separated. Moreover, because $|I_1|=|J|$ and $J$ contains the last $q-r$ elements of $I_1$, we get $I_1\leq_1 J$.
\end{proof}

\begin{corollary}\label{cor:touch_arch}
Let $j\in[n]$ and $[s,s')\in\BBmod$ be such that $I_j\cap [s,s')=\emptyset$. Then $\rt:=s'-1$ touches an $I_j$-arch with respect to $\gBB$.
\end{corollary}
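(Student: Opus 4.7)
My plan is to invoke \cref{lemma:keystone} with $g = \gBB$, which reduces the statement to checking the non-equalities $\rt \neq j \neq r \neq \rt$ (with $r := \gb(j-1)$) together with the membership $J, R \in \Mcal_{\gBB}$ and weak separation of $J, R$ from every set in $\Ical_{\gBB}$, for $J := I_j \cup \{\rt\} \setminus \{j\}$ and $R := I_j \cup \{\rt\} \setminus \{r\}$. By \cref{lemma:right_aligned}, the latter two conditions will follow at once once I show that $J$ and $R$ are right-aligned.

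The non-equalities are handled quickly. From $\rt = s' - 1 \in [s, s')$ and $I_j \cap [s, s') = \emptyset$ I get $\rt \notin I_j$; meanwhile $j \in I_j$ by \cref{lemma:Grneck_decsr} (which writes $I_j = [j, p_*) \sqcup [r_*, q_*)$, with $j \neq p_*$ guaranteed by genericity since $j = p_*$ would force $|\B| = k$), and $r \in I_j$ by the swap relation $I_j = (I_{j-1} \setminus \{j-1\}) \cup \{r\}$; thus $\rt$ differs from both $j$ and $r$. For $j \neq r$: the bound $|\B| \leq k-1$ gives $\ovr(\A, \B) = 0$ for every block, so in \cref{lemma:gBB} the set $\A_R$ is empty and $\gBB$ is purely order-reversing on each $\A \to \B$; in particular $\gBB(p) \neq p+1$ for all $p$, so $r \neq j$.

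The crux of the plan is to show that $j$ and $r$ are the leftmost elements of the right-aligned parts of $I_j$ in the blocks of $\BBmod$ containing them. For $j$ this is direct from the decomposition above: since the block of $j$ is distinct from $[p_*, q_*)$, its intersection with $I_j$ is either the whole block (if $j$ is the block's left endpoint) or a suffix starting at $j$. For $r$ I would apply \cref{lemma:Grneck_decsr} also to $I_{j-1}$: both sets are right-aligned and they differ only by the swap $r \leftrightarrow j-1$, so provided $r$ and $j - 1$ lie in distinct blocks of $\BBmod$, the intersections with the block of $r$ are two right-aligned intervals with a common right endpoint differing only by the deletion of $r$, forcing $r$ to be the leftmost element. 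Showing that $r$ and $j-1$ lie in distinct blocks is the only step that uses the full genericity bound: if the blocks of $\BB$ containing these two elements were related by a shift of $dn$, the only non-trivial cases $d = 0$ and $d = 1$ would force a single block to contain two elements at distance $k$ or $n-k$ respectively, each contradicting $|\B| \leq \min(k-1, n-k)$.

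With these leftmost-element facts in hand, right-alignment of $J$ and $R$ is routine. The block $[s, s')$ is distinct from both the block of $j$ and the block of $r$, because $j, r \in I_j$ while $I_j \cap [s, s') = \emptyset$. Removing a leftmost element from a right-aligned interval $[x, y)$ yields the right-aligned interval $[x+1, y)$, and inserting $\rt = s'-1$ into the empty intersection $I_j \cap [s, s')$ creates the singleton right-aligned interval $[s'-1, s')$; all other blocks remain unchanged. Hence $J$ and $R$ are right-aligned. I expect the main obstacle to be the ``different blocks'' step in the previous paragraph, as it is the sole place where the full bound in \cref{dfn:BB_generic} genuinely enters the argument.
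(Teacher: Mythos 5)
Your proof is correct and takes essentially the same route as the paper: reduce via \cref{lemma:keystone} to showing that $J$ and $R$ are right-aligned, then apply \cref{lemma:right_aligned}. The paper treats right-alignment as immediate from the decomposition $I_j = [j,p)\sqcup[r,q)$ of \cref{lemma:Grneck_decsr} together with the one-line observation $r=\gbBB(j-1)$, which directly exhibits $j$ and $r$ as the leftmost elements of $I_j$ in their respective blocks, so that deleting either and inserting $\rt = s'-1$ into an untouched block preserves right-alignment. Your detour through $I_{j-1}$ is in effect a verification of that observation (identifying $\gb(j-1)$ with the $r$-parameter of the decomposition), so you are actually filling in a step the paper leaves implicit rather than departing from it. One small gap worth patching: to conclude $r\neq j$ you rule out $\gBB(j-1)=j$ via $\ovr=0$ (from $|\B|\leq k-1$), but since $r=\gb(j-1)$ is reduced modulo $n$ you must also rule out $\gBB(j-1)=j+n$ (a coloop); this requires $\ovl=0$, which follows from the other half of genericity, $|\B|\leq n-k$. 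Only with both bounds in hand is $\gBB$ purely order-reversing on each $\A\to\B$, as your argument asserts.
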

\begin{proof}
Let $I_j=[j,p)\sqcup[r,q)$ with $r\in[p,q)\in\BBmod$ be as in \cref{lemma:Grneck_decsr}. Observe that $r=\gbBB(j-1)$. The sets $J,R$ given by~\eqref{eq:JK_keystone} are clearly right-aligned. By \cref{lemma:right_aligned}, they satisfy the conditions of \cref{lemma:keystone}.
\end{proof}

\begin{corollary}\label{cor:square_face}
Assume that $k\leq n-2$. Let $[p_1,q_1),[p_2,q_2),[p_3,q_3),[p_4,q_4)\in\BBmod$ be four disjoint intervals, listed in clockwise order. Then there exists a contracted graph $G\in\Gred(\gBB)$ containing a square face whose edges are labeled by $\{\rt_1,\rt_2\}$, $\{\rt_2,\rt_3\}$, $\{\rt_3,\rt_4\}$, $\{\rt_1,\rt_4\}$ with $\rt_j\in[p_j,q_j)$ for each $j=1,2,3,4$.
\end{corollary}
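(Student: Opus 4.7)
The approach mirrors \cref{cor:touch_arch}: we construct an explicit configuration of right-aligned $k$-subsets and invoke \cite[Theorem~1.5]{OPS} to realize it as face labels around a square face in some contracted graph $G \in \Gred(\gBB)$. The new ingredient is the OPS mutation theory, which identifies square faces with mutable face labels.

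First I would choose $\rt_j \in [p_j, q_j)$ for $j = 1, 2, 3, 4$ and a right-aligned $(k-2)$-subset $J \subseteq [n]$, disjoint from $\{\rt_1, \rt_2, \rt_3, \rt_4\}$, such that $J \cap [p_j, q_j) = [\rt_j + 1, q_j)$ for each $j$. Writing $u_j := \rt_j - p_j + 1 \in [1, |[p_j, q_j)|]$, the size constraint $|J| = k - 2$ translates into $\sum_j u_j \in \left[\sum_j |[p_j, q_j)| - k + 2,\ n - k + 2\right]$, which has a solution with the required component bounds thanks to the hypothesis $k \leq n - 2$ together with $\sum_j |[p_j, q_j)| \leq n$ (disjointness of the intervals). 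Next, form the six sets $J_{ij} := J \cup \{\rt_i, \rt_j\}$ for $\{i, j\} \subseteq \{1, 2, 3, 4\}$. Each $J_{ij}$ is right-aligned by direct inspection, hence belongs to $\Mcal_{\gBB}$ and is weakly separated from every element of $\Ical_{\gBB}$ by \cref{lemma:right_aligned}. Both quintuples $\{J_{12}, J_{14}, J_{23}, J_{34}, J_{13}\}$ and $\{J_{12}, J_{14}, J_{23}, J_{34}, J_{24}\}$ are pairwise weakly separated (any two of their elements differ by a single-element swap within $\{\rt_1, \rt_2, \rt_3, \rt_4\}$), while $J_{13}$ and $J_{24}$ interleave cyclically and so are not weakly separated---this is precisely the Pl\"ucker exchange configuration of a square face.

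By \cite[Theorem~1.5]{OPS}, the collection $\Ical_{\gBB} \cup \{J_{12}, J_{13}, J_{14}, J_{23}, J_{34}\}$ extends to a maximal weakly separated collection $\mathcal{C}$ which is the face-label set of some reduced contracted graph $G \in \Gred(\gBB)$. In $G$, the four sets $J_{12}, J_{14}, J_{23}, J_{34}$ differ from $J_{13}$ by single swaps and therefore label four distinct faces adjacent to the face labeled $J_{13}$, across edges whose labels are the corresponding symmetric differences $\{\rt_1, \rt_2\}, \{\rt_3, \rt_4\}, \{\rt_2, \rt_3\}, \{\rt_1, \rt_4\}$. The main obstacle---which I would address last---is to force the face labeled $J_{13}$ to have \emph{exactly} these four edges (i.e., to be a square face). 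Via the OPS correspondence between mutations of maximal weakly separated collections and square moves on plabic graphs, this reduces to showing $J_{13}$ is mutable in some such $\mathcal{C}$, and mutability follows from the weak separation of $\{J_{12}, J_{14}, J_{23}, J_{34}, J_{24}\}$, which provides $J_{24}$ as the mutation partner of $J_{13}$: concretely, one chooses $\mathcal{C}$ to be a maximal weakly separated extension of $\Ical_{\gBB} \cup \{J_{12}, J_{14}, J_{23}, J_{34}\}$ among $k$-subsets weakly separated from both $J_{13}$ and $J_{24}$, and then adjoins $J_{13}$.
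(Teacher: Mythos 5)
Your construction is the same as the paper's: the six right-aligned $k$-sets $J_{ij} = J \cup\{\rt_i,\rt_j\}$ are exactly the paper's $I\setminus\{\rt_a,\rt_b\}$ with $I = J\cup\{\rt_1,\rt_2,\rt_3,\rt_4\}$ (where $\{a,b\}$ is the complement of $\{i,j\}$ in $\{1,2,3,4\}$), and both routes go through \cref{lemma:right_aligned}. A minor inaccuracy: $J_{12}$ and $J_{34}$ do \emph{not} differ by a single-element swap, though they are still weakly separated since $\{\rt_1,\rt_2\}$ and $\{\rt_3,\rt_4\}$ do not interleave.

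The genuine gap is in the final step. You set $\mathcal{C}:=\mathcal{C}'\cup\{J_{13}\}$ where $\mathcal{C}'$ is a maximal weakly separated collection among $k$-sets weakly separated from both $J_{13}$ and $J_{24}$, and assert that this makes $J_{13}$ mutable. But nothing you say shows that $\mathcal{C}'\cup\{J_{13}\}$ is a \emph{maximal} weakly separated collection: a priori some $L$ could be weakly separated from all of $\mathcal{C}'\cup\{J_{13}\}$ yet fail to be weakly separated from $J_{24}$, in which case the face labeled $J_{13}$ in the corresponding plabic graph could have more than four sides, and $J_{24}$ would not be its mutation partner. Ruling out such an $L$ is exactly the content of the mutation/swap compatibility for weakly separated collections --- if a weakly separated collection contains $J_{12},J_{14},J_{23},J_{34}$ and $J_{13}$, then replacing $J_{13}$ by $J_{24}$ preserves weak separation --- and this is precisely the role of \cite[Proposition~3.2]{OPS} in the paper's one-line conclusion. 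Once you have that, the bespoke construction of $\mathcal{C}'$ is unnecessary: take \emph{any} maximal weakly separated collection $\mathcal{C}\supseteq\Ical_{\gBB}\cup\{J_{12},J_{13},J_{14},J_{23},J_{34}\}$ (e.g.\ via \cite[Theorem~1.5]{OPS}); by the swap lemma $(\mathcal{C}\setminus\{J_{13}\})\cup\{J_{24}\}$ is weakly separated, hence maximal by purity \cite[Theorem~1.3]{OPS}, so $J_{13}$ is mutable in $\mathcal{C}$ and labels a square face with the four prescribed neighbors.
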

\begin{proof}
Consider all right-aligned subsets of $[n]$ whose intersection with $[p_j,q_j)$ is nonempty for each $j=1,2,3,4$. Clearly, such subsets can have any size between $4$ and $n$. Let $I$ be such a set of size $k+2$, and for $j=1,2,3,4$, let $I\cap [p_j,q_j)=[\rt_j,q_j)$, where $\rt_j\in[p_j,q_j)$. The sets $I\setminus \{\rt_i,\rt_j\}$ for $1\leq i<j\leq 4$ are all right-aligned. Thus they belong to $\Mcal_{\gBB}$ and are weakly separated from all elements of $\Ical_{\gBB}$ by \cref{lemma:right_aligned}. The result follows by combining~\cite[Proposition~3.2]{OPS} with~\cite[Theorem~1.3]{OPS}.
\end{proof}

\subsection{Injectivity}\label{sec:inj}
Our final goal is to show that the map $\mapDC:\Delta_{2,n}\to\Ctnn_{k,n}$ constructed in \cref{thm:Measf:Cyc_to_Ctnn} is injective, which is closely related to the \emph{injectivity conjecture} for critical cells; see~\cite[Conjecture~4.3]{crit}. It was proved for $\Ctp_{k,n}$ in~\cite[Theorem~4.4]{crit}. What we need is an extension of that result to the closure $\Ctnn_{k,n}$ of $\Ctp_{k,n}$ which turns out to be more subtle.
\begin{theorem}
  The map $\mapDC:\Delta_{2,n}\to\Ctnn_{k,n}$ is a homeomorphism.
\end{theorem}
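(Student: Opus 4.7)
The plan is to prove injectivity of $\mapDC$; since $\mapDC$ is already continuous and surjective by \cref{thm:map_factors}, and since $\Delta_{2,n}$ is compact while $\Ctnn_{k,n}\subseteq\Gr(k,n)$ is Hausdorff, injectivity automatically upgrades $\mapDC$ to a homeomorphism (any continuous bijection from a compact space to a Hausdorff space is closed). The stratification-preservation will then follow once I check that both stratifications (faces of $\Delta_{2,n}$ on the source, and images under $\Measbkn$ of open faces of $\Cyc(\Pakn)$ on the target) are indexed by the cyclic interval partition $\BBmod$ attached to $\by$, or equivalently by the positroid cell containing $\mapDC(\by)$.

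Assume $X:=\mapDC(\by)=\mapDC(\by')$ with $\by,\by'\in\Delta_{2,n}$. The first step is to recover $\BBmod(\by)$, which I define by declaring $p-1,p\in[n]$ to lie in the same cyclic block iff $y_p=0$. Because $X\in\Grtnn(k,n)$ belongs to a unique positroid cell, and \cref{prop:graph_reduction} identifies this cell as $\Povtp_{\gBB}$ where $\BB$ is the $n$-periodic lift of $\BBmod(\by)$, and because the map $\BB\mapsto\gBB$ of \cref{lemma:gBB} is injective (the blocks of $\BB-k$ appear as the maximal descent runs of $\gBB$), we recover $\BBmod(\by)$ from $X$ alone. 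Consequently $\by$ and $\by'$ share the same zero pattern.

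The second step recovers the non-zero coordinates. Fix a reduced graph $G''\in\Gred(\gBB)$; by the standard fact that the boundary measurement is injective on positive edge weights modulo gauge, the weight function $\wt''$ on $G''$ representing $X$ is determined up to gauge transformations at interior vertices. By \cref{dfn:Measbf} and \cref{rmk:types}, these weights encode the pairwise distances $|u_i-u_j|$ between the top-level points $u_1,\ldots,u_m$ of the $m:=|\BBmod(\by)|$ blocks, and by \cref{lemma:polygon} the tuple $(u_1,\ldots,u_m)$ forms an inscribed (possibly degenerate) $m$-gon whose side lengths are exactly the non-zero entries of $\by$. Injectivity of $\mapDC$ therefore reduces to recovering this $m$-gon, up to similarity, from the gauge-equivalence class of $\wt''$ on $G''$.

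This final reconstruction is the main obstacle, as it amounts to an injectivity statement for the critical boundary measurement of $\gBB$, a case of the injectivity conjecture~\cite[Conjecture~4.3]{crit} that is open in general. I would establish it for the specific family of permutations $\gBB$ by leveraging the weak-separation machinery of \cref{sec:weak_sep}: \cref{cor:touch_arch} supplies $I_j$-arches whose local edge weights recover three-point distance ratios of the $m$-gon, while \cref{cor:square_face} supplies square faces whose Pl\"ucker cross-ratios recover four-point cross-ratios of four vertices of the $m$-gon from four distinct blocks. A sufficient collection of such invariants pins down $(u_1,\ldots,u_m)$ up to projective transformation, and the inscribed condition together with the normalization $\sum y_p=2$ then determines $\by$ uniquely. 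The most delicate point is checking that the arches and square faces persist after the graph modifications of \cref{prop:graph_reduction} in the special-interval case of \cref{lemma:type1}, but \cref{sec:weak_sep} is tailored to the permutations $\gBB$ appearing here, so the combinatorial inputs needed are indeed available.
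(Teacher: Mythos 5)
Your overall strategy matches the paper's: reduce to injectivity via compact-to-Hausdorff, recover the cyclic block structure $\BBmod_\bth$ from the positroid cell containing $\Measbkn(\bth)$, and then try to reconstruct the side-length ratios of the inscribed $m$-gon from gauge-invariant data on $G''$, using $I_j$-arches (\cref{cor:touch_arch}) and interior square faces (\cref{cor:square_face}). However, two things are missing.

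First, you cannot directly invoke the weak-separation results of \cref{sec:weak_sep} for an arbitrary $\BB_\bth$: those lemmas are stated and proved only for \emph{generic} interval partitions (\cref{dfn:BB_generic}), whereas $\gbth$ may in general have coloops and indices with $\gbth(j)=j+1$. The paper first strips these off via the factorization procedure of~\cite[Section~4.4]{crit} to reduce to the generic case before applying any of \cref{sec:weak_sep}. Your remark that the machinery is ``tailored to the permutations $\gBB$ appearing here'' papers over this; a reduction step is genuinely required.

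Second, and more seriously, the sentence ``a sufficient collection of such invariants pins down $(u_1,\ldots,u_m)$'' is precisely where the real work lies, and you leave it unargued. Knowing that each arch gives some three-point ratio $\dist(s,t):\dist(r,t)$ and each square gives some four-point cross-ratio does not by itself guarantee that these recoverable quantities determine all pairwise distance ratios: one must check which arches and squares actually exist for the given $\gBB$ and then show that the resulting system is rich enough. The paper handles this by introducing a directed graph $D$ on $[m]$ (with an edge $s\to r$ whenever $\gbbth(j-1)\in\Bmod_r$ for some $j\in\Bmod_s$), establishing structural facts about $D$ (each vertex has an outgoing arrow to some $r\neq s,s-1$; and each arrow $s\to r$ is accompanied by one of $s\to r+1$, $s+1\to r$, $s+1\to r+1$), and then, via a case analysis combining~\eqref{eq:Bmod_arch} and~\eqref{eq:Bmod_square3}, shows that $\dist(s,s-1):\dist(s-1,s-2)$ is recoverable for every $s$, which by \cref{lemma:polygon} suffices. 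Without some such combinatorial argument establishing sufficiency, your proof is incomplete at exactly the point it needs to be concrete.
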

\begin{proof}
Since $\Delta_{2,n}$ is compact, $\Ctnn_{k,n}$ is Hausdorff, and $\mapDC$ is a continuous surjection, it remains to show that $\mapDC$ is an injection. Thus for $\bth\in\Cyc(\Pakn)$, our goal is to show that the point $\mapCD(\bth)\in\Delta_{2,n}$ can be uniquely reconstructed from $\Measbkn(\bth)\in\Ctnn_{k,n}$. Let $\BB_\bth$, $\gbth$, $G''\in\Gred(\gbth)$, and $\wt''_\bth$ be as in \cref{sec:DC} and \cref{prop:graph_reduction}. 

First, observe that $\BB_\bth$ need not be generic in the sense of \cref{dfn:BB_generic} since $\gbth$ may have some coloops and some indices $j\in\Z$ satisfying $\gbth(j)=j+1$. The corresponding strands form isolated connected components of the reduced strand diagram of $\gbth$. We remove these components using the \emph{factorization} procedure from~\cite[Section~4.4]{crit}. Thus the problem reduces to the case where $\BB_\bth$ is generic, which allows us to apply the results of \cref{sec:weak_sep}.

Let $\BBmod=(\Bmod_1,\Bmod_2,\dots,\Bmod_m)$. The point $\mapCD(\bth)$ records the side length ratios of a (possibly degenerate) inscribed $m$-gon $\Rbth$. For $p,q\in[m]$, let $\dist(p,q)$ denote the distance between the corresponding vertices of $\Rbth$. The ratio of any two such distances can be computed from $\mapCD(\bth)$; see \cref{rmk:polygons_k_n}. Recall from~\eqref{eq:weights} that the edge weights of the graph $G''$ are proportional to the distances between the vertices of $\Rbth$. More precisely, if an interior (that is, not incident to a boundary vertex) edge $e$ of $G''$ is labeled by $\{s,t\}$ then $s\in\Bmod_p$ and $t\in\Bmod_q$ belong to different cyclic intervals in $\BBmod_\bth$, and the weight $\wt''_\bth(e)$ is proportional (compared to the other edges sharing a black vertex with $e$) to $\dist(p,q)$. 

As explained in \cite[Section~9]{crit}, for any face $F$ of $G''$, the alternating ratio of the edge weights that appear on the boundary of $F$ may be reconstructed from $\Measbkn(\bth)$ using the \emph{left twist} of Muller--Speyer; see~\cite[Corollary~5.11]{MuSp}. We will be interested in two kinds of faces of $G''$: $I_j$-arches as in \cref{dfn:arch} and interior square faces as in \cref{cor:square_face}.

Let $s,t\in[m]$ and $j\in\Bmod_s$ be such that $I_j\cap \Bmod_t=\emptyset$. By \cref{cor:touch_arch}, some element in $\Bmod_t$ touches an $I_j$-arch. Then for $r\in[m]$ such that $\gbbth(j-1)\in\Bmod_r$, we find that the ratio
\begin{equation}\label{eq:Bmod_arch}
  \frac{\dist(s,t)}{\dist(r,t)}
\end{equation}
may be recovered from $\Measbkn(\bth)$. %

Similarly, assume $k\leq n-2$ and let $\Bmod_p,\Bmod_q,\Bmod_t,\Bmod_s\in\BBmod$ be four disjoint intervals listed in clockwise order. Then by \cref{cor:square_face}, the \emph{cross-ratio}
\begin{equation}\label{eq:Bmod_square}
  \frac{\dist(p,q)\cdot \dist(t,s)}{\dist(q,t)\cdot \dist(p,s)}
\end{equation}
may be recovered from $\Measbkn(\bth)$. 
 In fact, since the four corresponding vertices of $\Rbth$ lie on a circle or on a line, the ratios
\begin{equation}\label{eq:Bmod_square3}
  (\dist(p,q)\cdot \dist(t,s)) :   (\dist(p,t)\cdot \dist(q,s)) :   (\dist(p,s)\cdot \dist(q,t))
\end{equation}
can all be recovered from $\Measbkn(\bth)$ using standard relations for cross-ratios.

Recall that we have $\BBmod=(\Bmod_1,\Bmod_2,\dots,\Bmod_m)$. Consider a directed graph $D$ on $[m]$ with edges $s\to r$ whenever there exists $j\in\Bmod_s$ such that $\gbbth(j-1)\in\Bmod_r$. Thus the ratio in~\eqref{eq:Bmod_arch} may be recovered from $\Measbkn(\bth)$ for all $t\in[r+1,s-1]$. Clearly, each vertex of $D$ has at least one outgoing arrow. Moreover, since $\BBmod$ is generic, we see that each vertex $s$ of $D$ has an outgoing arrow $s\to r$ for $r\neq s,s-1$ (modulo $m$). Finally, by comparing $\gbbth(j)$ to $\gbbth(j+1)$, we see that if $D$ has an arrow $s\to r$ then $D$ also has at least one of the following arrows: $s\to r+1$, $s+1\to r$, $s+1\to r+1$.

By \cref{lemma:polygon}, it suffices to recover the ratio $\dist(s,{s-1}):\dist({s-1},{s-2})$ from $\Measbkn(\bth)$ for each $s\in[m]$. This task is trivial when $k=n-1$, thus let us assume that $k\leq n-2$.  As shown above, there exists $r\neq s,s-1$ such that $D$ contains an arrow $s\to r$. If $r=s-2$ then we are done, thus assume $r\neq s,s-1,s-2$ and let $t\in [r+2,s-1]$. We know that $D$ contains another arrow $s'\to r'$ for $s'\in\{s,s+1\}$, $r'\in\{r,r+1\}$.   
 From~\eqref{eq:Bmod_arch}, we recover the ratios
\begin{equation}\label{eq:rat1}
  \dist(s,t):\dist(r,t),\quad \dist(s,r'):\dist(r,r'),\quad \dist(s',t):\dist(r',t),\quad \dist(s',s):\dist(r',s),
\end{equation}
some of which may coincide or be equal to $1$ if $s=s'$ or $r=r'$.

Suppose first that $s'=s+1$ and $r'=r+1$. Using~\eqref{eq:Bmod_square3}, we recover the ratios
\begin{equation}\label{eq:rat_square1}
  \begin{aligned}
  &(\dist(s,s+1)\cdot \dist(r,r+1)) :   (\dist(s,r)\cdot \dist(s+1,r+1)) :   (\dist(s,r+1)\cdot \dist(s+1,r)),\\
  &(\dist(s+1,r)\cdot \dist(r+1,t)) :   (\dist(s+1,r+1)\cdot \dist(r,t)) :   (\dist(s+1,t)\cdot \dist(r,r+1)).
\end{aligned}
\end{equation}
Combining~\eqref{eq:rat1} with~\eqref{eq:rat_square1}, we recover 
\begin{equation*}%
  \dist(s,s+1):\dist(s+1,r):\dist(r,r+1):\dist(s,r+1).
\end{equation*}
By \cref{lemma:polygon}, we recover the (possibly degenerate) inscribed quadrilateral with vertices $s,s+1,r,r+1$. 
The cases $s'=s$, $r'=r+1$ and $s=s+1$, $r'=r$ are handled similarly. In the former case, we recover the inscribed triangle with vertices $s,r,r+1$, and in the latter case, we recover the inscribed triangle with vertices $s,s+1,r$. (When we say ``we recover a polygon'' we mean that the ratio of any two of its side lengths may be recovered from $\Measbkn(\bth)$.) Thus we have recovered a possibly degenerate inscribed polygon $R$ whose vertex set $\Vert(R)$ contains $s$ and $r$. By~\eqref{eq:Bmod_arch}, for each $t'\in[r+1,s-1]$, we recover the ratio $\dist(s,t):\dist(r,t)$, and thus the possibly degenerate inscribed polygon with vertex set $\Vert(R)\cup [r,s]$ is recovered. In particular, the ratio $\dist(s,{s-1}):\dist({s-1},{s-2})$ is recovered.
\end{proof}
 
\bibliographystyle{alpha} 
\bibliography{crit_polyt}

\end{document}